\newcommand{\version}{Ver.~0.0}
\newcommand{\setversion}[1]{\renewcommand{\version}{Ver.~{#1}}}
\title [Real Double flag varieties]
{Real double flag varieties for the symplectic group}
\dedicatory{}
\author{Kyo Nishiyama}
\address{
Department of Physics and Mathematics\\
Aoyama Gakuin University\\
Fuchinobe 5-10-1, Sagamihara 252-5258, Japan}
\email{kyo@gem.aoyama.ac.jp}
\thanks{K.~N.\  is supported by JSPS KAKENHI Grant Numbers \#{16K05070}.}
\author{Bent {\O}rsted}
\address{
Department of Mathematics, Aarhus University,
Ny Munkegade, 8000 Aarhus C, Denmark
}
\email{orsted@math.au.dk}
\thanks{}
\date{\version\quad(compiled on \today)}
\subjclass[2010]{Primary 22E46; Secondary 14M15, 11S90, 22E45, 47G10}
\keywords{double flag variety, Hermitian symmetric space, prehomogeneous vector space, 
degenerate principal series representation, integral kernel operator}
\theoremstyle{plain}
\newtheorem{theorem}{Theorem}
\newtheorem{corollary}[theorem]{Corollary}
\newtheorem{lemma}[theorem]{Lemma}
\theoremstyle{definition}
\newtheorem{goal}[theorem]{\upshape Goal and Main Results}
\theoremstyle{remark}
\newtheorem{remark}[theorem]{\upshape Remark}
\numberwithin{equation}{section}
\numberwithin{theorem}{section}
\newcommand{\Z}{\mathbb{Z}}
\newcommand{\R}{\mathbb{R}}
\newcommand{\C}{\mathbb{C}}
\newcommand{\calP}{\mathcal{P}}
\newcommand{\lie}[1]{\mathfrak{#1}}
\newcommand{\Lie}{\mathop\mathrm{Lie}\nolimits{}}
\newcounter{thmenum}
\newenvironment{thmenumerate}{%
\begin{list}{$(\thethmenum)$}{%
\usecounter{thmenum}
\setlength{\labelsep}{.5em}
\setlength{\labelwidth}{-7pt}
\setlength{\topsep}{0pt}
\setlength{\partopsep}{0pt}
\setlength{\parsep}{0pt}
\setlength{\leftmargin}{3pt}
\setlength{\rightmargin}{0pt}
\setlength{\itemindent}{\leftmargin}
\setlength{\itemsep}{0pt}
}}
{\end{list}}
\newenvironment{mynote}{\par\medskip\noindent\begin{math} \blacktriangleright \end{math}{\bfseries [Memorandom]}\ }
{\begin{math} \blacktriangleleft \end{math}}
\newcommand{\vectwo}[2]{{\renewcommand{\arraystretch}{.85}\Bigl(\begin{array}{@{\,}c@{\,}}{#1}\\ {#2}\end{array}\Bigr)}}
\newcommand{\mattwo}[4]{\Bigl(\begin{array}{@{\,}c@{\;\;}c@{\,}}{#1} & {#2} \\ {#3} & {#4} \end{array}\Bigr)}
\newcommand{\mattwovbar}[4]{\Bigl(\begin{array}{@{\,}c|c@{\,}}{#1} & {#2} \\ \hline {#3} & {#4} \end{array}\Bigr)}
\newcommand{\arraytwo}[4]{\begin{array}{@{\,}c@{\;\;}c@{\,}}{#1} & {#2} \\ {#3} & {#4} \end{array}}
\newcommand{\eb}{\boldsymbol{e}}
\newlength{\lengthcup}
\newcommand{\diag}{\qopname\relax o{diag}}
\newcommand{\rank}{\qopname\relax o{rank}}
\newcommand{\trace}{\qopname\relax o{trace}}
\newcommand{\Stab}{\qopname\relax o{Stab}}
\newcommand{\Ind}{\qopname\relax o{Ind}}
\newcommand{\CinfInd}{C^{\infty}\!\!\text{-}\Ind}
\newcommand{\Gal}{\qopname\relax o{Gal}}
\newcommand{\Ad}{\qopname\relax o{Ad}}
\newcommand{\sgn}{\qopname\relax o{sgn}}
\newcommand{\sign}{\qopname\relax o{sign}}
\renewcommand{\Re}{\qopname\relax o{Re}}
\newcommand{\closure}[1]{\overline{#1}}
\newcommand{\transpose}[1]{\,{}^t{#1}\,}
\newcommand{\conjugate}[1]{\overline{\rule{0pt}{1.2ex} #1}}
\newcommand{\restrict}{\big|}
\newcommand{\partition}{\mathcal{P}}
\newcommand{\Sym}{\mathop{\mathrm{Sym}}\nolimits}
\newcommand{\calorbit}{\mathcal{O}}
\newcommand{\GL}{\mathrm{GL}}
\newcommand{\SL}{\mathrm{SL}}
\newcommand{\OO}{\mathrm{O}}
\newcommand{\U}{\mathrm{U}}
\newcommand{\Sp}{\mathrm{Sp}}
\newcommand{\Mat}{\mathrm{M}}
\newcommand{\regMat}{\mathrm{M}^{\circ}}
\newcommand{\gl}{\lie{gl}}
\newcommand{\Grass}{\qopname\relax o{Gr}}
\newcommand{\LGrass}{\qopname\relax o{LGr}}
\newcommand{\miniP}{P_{\mathrm{min}}}
\newcommand{\minip}{\lie{p}_{\mathrm{min}}}
\newcommand{\psg}[1][ ]{parabolic subgroup{#1}}
\newcommand{\spanr}[1]{\mathrm{span}_{\R}\{ {#1} \}}
\newcommand{\spanc}[1]{\mathrm{span}_{\C}\{ {#1} \}}
\newcommand{\wdot}{\mathbin{\overset{w}{\cdot}}}
\newcommand{\qformQ}[1]{\mathscr{Q}_{#1}}
\newcommand{\Pol}{\mathrm{Pol}}
\newcommand{\chiPS}{\chi_{P_S}}
\newcommand{\chiQ}{\chi_{Q}}
\newcommand{\lfa}[2]{{#1}\boldsymbol{.}{#2}}
\newcommand{\IPtoQ}{\calP}
\newcommand{\IQtoP}{\mathcal{Q}}
\newcommand{\GC}{G_{\C}}
\newcommand{\KC}{K_{\C}}
\newcommand{\LC}{L_{\C}}
\newcommand{\QC}{Q_{\C}}
\newcommand{\PSC}{P_{S,\C}}
\newcommand{\BLC}{B_{L,\C}}
\newcommand{\GR}{G_{\R}}
\newcommand{\XC}{X_{\C}}
\newcommand{\wfrac}[2]{\dfrac{\,{#1}\,}{\,{#2}\,}}
\newcommand{\wsqrt}[1]{\sqrt{{#1}\,}}
\newcommand{\norm}[1]{\|{#1}\|}
\newcommand{\HilbGnu}{\mathcal{H}_{\nu}^G}
\newcommand{\HilbLmu}{\mathcal{H}_{\mu}^L}
\newcommand{\normGnu}[1]{\norm{#1}_{G, \nu}}
\newcommand{\normLmu}[1]{\norm{#1}_{L, \mu}}
\newcommand{\Stiefelnd}{S_{n,d}}
\begin{document}

\begin{abstract} 
 In this paper we study a key example of a Hermitian
symmetric space and a natural associated double flag variety,
namely for the real symplectic group $G$ and the symmetric
subgroup $L$, the Levi part of the Siegel parabolic $P_S$. We
give a detailed treatment of the case of the maximal parabolic
subgroups $Q$ of $L$ corresponding to Grassmannians and the product variety of $G/P_S$ and $L/Q$; in particular we classify
the $L$-orbits here, and find natural explicit integral transforms
between degenerate principal series of $L$ and $G$.
\end{abstract}

\maketitle

\section*{Introduction}

The geometry of flag varieties over the complex
numbers, and in particular double flag varieties, have been
much studied in recent years
(see, e.g., \cite{Fresse.Nishiyama.2016}, \cite{Henderson.Trapa.2012}, \cite{Travkin.2009}, \cite{FGT.2009} etc.). 
In this paper we focus on a
particular case of a \emph{real} double flag variety with the purpose
of understanding in detail (1) the orbit structure under the
natural action of the smaller reductive group (2) the construction
of natural integral transforms between degenerate principal
series representations, equivariant for the same group. Even
though aspects of (1) are known from general theory 
(e.g., \cite{Kobayashi.Matsuki.2014}, \cite{Kobayashi.T.Oshima.2013} and references therein), the
cases we treat here provide new and explicit information; and for 
(2) we also find new phenomena, using the theory of
prehomogeneous vector spaces and relative invariants. In
particular the Hermitian case we study has properties complementary to other well-known cases of (2).  
For this, we refer the readers to \cite{Kobayashi.Speh.2015}, 
\cite{Moellers.Orsted.Oshima.2016}, \cite{Kobayashi.Orsted.Pevzner.2011}, \cite{CKOP.2011}, \cite{Genkai.2009}, \cite{Said.Koufany.Genkai.2014} 
among others.

Thus in this paper we study a key example of a Hermitian
symmetric space and a natural associated double flag variety,
namely for the real symplectic group $G$ and the symmetric
subgroup $L$, the Levi part of the Siegel parabolic $P_S$. We
give a detailed treatment of the case of the maximal parabolic
subgroup $Q$ of $L$ corresponding to Grassmannians and the product variety of $G/P_S$ and $L/Q$; in particular we classify
the open $L$-orbits here, and find natural explicit integral transforms
between degenerate principal series of $L$ and $G$. We realize
these representations in their natural Hilbert spaces and determine
when the integral transforms are bounded operators. As an
application we also obtain information about the occurrence of
finite-dimensional representations of $L$ in both of these
generalized principal series representations of $G$ resp. $L$.
It follows from general principles, that our integral transforms,
depending on two complex parameters in certain half-spaces, may be meromorphically continued to the whole parameter space;
and that the residues will provide kernel operators (of Schwartz
kernel type, possibly even differential operators), also
intertwining (i.e., $L$-equivariant). 
For general background on integral
operators depending meromorphically on parameters, and
for equivariant integral operators -- introduced by T.~Kobayashi
as symmetry-breaking operators -- as we study here, see 
\cite{Kobayashi.Speh.2015}, \cite{Moellers.Orsted.Oshima.2016} and \cite{Kashiwara.Kawai.1979}.  
However, 
we shall not
pursue this aspect here, and it is our future subject.

It will be clear, that the structure of our example is such that
other Hermitian groups, in particular of tube type, will be
amenable to a similar analysis; thus we contend ourselves
here to give all details for the symplectic group only.

\bigskip

Let us fix notations and explain the content of this paper more explicitly.  
So let $ G = \Sp_{2n}(\R) $ be a real symplectic group.  
We denote a symplectic vector space of dimension $ 2 n $ by $ V = \R^{2n} $ 
with a natural symplectic form defined by 
$ \langle u, v \rangle = \transpose{u} J_n v $, where 
$ J_n 
= \begin{pmatrix}
0 & - 1_n \\
1_n & 0
\end{pmatrix} $.  
Thus, our $ G $ is identified with $ \Sp(V) $.  
Let $ V^+ = \spanr{ e_1, e_2, \dots, e_n } $ spanned by the first $ n $ fundamental basis vectors, 
which is a Lagrangian subspace of $ V $.  
Similarly, we put 
$ V^- = \spanr{ e_n, e_{n + 1}, \dots , e_{2 n} } $, a complementary Lagrangian subspace to $ V^+ $,  and 
we have a complete polarization $ V = V^+ \oplus V^- $.  
The Lagrangians $ V^+ $ and $ V^- $ are dual to each other by the symplectic form, 
so that we can and often do identify 
$ V^- = (V^+)^{\ast} $.  

Let $ P_S = \Stab_G(V^+) = \{ g \in G \mid g V^+ = V^+ \} $ 
be the stabilizer of the Lagrangian subspace $ V^+ $.  
Then $ P_S $ is a maximal \psg of $ G $ with Levi decomposition $ P_S = L \ltimes N $, 
where $ L = \Stab_G(V^+) \cap \Stab_G(V^-) $, the stabilizer of the polarization, 
and $ N $ is the unipotent radical of $ P_S $.  
We call $ P_S $ a Siegel \psg[]. 
Since $ G = \Sp(V) $ acts on Lagrangian subspaces transitively, 
$ \Lambda := G/P_S $ is the collection of all Lagrangian subspaces in $ V $.  
We call this space a Lagrangian flag variety and also denote it by $ \LGrass(\R^{2n}) $.

The Levi subgroup $ L $ of $ P_S $ is explicitly given by 
\begin{equation*}
L = 
\Bigl\{ 
\begin{pmatrix}
a & 0 \\
0 & \transpose{a}^{-1} 
\end{pmatrix} \Bigm| a \in \GL_n(\R) \Bigr\} \simeq \GL_n(\R) ,
\end{equation*}
and we consider it to be $ \GL(V^+) $ 
which acts on $ V^- = (V^+)^{\ast} $ in the contragredient manner.  
The unipotent radical $ N $ of $ P_S $ is realized in the matrix form as 
\begin{equation*}
N = 
\Bigl\{ 
\begin{pmatrix}
1 & z \\
0 & 1
\end{pmatrix} \Bigm| z \in \Sym_n(\R) \Bigr\} \simeq \Sym_n(\R) 
\end{equation*}
via the exponential map.
Note that 
$ \begin{pmatrix}
a & b \\
0 & \transpose{a}^{-1}
\end{pmatrix} \in P_S $ if and only if 
$ a \transpose{b} \in \Sym_n(\R) $, which in turn equivalent to $ a^{-1} b \in \Sym_n(\R) $.

Take a maximal \psg $ Q $ in $ L = \GL(V^+) $ which stabilizes 
$ d $-dimensional isotropic space $ U \subset V^+ $.  
Then $ \Xi_d := L/Q = \Grass_d(V^+) = \Grass_d(\R^n) $ is the Grassmannian of $ d $-dimensional spaces.  
Note that, in the standard realization, 
\begin{equation*}
Q = P_{(d, n - d)}^{\GL} = 
\Bigl\{ 
\begin{pmatrix}
\alpha & \xi \\
0 & \beta
\end{pmatrix} \Bigm| 
\alpha \in \GL_d(\R), \beta \in \GL_{n -d}(\R), \xi \in \Mat_{d, n -d}(\R)
\Bigr\} .
\end{equation*}

Now, our main concern is a double flag variety 
$ X = \Lambda \times \Xi_d = G/P_S \times L/Q $ on which 
$ L = \GL_n(\R) $ acts diagonally.  
We are strongly interested in the orbit structure of $ X $ under the action of $ L $ and 
its applications to representation theory.  

\begin{goal}
We will consider the following problems.  
\begin{thmenumerate}
\item
To prove there are finitely many $ L $-orbits on the double flag variety 
$ X = \Lambda \times \Xi_d $.  
We will give a complete classification of open orbits, and recursive strategy to determine 
the whole structure of $ L $-orbits on $ X $.  
See Theorems~\ref{theorem:finiteness-L-orbits}  and  \ref{theorem:classification-open-orbits-dfv}.
\item
To construct relative invariants on each open orbits.  
We will use them to define integral transforms between degenerate principal series representations 
of $ L $ and that of $ G $.  
For this, see \S~\ref{section:intertwiners}, especially Theorems~\ref{thm:conv-integral-operator-P} and \ref{theorem:intertwiner-Q-to-P}.
\end{thmenumerate}
\end{goal}

Here we will make a short remark on the double flag varieties over the \emph{complex} number field 
(or, more correctly, over an algebraically closed fields of characteristic zero).  

Let us complexify everything which appears in the setting above, so that 
$ \GC = \Sp_{2n}(\C) $ and $ \LC \simeq \GL_n(\C) $.  
The complexifications of the parabolics are 
$ \PSC $, the stabilizer of a Lagrangian subspace in the symplectic vector space $ \C^{2n} $, 
and 
$ \QC $, the stabilizer of a $ d $-dimensional vector space in $ \C^n $.  
Then it is known that 
the double flag variety 
$ \XC = \GC/\PSC \times \LC/ \QC $ 
has finitely many $ \LC $-orbits or 
$ \# \QC \backslash \GC / \PSC < \infty $.  
In this case, one can replace the maximal parabolic $ \QC $ by a Borel subgroup $ \BLC $ of $ \LC $, 
and still there are finitely many $ \LC $ orbits in 
$ \GC/\PSC \times \LC/\BLC $ 
(see \cite{NO.2011} and \cite[Table~2]{HNOO.2013}).

Even if there are only finitely many orbits of a complex algebraic group, say $ \LC $, acting on a smooth algebraic variety, 
there is no guarantee for finiteness of orbits of real forms in general  
\footnote{
%%We should check this.  
It is known that there is a canonical bijection $ L(\R)\backslash (L/H)(\R) = \ker(H^1(C; H) \to H^1(C; L)) $, where 
$ C = \Gal(\C/\R) $ and $ H^1(C; H) $ denotes the first Galois cohomology group.
See \cite[Eq.~(II.5.6)]{Borel.Ji.2006}.}.
So our problem over reals seems impossible to be deduced from 
the results over $ \C $.

On the other hand, 
in the case of the complex full flag varieties, 
there exists a famous bijection between $ \KC $ orbits and 
$ \GR $ orbits called Matsuki correspondence \cite{Matsuki.1988}.  
Both orbits are finite in number.  
In the case of double flag varieties, there is no such known correspondences.  
It might be interesting to pursue such correspondences. 

Toshiyuki Kobayashi informed us that 
the finiteness of orbits $ \# X/ L < \infty $ also follows from 
general results on visible actions \cite{Kobayashi.2005}.  
We thank him for his kind notice.

\bigskip

\noindent
\textbf{Acknowledgement.}
K.~N.\ 
thanks Arhus University for its warm hospitality 
during the visits in August 2015 and 2016.  
Most of this work has been done in those periods.

\section{Elementary properties of $ G = \Sp_{2n}(\R) $}

In this section, we will give very well known basic facts on the symplectic group 
for the sake of fixing notations.  
We define 
\begin{equation*}
G = \Sp_{2n}(\R) 
= \{ g \in \GL_{2n}(\R) \mid \transpose{g} J_n g = J_n \} 
\quad
\text{ where } 
J_n = \mattwo{0}{-1_n}{1_n}{0} .
\end{equation*}
The following lemmas are quite elementary and well known.  
We just present them because of fixing notations.

\begin{lemma}\label{lemma:g-belongs-to-Sp}
If we write 
$ g = \mattwo{a}{b}{c}{d} \in \GL_{2n}(\R) $, 
then $ g $ belongs to $ G $ if and only if 
$ \transpose{a} c , \; \transpose{b} d \in \Sym_n(\R) $ and 
$ \transpose{a} d - \transpose{c} b = 1 $.
\end{lemma}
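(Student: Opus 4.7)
The plan is a direct computation of the condition $\transpose{g} J_n g = J_n$ in block form. Writing $g = \mattwo{a}{b}{c}{d}$, I would first record that
\[
\transpose{g} = \mattwo{\transpose{a}}{\transpose{c}}{\transpose{b}}{\transpose{d}},
\qquad J_n g = \mattwo{-c}{-d}{a}{b},
\]
and then multiply out the $2\times 2$ block product $\transpose{g} J_n g$.

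The result is the block matrix
\[
\transpose{g} J_n g = \mattwo{\transpose{c}a - \transpose{a}c}{\transpose{c}b - \transpose{a}d}{\transpose{d}a - \transpose{b}c}{\transpose{d}b - \transpose{b}d}.
\]
Comparing with $J_n = \mattwo{0}{-1_n}{1_n}{0}$ block by block gives four equations: the diagonal blocks vanish, which is exactly the symmetry of $\transpose{a}c$ and $\transpose{b}d$, while the off-diagonal blocks yield $\transpose{a}d - \transpose{c}b = 1_n$ together with its transpose $\transpose{d}a - \transpose{b}c = 1_n$. Since the latter is obtained by transposing the former, the four block equations collapse to the three conditions in the statement, establishing both directions of the equivalence.

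There is no real obstacle here; the lemma is a bookkeeping exercise. The only small care needed is noticing the redundancy between the two off-diagonal block equations, so that one does not state a fourth independent condition. For this reason I would keep the proof to just the block computation above and one sentence pointing out the redundancy.
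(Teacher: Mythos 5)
Your computation is exactly the paper's proof: both expand $\transpose{g}J_n g = J_n$ in $n\times n$ blocks and read off the four block equations. The paper leaves the redundancy between the two off-diagonal blocks implicit, whereas you spell it out, but the approach and content are the same.
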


\begin{proof}
We rewrite $ \transpose{g} J g = J $ by coordinates, and get 
\begin{align*}
\transpose{c} a - \transpose{a} c &= 0 
& 
\transpose{c} b - \transpose{a} d &= - 1 
\\
\transpose{d} a - \transpose{b} c &= 1 
&
\transpose{d} b - \transpose{b} d &= 0 
\end{align*}
which shows the lemma.
\end{proof}

\begin{lemma}\label{lemma:inverse-symplectic-matrix}
If we write 
$ g = \mattwo{a}{b}{c}{d} \in G $, 
then 
$ g^{-1} = \mattwo{\transpose{d}}{-\transpose{b}}{-\transpose{c}}{\transpose{a}} $.
\end{lemma}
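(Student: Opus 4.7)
The plan is to exploit the defining equation $\transpose{g} J_n g = J_n$ of the symplectic group directly. Rearranging gives $g^{-1} = J_n^{-1} \transpose{g} J_n$, which reduces the entire problem to a single block-matrix product. I would first record the elementary identity $J_n^{-1} = -J_n = \bigl(\begin{smallmatrix} 0 & 1_n \\ -1_n & 0 \end{smallmatrix}\bigr)$, which follows from $J_n^2 = -1_{2n}$. Then, writing $\transpose{g} = \bigl(\begin{smallmatrix} \transpose{a} & \transpose{c} \\ \transpose{b} & \transpose{d} \end{smallmatrix}\bigr)$, I would multiply out
\[
J_n^{-1}\transpose{g} J_n \;=\; \begin{pmatrix} 0 & 1_n \\ -1_n & 0 \end{pmatrix} \begin{pmatrix} \transpose{a} & \transpose{c} \\ \transpose{b} & \transpose{d} \end{pmatrix} \begin{pmatrix} 0 & -1_n \\ 1_n & 0 \end{pmatrix}
\]
in two steps; the result is $\bigl(\begin{smallmatrix} \transpose{d} & -\transpose{b} \\ -\transpose{c} & \transpose{a} \end{smallmatrix}\bigr)$, matching the claimed formula.

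As a sanity check (and an alternative route that avoids inverting anything symbolically), I would take the proposed matrix as an ansatz and verify directly that its product with $g$ equals $1_{2n}$, using the four scalar relations from Lemma~\ref{lemma:g-belongs-to-Sp}: symmetry of $\transpose{a}c$ and $\transpose{b}d$ kills the off-diagonal blocks, while $\transpose{a}d - \transpose{c}b = 1_n$ and its transpose $\transpose{d}a - \transpose{b}c = 1_n$ produce the diagonal identity blocks. There is no real obstacle here; the identity is a cosmetic consequence of the symplectic defining equation, and the only care required is in bookkeeping the signs introduced by $J_n^{-1} = -J_n$.
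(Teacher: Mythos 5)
Your proposal is correct and follows essentially the same route as the paper: rearranging $\transpose{g}J_ng = J_n$ to $g^{-1} = J_n^{-1}\transpose{g}J_n = -J_n\transpose{g}J_n$ and computing the block product. The extra sanity check via Lemma~\ref{lemma:g-belongs-to-Sp} is a reasonable add-on, but the core argument matches the paper's one-line proof.
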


\begin{proof}
Since $ \transpose{g} J g = J $, 
we get 
$ g^{-1} 
= J^{-1} \transpose{g} J 
= - J \transpose{g} J 
= \mattwo{\transpose{d}}{-\transpose{b}}{-\transpose{c}}{\transpose{a}} $.
\end{proof}

\begin{lemma}\label{lemma:conjugation-of-psg-elements}
If we write 
$ g = \mattwo{a}{b}{c}{d} \in G $  
and 
$ p = \mattwo{x}{z}{0}{y} \in P_S $, 
then 
\begin{equation}
g^{-1} p g = 
\begin{pmatrix}
\transpose{d} x a + \transpose{d} z c - \transpose{b} y c & \transpose{d} x b + \transpose{d} z d - \transpose{b} y d 
\\
- \transpose{c} x a - \transpose{c} z c + \transpose{a} y c & - \transpose{c} x b - \transpose{c} z d + \transpose{a} y d 
\end{pmatrix} .
\end{equation}
Note that, in fact, $ y = \transpose{x}^{-1} $.
\end{lemma}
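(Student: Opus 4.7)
The plan is to carry out a direct block matrix computation. First I would invoke Lemma~\ref{lemma:inverse-symplectic-matrix} to replace $g^{-1}$ by the explicit expression
\[
g^{-1} = \mattwo{\transpose{d}}{-\transpose{b}}{-\transpose{c}}{\transpose{a}} ,
\]
which turns the problem into a product of three $2 \times 2$ block matrices whose blocks are ordinary $n \times n$ real matrices. Since the Levi block $y$ of $p$ will only be constrained a posteriori, I would keep $x, y, z$ as formal $n \times n$ blocks throughout the main computation, and only invoke the relation $y = \transpose{x}^{-1}$ at the very end.

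Next I would multiply in two stages. The intermediate product is
\[
g^{-1} p \;=\; \mattwo{\transpose{d} x}{\transpose{d} z - \transpose{b} y}{-\transpose{c} x}{-\transpose{c} z + \transpose{a} y} ,
\]
using that the lower-left block of $p$ vanishes. Right-multiplying by $g = \mattwo{a}{b}{c}{d}$ and grouping terms block by block then yields the four entries listed in the statement. This step is mechanical; the only care required is clerical bookkeeping of signs and transposes, which I would manage by computing each of the four blocks independently.

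For the parenthetical remark $y = \transpose{x}^{-1}$, I would apply Lemma~\ref{lemma:g-belongs-to-Sp} to $p \in P_S \subset G$: with the lower-left block of $p$ equal to $0$, the symplectic relation $\transpose{a} d - \transpose{c} b = 1$ of that lemma specializes to $\transpose{x} y = 1$, forcing $y = \transpose{x}^{-1}$. Since the entire argument is a routine calculation, no conceptual obstacle arises; the only real risk is transcription error, which is why I would do the two-stage multiplication rather than expanding everything at once.
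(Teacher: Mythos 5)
Your proposal is correct and matches the paper's own (very terse) proof, which simply says it is a calculation using Lemma~\ref{lemma:inverse-symplectic-matrix}; your two-stage block multiplication and the final appeal to Lemma~\ref{lemma:g-belongs-to-Sp} for $y = \transpose{x}^{-1}$ are exactly what is intended.
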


\begin{proof}
Just a calculation, using Lemma~\ref{lemma:inverse-symplectic-matrix}.
\end{proof}

A maximal compact subgroup $ K $ of $ G $ is given by 
$ K = \Sp_{2n}(\R) \cap \OO(2n) $.  

\begin{lemma}\label{lemma:max-compact-subgroup}
An element 
$ g = \mattwo{a}{b}{c}{d} \in G $  
belongs to $ K $ if and only if 
\begin{align*}
&
b = - c , \; d = a , 
\\
&
\transpose{a} b \in \Sym_n(\R) , \;\; \text{ and  } \;\; 
\transpose{a} a + \transpose{b} b = 1_n 
\end{align*}
hold.  
Consequently, 
\begin{equation}
K = 
\left\{ 
\begin{pmatrix}
a & b \\
-b & a 
\end{pmatrix}  \Bigm| 
a + i b \in \U(n) 
\right\}.  
\end{equation}
\end{lemma}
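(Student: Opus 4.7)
The plan is to combine the two characterizations $g \in \Sp_{2n}(\R)$ and $g \in \OO(2n)$, extracting the explicit form from the already-proved inverse formula in Lemma~\ref{lemma:inverse-symplectic-matrix}. More precisely, I would first observe that $g \in K$ means simultaneously $g \in G$ and $\transpose{g} = g^{-1}$. By Lemma~\ref{lemma:inverse-symplectic-matrix} we have
\begin{equation*}
g^{-1} = \mattwo{\transpose{d}}{-\transpose{b}}{-\transpose{c}}{\transpose{a}},
\qquad
\transpose{g} = \mattwo{\transpose{a}}{\transpose{c}}{\transpose{b}}{\transpose{d}},
\end{equation*}
and equating the two matrices entry by entry gives exactly $d = a$ and $c = -b$.

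Next, I would substitute these identifications into the three defining equations of $G$ from Lemma~\ref{lemma:g-belongs-to-Sp}. The conditions $\transpose{a}c \in \Sym_n(\R)$ and $\transpose{b}d \in \Sym_n(\R)$ both collapse to the single symmetry condition $\transpose{a}b \in \Sym_n(\R)$ (up to a harmless sign), while $\transpose{a}d - \transpose{c}b = 1_n$ becomes $\transpose{a}a + \transpose{b}b = 1_n$. Conversely, if $b=-c$, $d=a$, $\transpose{a}b \in \Sym_n(\R)$, and $\transpose{a}a+\transpose{b}b = 1_n$, then both orthogonality and the symplectic relations hold, so $g \in K$.

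Finally, to derive the explicit form of $K$, I would use the standard real-matrix model of $\U(n)$: the map $a+ib \mapsto \mattwo{a}{b}{-b}{a}$ is a group homomorphism from $\GL_n(\C)$ into $\GL_{2n}(\R)$ (it respects the multiplication $(a+ib)(c+id)=(ac-bd)+i(ad+bc)$ entry by entry). Expanding the unitarity relation $(a+ib)^{\ast}(a+ib) = 1_n$ into real and imaginary parts yields
\begin{equation*}
\transpose{a}a + \transpose{b}b = 1_n, \qquad \transpose{a}b - \transpose{b}a = 0,
\end{equation*}
which is precisely the pair of conditions obtained in the previous step. Hence $g \in K$ if and only if $g$ is the image of some $a+ib \in \U(n)$, giving the stated description.

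There is no real obstacle here; the only point that requires a little care is bookkeeping the transposes so that the three symplectic relations collapse consistently onto the two unitary ones, and verifying that the assignment $a+ib \mapsto \mattwo{a}{b}{-b}{a}$ (rather than a variant with different signs) is the one compatible with Lemma~\ref{lemma:g-belongs-to-Sp}.
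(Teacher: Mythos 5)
Your proof is correct and follows essentially the same route as the paper: use Lemma~\ref{lemma:inverse-symplectic-matrix} to equate $g^{-1}$ with $\transpose{g}$ and obtain $d=a$, $c=-b$, then substitute into the symplectic relations of Lemma~\ref{lemma:g-belongs-to-Sp}, and finally expand $(a+ib)^{\ast}(a+ib)=1_n$ to match the two resulting conditions with unitarity. You are only slightly more explicit than the paper in checking that the three symplectic conditions collapse to two, and in noting that $a+ib\mapsto\mattwo{a}{b}{-b}{a}$ is a homomorphism.
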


\begin{proof}
$ g \in G $ belongs to $ \OO(2n) $ if and only if 
$ \transpose{g} = g^{-1} $.
From Lemma~\ref{lemma:inverse-symplectic-matrix}, we get 
$ a = d $ and $ c = - b $.  
From~\ref{lemma:g-belongs-to-Sp}, 
we get the rest two equalities.

Note that if $ a + ib \in \U(n) $ 
\begin{align*}
(a + i b)^{\ast} \, (a + i b) 
&= (\transpose{a} - i \transpose{b}) \, (a + i b ) 
\\
&= (\transpose{a} a + \transpose{b} b ) + i ( \transpose{a} b - \transpose{b} a ) = 1_n .
\end{align*}
This last formula is equivalent to the above two equalities.
\end{proof}

\section{$ L $-orbits on the Lagrangian flag variety $ \Lambda $}
\label{section:L-orbits-on-Lagrangian-Grassmannian}

Now, let us begin with the investigation of $ L $ orbits on $ \Lambda = G/ P_S $, which should be well-known.  

Let us denote the Weyl group of $ P_S $ by $ W_{P_S} $, 
which is isomorphic to $ S_n $, the symmetric group of $ n $-th order.  
In fact, it coincides with the Weyl group of $ L $.  

By Bruhat decomposition, we have 
\begin{equation}
G/P_S 
= \bigcup_{w \in W_G} P_S w P_S/P_S
= \bigsqcup_{w \in W_{P_S} \backslash W_G/ W_{P_S}} P_S w P_S/P_S, 
\end{equation}
where in the second sum $ w $ moves over the representatives of the double cosets.
The double coset space 
$ W_{P_S} \backslash W_G/ W_{P_S} \simeq S_n \backslash (S_n \ltimes (\Z/ 2 \Z)^n) / S_n $ 
has a complete system of representatives of the form  
\begin{equation*}
\{ w_k = (1, \dots, 1, -1, \dots, -1) = (1^k, (-1)^{n - k}) \mid 0 \leq k \leq n \} \subset (\Z/ 2 \Z)^n .  
\end{equation*}
We realize $ w_k $ in $ G $ as 
\begin{equation}
w_k = 
\left( 
\begin{array}{cc|cc}
 & & {-1_{n -k}} & 
\\
& {1_k} &  &  
\\ \hline
{1_{n -k}} &  &  &  
\\
  &  &  & {1_k} 
\end{array}
\right) .
\end{equation}

\begin{lemma}\label{lemma:affine-open-in-kth-Bruhat-cell}
For $ 0 \leq k \leq n $, we temporarily write $ w = w_k $.  
Then 
$ P_S w P_S/ P_S = w (w^{-1} P_S w) P_S/P_S \simeq w^{-1} P_S w /(w^{-1} P_S w \cap P_S ) $ 
contains $ N_w $ given below as an open dense subset.  
\begin{equation}
\label{eq:open-affine-space-Nw}
N_w = 
\left\{ 
\begin{pmatrix}
1_n &  0 \\
\eta & 1_n 
\end{pmatrix}  \Bigm| 
\eta = 
\begin{pmatrix}
\zeta & \xi \\
\transpose{\xi} & 0_k 
\end{pmatrix}, \;
\zeta \in \Sym_{n - k}(\R), \, \xi \in \Mat_{n - k, k}(\R) 
\right\}
\end{equation}
\end{lemma}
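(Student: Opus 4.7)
Set $w = w_k$. The plan is to exhibit $N_w$ as an open dense subset of the Bruhat cell $P_S w P_S/P_S$ via the isomorphism $P_S w P_S/P_S \simeq w^{-1} P_S w/(w^{-1} P_S w \cap P_S)$ already stated. There will be three steps: (i) show $N_w \subset w^{-1} P_S w$; (ii) show $N_w \cap P_S = \{1\}$, so the induced map $N_w \hookrightarrow w^{-1} P_S w/(w^{-1} P_S w \cap P_S)$ is injective; (iii) match dimensions and conclude the image is open dense.

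For step (i), I compute $w n w^{-1}$ directly for a generic $n = \mattwo{1_n}{0}{\eta}{1_n} \in N_w$. Writing $w = \mattwo{A}{B}{-B}{A}$ with $A = \diag(0_{n-k}, 1_k)$ and $B = \diag(-1_{n-k}, 0_k)$, Lemma~\ref{lemma:inverse-symplectic-matrix} gives $w^{-1} = \mattwo{A}{-B}{B}{A}$, and the identities $A^2 + B^2 = 1_n$ and $AB = BA = 0$ reduce $w n w^{-1}$ to the block form $\mattwo{1_n + B\eta A}{-B\eta B}{A\eta A}{1_n - A\eta B}$. The critical point is that the vanishing $k \times k$ bottom-right block of $\eta$, combined with the support pattern of $A = \diag(0_{n-k}, 1_k)$, forces $A \eta A = 0$; this is precisely what makes $w n w^{-1}$ block upper triangular, hence an element of $P_S$. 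A bit more bookkeeping identifies the Levi part of the image as the upper-unipotent $\mattwo{1_{n-k}}{-\xi}{0}{1_k} \in \GL_n$ and the symmetric part as $-\diag(\zeta, 0_k) \in \Sym_n$.

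Step (ii) is automatic, since $N_w$ lies in the opposite unipotent radical $N^- = \{\mattwo{1_n}{0}{s}{1_n} : s \in \Sym_n\}$ of $P_S$, and $N^- \cap P_S = \{1\}$. For step (iii), the computation in (i) makes the map $N_w \hookrightarrow w^{-1} P_S w/(w^{-1} P_S w \cap P_S)$ a polynomial injection of smooth irreducible varieties; the dimension count
\[
\dim N_w = \binom{n-k+1}{2} + (n-k)k = \tfrac{(n-k)(n+k+1)}{2}
\]
should match $\dim(P_S w P_S/P_S) = \dim P_S - \dim \Stab_{P_S}(wV^+)$, computed from the block pattern imposed on $P_S$ by additionally preserving $wV^+ = \spanr{e_{n-k+1}, \ldots, e_{2n-k}}$. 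Since an injective morphism between smooth irreducible varieties of equal dimension is an open immersion (the differential at every point is injective, hence an isomorphism), $N_w$ then realizes an open dense subset of the Bruhat cell.

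The main obstacle is step (i): the shape of $N_w$ in the statement---symmetric $\eta$ with vanishing $k \times k$ bottom-right block---is precisely what is dictated by the requirement $A \eta A = 0$, so the content of the lemma is really the matching between the supports of $\eta$ and $A$; once this is understood, the rest is routine bookkeeping and dimension counting.
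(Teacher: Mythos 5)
Your approach is genuinely different from the paper's, and runs in the opposite direction. The paper starts from a generic $p = \mattwo{x}{z}{0}{y} \in P_S$, conjugates to get $w^{-1}pw$, and rewrites the result as $\mattwo{1}{0}{\eta}{1}$ times a block upper triangular factor; it then checks explicitly (by picking the free parameters $z_{11}, z_{21}, x_{12}$, with $x_{21}=y_{12}=0$) that the $\eta$'s so obtained exhaust all matrices of the required shape. You instead start from $\eta$ of the given shape, conjugate $\mattwo{1_n}{0}{\eta}{1_n}$ by $w$, and verify that the result lies in $P_S$; the computation $wnw^{-1} = \mattwo{1 + B\eta A}{-B\eta B}{A\eta A}{1 - A\eta B}$ with $A\eta A = 0$ is correct, and I checked that the Levi and symmetric parts you read off are right. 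You then close the argument with a dimension count. Your approach makes the role of the vanishing $k\times k$ block of $\eta$ transparent and avoids the somewhat ad hoc surjectivity check of the paper, at the price of having to compute the dimension of the cell; the paper's approach is more hands-on and implicitly parametrizes the cell.

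Two gaps remain in your write-up. First, you state $\dim N_w = \frac{(n-k)(n+k+1)}{2}$ and assert it ``should match'' $\dim(P_SwP_S/P_S)$ without carrying out the computation; since the equality of dimensions is exactly what makes the argument work, it needs to be verified. Concretely, $P_S \cap wP_Sw^{-1} = \Stab_{P_S}(wV^+)$ with $wV^+ = \spanr{e_{n-k+1},\ldots,e_{2n-k}}$, and writing a generic element of $P_S$ as $\mattwo{a}{as}{0}{\transpose{a}^{-1}}$ with $s \in \Sym_n(\R)$, stabilization forces $a_{12} = 0$ and $s_{11} = 0$, so that $\dim(P_SwP_S/P_S) = \dim P_S - \dim(P_S\cap wP_Sw^{-1}) = (n-k)k + \frac{(n-k)(n-k+1)}{2}$, which does agree. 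Second, the parenthetical ``the differential at every point is injective, hence an isomorphism'' does not follow from injectivity of the map alone (consider $t\mapsto t^3$); what rescues the argument is that $N_w \hookrightarrow G/P_S$ factors through the open embedding of the full opposite unipotent radical $\conjugate{N_S} \hookrightarrow G/P_S$, hence is automatically an immersion. With these two points filled in, your argument is complete and correct.
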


\begin{proof}
Take 
$ \mattwo{x}{z}{0}{y} \in P_S $ and write 
$ w = \mattwo{a}{b}{c}{d} $, 
where $ a = d = \mattwo{0}{0}{0}{1_k} , \; 
c = - b = \mattwo{1_{n - k}}{0}{0}{0} $.    
Then, using the formula in Lemma~\ref{lemma:conjugation-of-psg-elements}, 
we can calculate as 
\begin{align}
w^{-1} \mattwo{x}{z}{0}{y} w 
&= \begin{pmatrix}
\transpose{d} x a + \transpose{d} z c - \transpose{b} y c & 
\transpose{d} x b + \transpose{d} z d - \transpose{b} y d  
\\
- \transpose{c} x a - \transpose{c} z c + \transpose{a} y c & 
- \transpose{c} x b - \transpose{c} z d + \transpose{a} y d  
\end{pmatrix}
\\
&= \begin{pmatrix}
  x_{22} + z_{21} + y_{11} & - x_{21} + z_{22} + y_{12} 
\\
- x_{12} - z_{11} + y_{21} & x_{11} - z_{12} + y_{22}
\end{pmatrix}
\\
&= 
\begin{pmatrix}
\begin{array}{cc|cc}
 y_{11} &  0      &  0      &  y_{12} \\
 z_{21} &  x_{22} & -x_{21} &  z_{22} \\ \hline
-z_{11} & -x_{12} &  x_{11} & -z_{12} \\
 y_{21} &  0      &  0      &  y_{22} 
\end{array}
\end{pmatrix}
\label{eq:conjugation-wpw}
\end{align}
Let us rewrite the last formula in the form
\begin{equation*}
\mattwo{1}{0}{\eta}{1} \mattwo{\alpha}{\beta}{0}{\delta} 
= \mattwo{\alpha}{\beta}{\eta \alpha}{\eta \beta + \delta} , 
\end{equation*}
so that we get 
\begin{align}
\eta 
&= 
\begin{pmatrix}
-z_{11} & -x_{12} \\
 y_{21} &  0      
\end{pmatrix}
\begin{pmatrix}
 y_{11} &  0      \\
 z_{21} &  x_{22} 
\end{pmatrix}^{-1}
\notag
\\
&= 
\begin{pmatrix}
-z_{11} & -x_{12} \\
 y_{21} &  0      
\end{pmatrix}
\begin{pmatrix}
 y_{11}^{-1} &  0      \\
- x_{22}^{-1}  z_{21} y_{11}^{-1} &  x_{22}^{-1} 
\end{pmatrix}
\notag
\\
&=
\begin{pmatrix}
 -z_{11} y_{11}^{-1} + x_{12} x_{22}^{-1}  z_{21} y_{11}^{-1} &  - x_{12} x_{22}^{-1} \\
  y_{21} y_{11}^{-1} &  0
\end{pmatrix}, 
\label{eq:last-formula-eta}
\end{align}
provided that $ y_{11}^{-1} $ and $ x_{22}^{-1} $ exist (an open condition).  
Note that we can take $ z_{11} $ and $ z_{21} $ arbitrary, 
and also that, if we put $ x_{21} = 0 $ and $ y_{12} = 0 $, 
we can take $ x_{12} $ (which determines $ y_{21} $) arbitrary.  
This shows the last formula \eqref{eq:last-formula-eta} above exhausts 
$ \eta $ of the form in \eqref{eq:open-affine-space-Nw}.
\end{proof}

\begin{remark}
The formula \eqref{eq:last-formula-eta} actually gives a symmetric matrix.    
One can check this directly, using $ y = \transpose{x}^{-1} $.  
See also Lemma~\ref{lemma:L-action-on-affine-open-set-Nw} below.
\end{remark}

Let us consider $ L = \GL_n(\R) $ action on the $ k $-th Bruhat cell 
$ P_S w_k P_S/P_S $.  
It is just the left multiplication.  
However, if we identify it with  
$ w^{-1} P_S w/ (w^{-1} P_S w) \cap P_S $ as in Lemma~\ref{lemma:affine-open-in-kth-Bruhat-cell}, 
the action of $ a \in L $ is given by the left multiplication of $ w^{-1} a w $.  
This conjugation is explicitly given as 
%
%%\begin{equation}
\begin{multline}
\label{eq:waw-expressed-by-h}
w^{-1} a w 
= 
\begin{pmatrix}
\begin{array}{cc|cc}
 h'_1   &  0      &  0      &  h'_2   \\
 0      &  h_4    & -h_3    &  0      \\ \hline
 0      & -h_2    &  h_1    &  0      \\
 h'_3   &  0      &  0      &  h'_4   
\end{array}
\end{pmatrix}
\\
\text{ where } a = \mattwo{h}{0}{0}{\transpose{h}^{-1}}, \;
h = \mattwo{h_1}{h_2}{h_3}{h_4}, \;
\transpose{h}^{-1} = h' = \mattwo{h'_1}{h'_2}{h'_3}{h'_4} ,
\end{multline}
%%\end{equation}
%
which can be read off from Equation~\eqref{eq:conjugation-wpw}.

\begin{lemma}\label{lemma:L-orbits-on-kth-Bruhat-cell-finiteness-representatives}
There are exactly $ \binom{n - k + 2}{2} $ of 
$ L $-orbits on the Bruhat cell $ P_S w_k P_S/P_S \; (0 \leq k \leq n) $.  
A complete representatives of $ L $-orbits is given as 
\begin{equation*}
\begin{split}
\Bigl\{ 
\mattwo{1}{z}{0}{1} w_k P_S/P_S \Bigm| 
z = \mattwo{I_{r,s}}{0}{0}{0} \in \Sym_n(\R), \, 
0 \leq r+ s \leq n - k 
\Bigr\} ,
\\
\quad
\text{ where $ I_{r,s} = \diag(1_r, - 1_s) $.}
\end{split}
\end{equation*}
\end{lemma}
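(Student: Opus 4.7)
The plan is geometric. I would identify $\Lambda = G/P_S$ with the Lagrangian Grassmannian of $V$, describe the $k$-th Bruhat cell as Lagrangians of prescribed intersection dimension with $V^+$, and then reduce the classification of $L$-orbits to a standard congruence problem for symmetric matrices.

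First, I would verify that the cell $P_S w_k P_S/P_S$ consists of those Lagrangians $W$ with $\dim(W \cap V^+) = k$: a direct matrix computation gives $w_k V^+ \cap V^+ = \spanr{e_{n-k+1}, \dots, e_n}$, and $P_S$ acts transitively on Lagrangians with a fixed intersection dimension. For such a $W$, set $W_0 = W \cap V^+$ and let $\pi : V \to V^-$ be the projection along $V^+$. Using the duality $V^- = (V^+)^{\ast}$ induced by the symplectic form, one verifies $\pi(W) = W_0^{\perp} \subset V^-$, so any section of $0 \to W_0 \to W \to W_0^{\perp} \to 0$ yields a linear map $\tau_W : W_0^{\perp} \to V^+/W_0$, independent of the section. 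The key point is that the isotropy of $W$ is equivalent to the symmetry of $\tau_W$ viewed as an element of $\Sym^2(V^+/W_0)$ via $V^+/W_0 \simeq (W_0^{\perp})^{\ast}$. This produces an $L$-equivariant bijection between the $k$-th cell and the total space of the bundle over $\Grass_k(V^+)$ with fiber $\Sym^2(V^+/W_0)$.

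Next, since $L = \GL(V^+)$ acts transitively on $\Grass_k(V^+)$, I would fix the standard base point $\spanr{e_{n-k+1}, \dots, e_n}$ and note that its stabilizer in $L$ is a maximal parabolic whose unipotent radical acts trivially on the $(n-k)$-dimensional quotient of $V^+$. Thus $L$-orbits on the cell biject with $\GL_{n-k}(\R)$-orbits on $\Sym_{n-k}(\R)$ under the congruence action $z \mapsto h z \transpose{h}$. Sylvester's law of inertia yields the representatives $\diag(1_r, -1_s, 0)$ for $r, s \geq 0$ with $r + s \leq n - k$, whose total number is $\sum_{m=0}^{n-k}(m+1) = \binom{n-k+2}{2}$.

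Finally, I would match the listed representatives by explicit computation: for $z = \mattwo{I_{r,s}}{0}{0}{0} \in \Sym_n(\R)$, the Lagrangian $\mattwo{1}{z}{0}{1} w_k V^+$ is spanned by $e_i + e_{n+i}$ for $1 \leq i \leq r$, $-e_i + e_{n+i}$ for $r < i \leq r+s$, $e_{n+i}$ for $r+s < i \leq n-k$, and $e_{n-k+j}$ for $1 \leq j \leq k$; its intersection with $V^+$ is the fixed base $\spanr{e_{n-k+1}, \dots, e_n}$, and the associated $\tau_W$ has matrix $I_{r,s}$ padded by zeros to size $n-k$, placing it in the $(r,s)$-orbit. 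The main obstacle is the first step -- establishing the geometric bijection in an $L$-equivariant way, particularly the verification that symmetry of $\tau_W$ corresponds to isotropy of $W$ under the relevant dualities; once in place, the rest of the argument is routine.
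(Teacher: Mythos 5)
Your proposal is correct, and it reaches the same conclusion via a genuinely more geometric route than the paper. The paper's proof stays inside the explicit matrix parametrization of the cell $P_S w_k P_S/P_S$: it first observes that $P_S = L N_S$ lets one take coset representatives of the form $\mattwo{1}{z}{0}{1} w_k P_S/P_S$, then reads off from the explicit $\eta$-formulas of Lemma~\ref{lemma:affine-open-in-kth-Bruhat-cell} that the effective parameter is a symmetric matrix $\zeta \in \Sym_{n-k}(\R)$ transformed under $L$ by $\zeta \mapsto h_1 \zeta \transpose{h_1}$, and finishes by Sylvester's law of inertia. You instead work intrinsically on the Lagrangian Grassmannian: you identify the $k$-th cell as the set of Lagrangians $W$ with $\dim(W \cap V^+) = k$ (which does hold, as one checks $w_k V^+ \cap V^+ = \spanr{e_{n-k+1}, \dots, e_n}$ and $P_S$ acts transitively on Lagrangians of prescribed intersection dimension with $V^+$), build the $L$-equivariant bijection with the symmetric-form bundle over $\Grass_k(V^+)$ via the map $\tau_W : W_0^\perp \to V^+/W_0$, and reduce by transitivity of $L$ on $\Grass_k(V^+)$ to congruence orbits in $\Sym_{n-k}(\R)$. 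Both arguments bottom out in the same application of Sylvester's law and yield the same count $\binom{n-k+2}{2}$; your version trades the paper's short but opaque computation with the conjugated unipotent $N_w$ for a cleaner conceptual picture, at the cost of having to verify a few linear-algebra compatibilities (isotropy of $W$ $\iff$ symmetry of $\tau_W$ under the duality $V^+/W_0 \simeq (W_0^\perp)^\ast$, and that the unipotent radical of $\Stab_L(W_0)$ acts trivially on the fiber) that you have correctly identified as the crux of the setup. Your final explicit matching of the listed representatives $\mattwo{1}{z}{0}{1} w_k V^+$ to the normal forms is also correct and makes the identification with the paper's stated representatives unambiguous.
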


\begin{proof}
For the brevity, we will write $ w = w_k $.  
Firstly, we observe that by the left multiplication of $ L $ clearly we can choose orbit representatives from the set 
\begin{equation*}
\{ \mattwo{1}{z}{0}{1} w P_S/P_S \mid z \in \Sym_n(\R) \}  .
\end{equation*}
Then, by the calculations in the proof of 
Lemma~\ref{lemma:affine-open-in-kth-Bruhat-cell} and Equation~\eqref{eq:last-formula-eta}, 
it reduces to the subset 
\begin{equation}
\biggl\{ \mattwo{1_n}{0}{\eta}{1_n} 
\biggm| 
\eta = \mattwo{\zeta}{0}{0}{0}, \; \zeta \in \Sym_{n - k}(\R) \biggr\} \subset w^{-1} P_S w / (w^{-1} P_S w) \cap P_S.
\end{equation}
Now let us consider the action of $ L $ on this set.  
Take 
$ a = \mattwo{h}{0}{0}{\transpose{h}^{-1}} \in L $, where 
$ h = \diag(h_1, 1_k) \; (h_1 \in \GL_{n - k}(\R)) $.  
Then, the action of $ a $ is the left multiplication of $ w^{-1} a w $ as explained above 
(see Equation~\eqref{eq:waw-expressed-by-h}).  
As a consequence, it brings to 
\begin{equation*}
(w^{-1} a w) \mattwo{1}{0}{\eta}{1} P_S/ P_S
=
\begin{pmatrix}
\begin{array}{cc|cc}
h_1' & 0 & & \\
0 & 1 & & \\ \hline
h_1 \zeta & 0 & h_1 & 0 \\
0 & 0 & 0 & 1
\end{array}
\end{pmatrix} P_S/P_S
=
\begin{pmatrix}
\begin{array}{cc|cc}
1 & 0 & & \\
0 & 1 & & \\ \hline
h_1 \zeta \transpose{h_1} & 0 & 1 & 0 \\
0 & 0 & 0 & 1
\end{array}
\end{pmatrix} P_S/P_S.
\end{equation*}
Now it is well known that for a suitable choice of $ h_1 \in \GL_{n - k}(\R) $, we get 
\begin{equation*}
h_1 \zeta \transpose{h_1} = \mattwo{I_{r,s}}{0}{0}{0} 
\end{equation*}
for a certain signature $ (r, s) $ with $ r + s \leq n - k $.
\end{proof}

Let us explicitly describe the $ L $-action on 
$ N_w \subset (w^{-1} P_S w) / (w^{-1} P_S w) \cap P_S $.  

\begin{lemma}\label{lemma:L-action-on-affine-open-set-Nw}
The action of $ w^{-1} a w $ in Equation~\eqref{eq:waw-expressed-by-h} on 
\begin{equation*}
\mattwo{1_n}{0_n}{\eta}{1_n} \in N_w , \quad 
\eta = \mattwo{\zeta}{\xi}{\transpose{\xi}}{0_k} \quad 
(\zeta \in \Sym_{n - k}(\R), \; \xi \in \Mat_{n - k, k}(\R)) , 
\end{equation*}
is given by 
\begin{equation*}
\eta = \mattwo{\zeta}{\xi}{\transpose{\xi}}{0} \mapsto a \wdot \eta = \mattwo{A}{B}{\transpose{B}}{0} 
\quad
\text{ where } 
\quad
\Biggl\{ 
\begin{array}{l}
A = (h_1 + B h_3) \zeta \transpose{(h_1 + B h_3)}, 
\\[1.2ex]
B = (- h_2 + h_1 \xi) (h_4 - h_3 \xi)^{-1} .
\end{array}
\end{equation*}
So the action on $ \xi $-part is linear fractional, while 
action on $ \zeta $-part is a mixture of unimodular and linear fractional action.
\end{lemma}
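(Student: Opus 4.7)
The plan is to do a direct block-matrix computation: left-multiply $\mattwo{1_n}{0}{\eta}{1_n}$ by $w^{-1}aw$ as given in~\eqref{eq:waw-expressed-by-h}, and then renormalize modulo right multiplication by $P_S$ to bring the product back into the form $\mattwo{1_n}{0}{\eta'}{1_n}$ with $\eta' \in N_w$.

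First I would regroup~\eqref{eq:waw-expressed-by-h} as a $2\times 2$ matrix of $n\times n$ blocks, namely $w^{-1}aw = \mattwo{X}{Y}{Z}{W}$ with $X = \mattwo{h'_1}{0}{0}{h_4}$, $Y = \mattwo{0}{h'_2}{-h_3}{0}$, $Z = \mattwo{0}{-h_2}{h'_3}{0}$, and $W = \mattwo{h_1}{0}{0}{h'_4}$. Multiplying by $\mattwo{1}{0}{\eta}{1}$ produces $\mattwo{X+Y\eta}{Y}{Z+W\eta}{W}$, and on the open locus where $X + Y\eta$ is invertible the standard block LU-factorization extracts a $P_S$-factor on the right and yields $\eta' = (Z + W\eta)(X + Y\eta)^{-1}$. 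Plugging in $\eta = \mattwo{\zeta}{\xi}{\transpose{\xi}}{0}$ gives $X + Y\eta = \mattwo{P}{0}{-h_3\zeta}{R}$ with $P = h'_1 + h'_2\transpose{\xi}$ and $R = h_4 - h_3\xi$, whose inverse is elementary since the matrix is block lower-triangular. Multiplying out, the four blocks of $\eta'$ emerge as $\mattwo{(h_1+Bh_3)\zeta P^{-1}}{B}{(h'_3+h'_4\transpose{\xi})P^{-1}}{0}$; in particular the top-right block is $B$ on the nose and the bottom-right block is $0$, confirming that $\eta' \in N_w$.

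Since the Lagrangian condition is preserved by the $G$-action, $\eta'$ must be symmetric, so the bottom-left block automatically equals $\transpose{B}$. Hence the only real identification to make is the top-left block with $A = (h_1 + Bh_3)\zeta\transpose{(h_1 + Bh_3)}$, which reduces to proving the single matrix identity $(h_1 + Bh_3)\transpose{P} = 1_{n-k}$, equivalently $P^{-1} = \transpose{(h_1 + Bh_3)}$. For this I would first rewrite the defining equation $BR = -h_2 + h_1\xi$ of $B$ as $(h_1 + Bh_3)\xi = h_2 + Bh_4$, and then expand the four terms of $(h_1 + Bh_3)(\transpose{h'_1} + \xi\transpose{h'_2})$ using the block relations $h_1\transpose{h'_1} + h_2\transpose{h'_2} = 1_{n-k}$ and $h_3\transpose{h'_1} + h_4\transpose{h'_2} = 0$ coming from $h\transpose{h'} = 1_n$. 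The $B$-terms then cancel in pairs, and the block $h_1\transpose{h'_1} + h_2\transpose{h'_2} = 1_{n-k}$ is what remains.

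The main obstacle is precisely this last verification: one has to juggle $h$ and $h' = \transpose{h}^{-1}$ simultaneously, and the cancellation only closes up once the rewriting $(h_1 + Bh_3)\xi = h_2 + Bh_4$ is brought into play so that $B$ drops out of the final expression. Once that identity is in hand everything else is routine $n\times n$ bookkeeping; and as a sanity check, the remark following Lemma~\ref{lemma:affine-open-in-kth-Bruhat-cell} already asserts the symmetry that the present computation both verifies and refines into the explicit formula above.
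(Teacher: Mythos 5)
Your proposal is correct and mostly mirrors the paper's own computation: the same left multiplication by $w^{-1}aw$ regrouped into $n\times n$ blocks, the same block-LU extraction of $\eta' = (Z+W\eta)(X+Y\eta)^{-1}$, and the same resulting four blocks $(h_1+Bh_3)\zeta P^{-1}$, $B$, $(h'_3+h'_4\transpose{\xi})P^{-1}$, $0$ with $P = h'_1 + h'_2\transpose{\xi}$ and $R = h_4 - h_3\xi$. Where you genuinely diverge is in closing out the argument. The paper establishes $C=\transpose{B}$ for the bottom-left block $C$ by an explicit chain of identities (its \eqref{eq:hxixih-11}--\eqref{eq:hxixih-22}) derived from comparing the mutually inverse matrices $h\mattwo{1}{-\xi}{0}{1}$ and $\mattwo{1}{\xi}{0}{1}h^{-1}$, and then converts $P^{-1}$ to $\transpose{h_1}+\transpose{h_3}C$ via \eqref{eq:hxixih-11-transposed} before substituting $C=\transpose{B}$ to obtain the formula for $A$. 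You instead short-circuit the bottom-left block by noting that $\eta'$ is automatically symmetric (because $\mattwo{1}{0}{\eta'}{1}$ lies in $\conjugate{N_S}\cap G$), and then prove the single identity $(h_1+Bh_3)(\transpose{h'_1}+\xi\transpose{h'_2})=1_{n-k}$ directly from the two block relations $h_1\transpose{h'_1}+h_2\transpose{h'_2}=1_{n-k}$, $h_3\transpose{h'_1}+h_4\transpose{h'_2}=0$ of $h\transpose{h'}=1_n$ together with the rewriting $(h_1+Bh_3)\xi=h_2+Bh_4$ of the defining equation for $B$; I checked the cancellation and it closes up exactly as you describe. This is a cleaner finish: it needs only two of the four identities the paper extracts and never computes $C$ explicitly, whereas the paper's version is more self-contained and does not appeal to the symplectic constraint. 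Both are correct; yours is the slicker route to the formula $A=(h_1+Bh_3)\zeta\transpose{(h_1+Bh_3)}$.
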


\begin{proof}
Take $ a \in L $ as in Equation~\eqref{eq:waw-expressed-by-h} 
and we use the formula of $ w^{-1} a w $ there.
\begin{align*}
w^{-1} a w & \, 
\begin{pmatrix}
\begin{array}{cc|cc}
1_{n - k} & 0 & & \\
0 & 1_k & & \\ \hline
\zeta & \xi & 1_{n - k} & 0 \\
\transpose{\xi} & 0 & 0 & 1_k
\end{array}
\end{pmatrix}
= 
\begin{pmatrix}
\begin{array}{cc|cc}
 h'_1   &  0      &  0      &  h'_2   \\
 0      &  h_4    & -h_3    &  0      \\ \hline
 0      & -h_2    &  h_1    &  0      \\
 h'_3   &  0      &  0      &  h'_4   
\end{array}
\end{pmatrix}
\begin{pmatrix}
\begin{array}{cc|cc}
1_{n - k} & 0 & & \\
0 & 1_k & & \\ \hline
\zeta & \xi & 1_{n - k} & 0 \\
\transpose{\xi} & 0 & 0 & 1_k
\end{array}
\end{pmatrix}
\\
&= 
\begin{pmatrix}
\begin{array}{cc|cc}
 h'_1 + h'_2 \transpose{\xi} &  0             &  0      &  h'_2   \\
-h_3 \zeta                   &  h_4 - h_3 \xi & -h_3    &  0      \\ \hline
 h_1 \zeta                   & -h_2 + h_1 \xi &  h_1    &  0      \\
 h'_3 + h'_4 \transpose{\xi} &  0             &  0      &  h'_4   
\end{array}
\end{pmatrix}
\\
&=: \mattwovbar{1}{0}{\eta}{1} \mattwovbar{\alpha}{\beta}{0}{\delta} 
= \mattwovbar{\alpha}{\beta}{\eta \alpha}{\eta \beta + \delta} .
\end{align*}
From this, we calculate 
\begin{align*}
\eta 
&= 
\begin{pmatrix}
\begin{array}{cc}
 h_1 \zeta                   & -h_2 + h_1 \xi \\
 h'_3 + h'_4 \transpose{\xi} &  0             
\end{array}
\end{pmatrix}
\begin{pmatrix}
\begin{array}{cc}
 h'_1 + h'_2 \transpose{\xi} &  0             \\
-h_3 \zeta                   &  h_4 - h_3 \xi 
\end{array}
\end{pmatrix}^{-1}
\\
&= 
\begin{pmatrix}
\begin{array}{cc}
 h_1 \zeta                   & -h_2 + h_1 \xi \\
 h'_3 + h'_4 \transpose{\xi} &  0             
\end{array}
\end{pmatrix}
\begin{pmatrix}
\begin{array}{cc}
 (h'_1 + h'_2 \transpose{\xi})^{-1}                                &  0             \\
(h_4 - h_3 \xi)^{-1} h_3 \zeta  (h'_1 + h'_2 \transpose{\xi})^{-1} &  (h_4 - h_3 \xi)^{-1} 
\end{array}
\end{pmatrix}
\\
&=: \mattwo{A}{B}{C}{0}, 
\end{align*}
where 
\begin{align*}
A &= 
h_1 \zeta (h'_1 + h'_2 \transpose{\xi})^{-1} 
+ (-h_2 + h_1 \xi) (h_4 - h_3 \xi)^{-1} h_3 \zeta  (h'_1 + h'_2 \transpose{\xi})^{-1} , 
\\
B &= (-h_2 + h_1 \xi) (h_4 - h_3 \xi)^{-1} , 
\\
C &= ( h'_3 + h'_4 \transpose{\xi})  (h'_1 + h'_2 \transpose{\xi})^{-1} .
\end{align*}
We will rewrite these formulas neatly.  

Firstly, we notice it should hold $ B = \transpose{C} $.  
Let us check it.
For this, we compare $ h \mattwo{1}{-\xi}{0}{1} $ and $ \transpose{h}^{-1} \mattwo{1}{0}{\transpose{\xi}}{1} $.  
Using notation $ h' = \transpose{h}^{-1} $, we calculate both as 
\begin{align}
\label{eq:h1-x01}
&
h \mattwo{1}{-\xi}{0}{1} 
= 
\mattwo{h_1}{h_2}{h_3}{h_4} \mattwo{1}{-\xi}{0}{1} 
= 
\begin{pmatrix}
\begin{array}{cc}
h_1 & - h_1 \xi + h_2 \\
h_3 & - h_3 \xi + h_4
\end{array}
\end{pmatrix}
\\
&
\transpose{h}^{-1} \mattwo{1}{0}{\transpose{\xi}}{1} 
= 
h' \mattwo{1}{0}{\transpose{\xi}}{1} 
= 
\mattwo{h'_1}{h'_2}{h'_3}{h'_4} \mattwo{1}{0}{\transpose{\xi}}{1} 
= 
\begin{pmatrix}
\begin{array}{cc}
h'_1 + h'_2 \transpose{\xi} & h'_2 \\
h'_3 + h'_4 \transpose{\xi} & h'_4 
\end{array}
\end{pmatrix}
\notag
\\
\intertext{
$ \therefore \;\; $ taking transpose, }
\label{eq:1x01h-1}
&
\mattwo{1}{\xi}{0}{1} h^{-1} 
=
\begin{pmatrix}
\begin{array}{cc}
\transpose{h'_1} + \xi \transpose{h'_2} & \transpose{h'_3} + \xi \transpose{h'_4} \transpose{\xi} \\
\transpose{h'_2} & \transpose{h'_4}
\end{array}
\end{pmatrix}
\end{align}
Since \eqref{eq:h1-x01} and \eqref{eq:1x01h-1} are mutually inverse, 
we get
\begin{align}
\label{eq:hxixih-11}
&
(\transpose{h'_1} + \xi \transpose{h'_2}) h_1 
+ (\transpose{h'_3} + \xi \transpose{h'_4} \transpose{\xi}) h_3 
= 1_{n - k} , 
\\
\label{eq:hxixih-12}
&
(\transpose{h'_1} + \xi \transpose{h'_2}) (- h_1 \xi + h_2)
+ (\transpose{h'_3} + \xi \transpose{h'_4} \transpose{\xi}) (- h_3 \xi + h_4) 
= 0 , 
\\
\label{eq:hxixih-21}
&
\transpose{h'_2} h_1 + 
\transpose{h'_4} h_3 = 0 , 
\\
\label{eq:hxixih-22}
&
\transpose{h'_2} (- h_1 \xi + h_2) + 
\transpose{h'_4} (- h_3 \xi + h_4) = 1_k , 
\\
\intertext{and taking transpose of Equation \eqref{eq:hxixih-11}, }
\label{eq:hxixih-11-transposed}
&
\transpose{h_1} (h'_1 + h'_2 \transpose{\xi}) + \transpose{h_3} (h'_3 + h'_4 \transpose{\xi}) = 1_{n - k} .
\end{align}
Now, we calculate 
\begin{multline*}
\transpose{C} = 
\transpose{\Bigl[ 
(h'_3 + h'_4 \transpose{\xi}) (h'_1 + h'_2 \transpose{\xi})^{-1} 
\Bigr]}
\\
= 
(\transpose{h'_1} + \xi \transpose{h'_2})^{-1} (\transpose{h'_3} + \xi \transpose{h'_4} \transpose{\xi})
=
- (- h_1 \xi + h_2) (- h_3 \xi + h_4)^{-1}
= B , 
\end{multline*}
where in the last equality we use Equation \eqref{eq:hxixih-12}.  
This also proves the formula for linear fractional action on $ \xi $.  

Secondly, we check that $ A $ is symmetric.  
\begin{align*}
A &= 
h_1 \zeta (h'_1 + h'_2 \transpose{\xi})^{-1} + B h_3 \zeta  (h'_1 + h'_2 \transpose{\xi})^{-1} 
=
(h_1  + B h_3) \, \zeta \, (h'_1 + h'_2 \transpose{\xi})^{-1} 
\\
&= 
(h_1  + B h_3) \, \zeta \Bigl[ \transpose{h_1} + \transpose{h_3} (h'_3 + h'_4 \transpose{\xi}) (h'_1 + h'_2 \transpose{\xi})^{-1} \Bigr]
\quad (\text{by Eq.~\eqref{eq:hxixih-11-transposed}})
\\
&= ( h_1 + B h_3 ) \zeta ( \transpose{h_1} + \transpose{h_3} C ) 
\\
&= (h_1 + B h_3) \zeta \transpose{(h_1 + B h_3)} .
\qquad (\because \;\; C = \transpose{B})
\end{align*}
This proves that $ A $ is symmetric and at the same time the formula of the action on $ \zeta $ in the lemma.
\end{proof}

\begin{lemma}\label{lemma:stabilizer-L-zeta}
For a representative 
\begin{equation*}
p_{(k; r, s)} := 
\begin{pmatrix}
\begin{array}{cc|cc}
1_{n - k} & 0 & & \\
0 & 1_k & & \\ \hline
\zeta & 0 & 1_{n - k} & 0 \\
0 & 0 & 0 & 1_k
\end{array}
\end{pmatrix}, 
\qquad
%%
%%\mattwo{1_n}{0_n}{\eta}{1_n} , 
%%\eta = \mattwo{\zeta}{0}{0}{0}, 
%%
\zeta = \mattwo{I_{r,s}}{0}{0}{0} \in \Sym_{n - k}(\R) 
\end{equation*}
of $ L $-orbits in the $ k $-th Bruhat cell $ w^{-1} P_S w/ (w^{-1} P_S w) \cap P_S $ 
(see Lemma~\ref{lemma:L-orbits-on-kth-Bruhat-cell-finiteness-representatives}), 
the stabilizer is given by 
\begin{equation}\label{eq:stabilizer-pkrs}
\Stab_L(p_{(k; r,s)}) = 
\Bigl\{ a = \mattwo{h}{0}{0}{\transpose{h}^{-1}} \Bigm| 
h = \mattwo{h_1}{0}{h_3}{h_4} \in \GL_n(\R),  \; 
h_1 \zeta \transpose{h_1} = \zeta \Bigr\} .
\end{equation}
Thus an orbit $ \calorbit_{(k; r,s)} $ through $ p_{(k; r, s)} $ is isomorphic to 
$ \GL_n(\R) / H_{(k; r, s)} $, where 
$ H_{(k; r,s)} $ is the collection of $ h $ given in Equation~\eqref{eq:stabilizer-pkrs}.
\end{lemma}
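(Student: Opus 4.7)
The plan is to apply Lemma~\ref{lemma:L-action-on-affine-open-set-Nw} directly at the distinguished representative. In the $N_w$-coordinates, $p_{(k;r,s)}$ corresponds to $\xi_0 = 0 \in \Mat_{n-k,k}(\R)$ and $\zeta_0 = \mattwo{I_{r,s}}{0}{0}{0} \in \Sym_{n-k}(\R)$. An element $a = \mattwo{h}{0}{0}{\transpose{h}^{-1}} \in L$ with $h = \mattwo{h_1}{h_2}{h_3}{h_4}$ stabilizes $p_{(k;r,s)}$ precisely when $a \wdot \eta_0 = \eta_0$, which by Lemma~\ref{lemma:L-action-on-affine-open-set-Nw} is equivalent to the two simultaneous conditions $B = 0$ and $A = \zeta_0$.

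Specializing the formula $B = (-h_2 + h_1\xi)(h_4 - h_3\xi)^{-1}$ at $\xi = 0$ gives $B = -h_2 h_4^{-1}$, so $B = 0$ forces $h_2 = 0$. Once $h_2 = 0$, the matrix $h$ is block lower triangular, hence $\det h = \det h_1 \cdot \det h_4$; invertibility of $h$ therefore forces both $h_1 \in \GL_{n-k}(\R)$ and $h_4 \in \GL_k(\R)$ automatically. The formula for $A$ then collapses to $A = h_1 \zeta_0 \transpose{h_1}$, and the equation $A = \zeta_0$ becomes exactly the form-preserving condition $h_1 \zeta_0 \transpose{h_1} = \zeta_0$. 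Conversely, any $h$ of the form asserted in \eqref{eq:stabilizer-pkrs} evidently satisfies both $B = 0$ and $A = \zeta_0$, so $\Stab_L(p_{(k;r,s)})$ is precisely $H_{(k;r,s)}$, and the orbit isomorphism $\calorbit_{(k;r,s)} \simeq \GL_n(\R)/H_{(k;r,s)}$ follows from the orbit-stabilizer theorem via $L \simeq \GL_n(\R)$.

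The one delicate point---which I would view as the main obstacle---is that the formulas of Lemma~\ref{lemma:L-action-on-affine-open-set-Nw} are only valid on the Zariski-open subset where $h_4 - h_3\xi$ is invertible, i.e., at our point where $h_4$ is non-singular. In principle, one should worry that stabilizer elements with singular $h_4$ are missed. To rule this out rigorously, the plan is to fall back on a direct computation of the conjugate $p_{(k;r,s)}^{-1}\,(w_k^{-1} a\, w_k)\,p_{(k;r,s)}$ in $4 \times 4$ block form, using the explicit expression of $w_k^{-1} a w_k$ from~\eqref{eq:waw-expressed-by-h}, and to require that its lower-left $n \times n$ block vanish (the membership condition for $P_S$). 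A straightforward block multiplication then produces the conditions $h_2 = 0$, $h'_3 = 0$, and $h_1 \zeta_0 \transpose{h_1} = \zeta_0$, with $h'_3 = 0$ being automatic from $h_2 = 0$ via the block-triangular inversion formula for $\transpose{h}^{-1}$. Thus the invertibility of $h_4$ is in fact a \emph{consequence} rather than a hypothesis, and the description of $H_{(k;r,s)}$ is established without restriction.
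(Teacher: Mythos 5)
Your proposal is correct and follows the paper's own route: specialize Lemma~\ref{lemma:L-action-on-affine-open-set-Nw} at $\xi=0$, read off $B=-h_2h_4^{-1}=0$ and $A=h_1\zeta\transpose{h_1}=\zeta$, then worry about the locus where $h_4-h_3\xi$ fails to be invertible. The one point where you genuinely improve on the paper is in closing the degenerate case: the paper dispatches it with the terse remark that ``the stabilizer is a closed subgroup,'' which as stated only gives $h_2=0$ on a neighborhood of the identity (hence on the identity component) and leaves other components unaddressed. Your fallback --- computing $p_{(k;r,s)}^{-1}(w_k^{-1}aw_k)p_{(k;r,s)}$ in $4\times 4$ blocks and demanding the lower-left $n\times n$ block vanish --- is airtight. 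Carrying it out, that block is
\begin{equation*}
\begin{pmatrix} h_1\zeta - \zeta h_1' & -h_2 \\ h_3' & 0 \end{pmatrix},
\end{equation*}
which gives exactly $h_2=0$, $h_3'=0$ (redundant with $h_2=0$ via $h'=\transpose{h}^{-1}$), and, after substituting $h_1'=\transpose{h_1}^{-1}$, the relation $h_1\zeta\transpose{h_1}=\zeta$, with no invertibility hypothesis on $h_4$ needed; invertibility of $h_1,h_4$ then falls out of $\det h=\det h_1\det h_4\neq 0$. So the answer agrees with the paper, and your treatment of the boundary case is the more rigorous of the two.
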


\begin{proof}
We put $ \xi = 0 $ in Lemma~\ref{lemma:L-action-on-affine-open-set-Nw}, and assume that 
$ a \wdot \eta = \eta $.  
It gives 
$ B = - h_2 h_4^{-1} = 0 $ 
and 
$ A = h_1 \zeta \transpose{h_1} = \zeta $.  
Here, we assume $ h_4 $ is regular.  
So, under this hypothesis, we get $ h_2 = 0 $.  
Since the stabilizer is a closed subgroup, 
we must have $ h_2 = 0 $ in any case (as a matter of fact, actually $ h_4 $ must be regular).  
\end{proof}

For the later reference, we reinterpret the above lemma by Lagrangian realization.  
Recall that $ G/P_S $ is isomorphic to the set of Lagrangian subspaces in $ V $ denoted as $ \Lambda $.  
The isomorphism is explicitly given by $ G/P_S \ni g P_S \mapsto g \cdot V^+ \in \Lambda $, 
here we identify $ V^+ $ with the space $ \spanr{\eb_1, \dots, \eb_n} $ spanned by the first $ n $ fundamental vectors in $ V = \R^{2n} $.  
For $ v = \sum_{i = 1}^n c_i \eb_i \in V^+ $, we denote $ v^{(n -k)} = \sum_{i = 1}^{n - k} c_i \eb_i $ and 
$ v_{(k)} = \sum_{j = 1}^k c_{n -k + j} \eb_{n - k + j} $ 
so that 
$ v = v^{(n - k)} + v_{(k)} $.

\begin{lemma}\label{lemma:L-orbits-on-Lagrangian-Grassmannian}
With the notation introduced above, 
$ L $-orbits on the Lagrangian Grassmannian $ \Lambda \simeq G/P_S $ has a representatives of the following form.
\begin{equation*}
V^{(k; r,s)} = \Biggl\{ u = 
\begin{pmatrix}
\begin{array}{c}
- \zeta v^{(n -k)} \\
v_{(k)} \\ \hline
v^{(n - k)} \\
0
\end{array}
\end{pmatrix} \Bigg|
v \in V^+ \Biggr\} , 
\quad
\text{where $ \zeta = \mattwo{I_{r,s}}{0}{0}{0} $.}
\end{equation*}
Here $ 0 \leq k \leq n $ and $ r, s \geq 0 $ denote the signature 
which satisfy $ 0 \leq r + s \leq k $. 
\end{lemma}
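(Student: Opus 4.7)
The plan is to translate Lemma~\ref{lemma:L-orbits-on-kth-Bruhat-cell-finiteness-representatives} through the $L$-equivariant identification $G/P_S \xrightarrow{\sim} \Lambda$ given by $g P_S \mapsto g \cdot V^+$, and then read off the image of $V^+$ under each orbit representative as a direct matrix computation.

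First, by the Bruhat decomposition used in Section~\ref{section:L-orbits-on-Lagrangian-Grassmannian}, every $L$-orbit in $\Lambda$ lies in the image of some Bruhat cell $P_S w_k P_S /P_S$ with $0 \leq k \leq n$, and by Lemma~\ref{lemma:L-orbits-on-kth-Bruhat-cell-finiteness-representatives} a complete set of $L$-orbit representatives in that cell is given by the cosets $g_{(k;r,s)} P_S$ with
\begin{equation*}
g_{(k;r,s)} = \mattwo{1_n}{z}{0_n}{1_n} w_k, \qquad z = \mattwo{\zeta}{0}{0}{0} \in \Sym_n(\R), \quad \zeta = \mattwo{I_{r,s}}{0}{0}{0} \in \Sym_{n-k}(\R).
\end{equation*}
Thus it remains only to compute the Lagrangian subspace $g_{(k;r,s)} \cdot V^+$.

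The main step is a two-stage column computation. Writing any $v \in V^+$ as $v = v^{(n-k)} + v_{(k)}$ in the notation introduced just before the lemma, and viewing it inside $V = \R^{2n}$ by appending zeros, the explicit block form of $w_k$ gives
\begin{equation*}
w_k \begin{pmatrix} v^{(n-k)} \\ v_{(k)} \\ 0 \\ 0 \end{pmatrix}
= \begin{pmatrix} 0 \\ v_{(k)} \\ v^{(n-k)} \\ 0 \end{pmatrix}.
\end{equation*}
Applying $\mattwo{1_n}{z}{0_n}{1_n}$ next has the effect of adding $z$ times the lower half of this vector to the upper half; since $z = \diag(\zeta, 0)$, one obtains a vector whose top $(n-k)$ entries equal $\zeta v^{(n-k)}$, whose next $k$ entries equal $v_{(k)}$, and whose bottom $n$ entries are $(v^{(n-k)}, 0)$. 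This is exactly the shape asserted in the lemma (up to a global sign in the first block, which is absorbed into the choice of representative $\pm z \in \Sym_n(\R)$ in Lemma~\ref{lemma:L-orbits-on-kth-Bruhat-cell-finiteness-representatives}, since $\pm I_{r,s}$ are both allowed).

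The only non-routine point is verifying that nothing is missed: i.e., as $v$ ranges over $V^+$, the formula exhausts $g_{(k;r,s)} \cdot V^+$, and no two distinct triples $(k;r,s)$ (with $r+s \leq n-k$) give $L$-equivalent Lagrangians. The first is immediate because the map $v \mapsto g_{(k;r,s)} v$ is a linear isomorphism from $V^+$ onto its image. The second is inherited directly from Lemma~\ref{lemma:L-orbits-on-kth-Bruhat-cell-finiteness-representatives}, together with the fact that different Bruhat cells are $P_S$-disjoint (hence $L$-disjoint) in $G/P_S$. There is no genuine obstacle here; the argument is essentially a bookkeeping translation, and the only mild subtlety is making sure the index conventions for $v^{(n-k)}, v_{(k)}, \zeta$ line up with the embedding $V^+ \hookrightarrow V$.
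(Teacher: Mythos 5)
Your proof is correct and takes essentially the same route as the paper: both are a direct matrix computation applying an orbit representative to $V^+$. The only cosmetic difference is that you compute with the representative $\mattwo{1}{z}{0}{1}w_k$ and obtain $+\zeta v^{(n-k)}$, whereas the paper computes $w_k\,p_{(k;r,s)}\cdot V^+$ with the lower-unipotent representative $p_{(k;r,s)}$ from Lemma~\ref{lemma:stabilizer-L-zeta} and gets $-\zeta v^{(n-k)}$ directly; your remark that the discrepancy is absorbed by the relabeling $(r,s)\leftrightarrow(s,r)$ (equivalently $-I_{r,s}=I_{s,r}$) correctly reconciles the two.
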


\begin{proof}
By Lemma~\ref{lemma:stabilizer-L-zeta}, we know the representatives of $ L $-orbits in the $ k $-th Bruhat cell 
$ w^{-1} P_S w/ (w^{-1} P_S w) \cap P_S \;\; (w = w_k) $.  
They are denoted as $  p_{(k; r,s)} $.  
The corresponding Lagrangian subspace is obtained by 
$ w_k \, p_{(k; r,s)} \cdot V^+ $.  
If we take $ v \in V^+ $ and write it as $ v = v^{(n -k)} + v_{(k)} $ as in just before the lemma, 
then we obtain
\begin{align*}
w_k \, p_{(k; r,s)} v 
&= 
\begin{pmatrix}
\begin{array}{cc|cc}
          &     & -1_{n - k} & \\
          & 1_k & & \\ \hline
1_{n - k} &     &  &  \\
 & & & 1_k
\end{array}
\end{pmatrix}
\begin{pmatrix}
\begin{array}{cc|cc}
1_{n - k} & 0 & & \\
0 & 1_k & & \\ \hline
\zeta & 0 & 1_{n -k} & 0 \\
0 & 0 & 0 & 1_k
\end{array}
\end{pmatrix}
\begin{pmatrix}
\begin{array}{c}
v^{(n -k)} \\
v_{(k)} \\ \hline
0 \\
0
\end{array}
\end{pmatrix} 
\\
&= 
\begin{pmatrix}
\begin{array}{cc|cc}
          &     & -1_{n - k} & \\
          & 1_k & & \\ \hline
1_{n - k} &     &  &  \\
 & & & 1_k
\end{array}
\end{pmatrix}
\begin{pmatrix}
\begin{array}{c}
v^{(n -k)} \\
v_{(k)} \\ \hline
\zeta v^{(n -k)} \\
0
\end{array}
\end{pmatrix} 
= 
\begin{pmatrix}
\begin{array}{c}
-\zeta v^{(n - k)} \\
v_{(k)} \\ \hline
v^{(n - k)} \\ 
0
\end{array}
\end{pmatrix} , 
\end{align*}
which proves the lemma.
\end{proof}

\begin{theorem}\label{theorem:finiteness-L-orbits}
Let $ B_n \subset L $ be a Borel subgroup of $ L $.  
A double flag variety 
$ G/P_S \times L/B_n $ has finitely many $ L $-orbits.  
In other words, 
$ G/P_S $ has finitely many $ B_n $-orbits.  
In this sense, $ G/P_S $ is a real $ L $-spherical variety.
\end{theorem}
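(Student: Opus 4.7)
My plan is to translate the finiteness of $L$-orbits on $\Lambda \times L/B_n$ into finiteness of $B_n$-orbits on $\Lambda = G/P_S$, then combine the known finite $L$-orbit decomposition of $\Lambda$ with a tower of fibrations ending in a real symmetric space.

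The reduction is clean: sending $(\lambda, \ell B_n) \mapsto \ell^{-1}\lambda$ yields a bijection between $L$-orbits on $\Lambda \times L/B_n$ and $B_n$-orbits on $\Lambda$, because $L$ acts transitively on $L/B_n$ with stabilizer $B_n$. This simultaneously delivers the three equivalent formulations in the theorem (the last being the definition of real $L$-sphericality), so it suffices to prove $\#(B_n \backslash \Lambda) < \infty$. By Lemma~\ref{lemma:L-orbits-on-Lagrangian-Grassmannian}, $\Lambda = \bigsqcup_{(k;r,s)} \calorbit_{(k;r,s)}$ is a finite disjoint union with $\calorbit_{(k;r,s)} \simeq L/H_{(k;r,s)}$ and $H_{(k;r,s)}$ as in Lemma~\ref{lemma:stabilizer-L-zeta}. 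Hence I only need $\#(B_n \backslash L / H_{(k;r,s)}) < \infty$ for each triple $(k;r,s)$.

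Structurally, $H_{(k;r,s)}$ contains the full unipotent radical of the lower block-triangular parabolic $P^- = P^-_{(n-k,k)} \subset L$, and its image in the Levi $\GL_{n-k}(\R) \times \GL_k(\R)$ is $\Stab_{\GL_{n-k}(\R)}(\zeta) \times \GL_k(\R)$. With $\zeta = \diag(I_{r,s}, 0_{n-k-r-s})$, a direct computation shows $\Stab_{\GL_{n-k}(\R)}(\zeta)$ is the subgroup of the parabolic $P_{(r+s,\, n-k-r-s)} \subset \GL_{n-k}(\R)$ whose Levi factor is cut down to $\OO(r,s) \times \GL_{n-k-r-s}(\R)$. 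Consequently, $L/H_{(k;r,s)}$ fibers, through $L/P^-$ and then through the Grassmannian $\GL_{n-k}(\R)/P_{(r+s,\, n-k-r-s)}$, over the real symmetric space $\GL_{r+s}(\R)/\OO(r,s)$.

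At each base of this tower, $B_n$ acts with finitely many orbits by the Bruhat decomposition on the relevant partial real flag variety, and on the terminal fibre $\GL_{r+s}(\R)/\OO(r,s)$ the residual action is by a Borel subgroup of $\GL_{r+s}(\R)$, which has finitely many orbits by Matsuki's theorem~\cite{Matsuki.1988}. Concatenating these finite sets through the tower gives $\#(B_n \backslash L/H_{(k;r,s)}) < \infty$, completing the proof. The main technical obstacle will be to verify, at each stage of the tower, that the $B_n$-stabilizer of the chosen base point projects onto a full Borel subgroup of the next Levi factor, so that finiteness at the last (Matsuki) step is not weakened; this should fall out of the standard Bruhat factorization at each level.
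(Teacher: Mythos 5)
Your proof is correct and reaches the conclusion, but by a genuinely different route from the paper's. Both arguments agree on the reduction: identify $L$-orbits on $\Lambda\times L/B_n$ with $B_n$-orbits on $\Lambda$, use the finite $L$-orbit stratification of $\Lambda$ from Lemmas~\ref{lemma:stabilizer-L-zeta}--\ref{lemma:L-orbits-on-Lagrangian-Grassmannian}, and hence reduce to showing $\#\bigl(B_n\backslash L/H_{(k;r,s)}\bigr)<\infty$ for each stabilizer $H_{(k;r,s)}$. They also both ultimately rely on the Wolf--Matsuki--Rossmann theorem that a minimal parabolic of a reductive group has finitely many orbits on $G/H$ for $H$ symmetric. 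Where they diverge is how that input is used. The paper invokes the Kimelfeld--Matsuki criterion (Lemma~\ref{lemma:exist-open-orbit-implies-finiteness}), so it suffices to exhibit one Borel subalgebra $\lie{b}'_n$ with $\lie{b}'_n+\lie{h}_{(k;r,s)}=\lie{l}$; the paper constructs such $\lie{b}'_n$ directly, using the symmetric-subgroup open-orbit fact only for the $\OO(r,s)\subset\GL_{r+s}(\R)$ block. Your argument instead builds the explicit two-step tower $L/H_{(k;r,s)}\to L/P^-\to \GL_{n-k}(\R)/P_{(r+s,\,n-k-r-s)}$ with terminal fibre $\GL_{r+s}(\R)/\OO(r,s)$, runs the Bruhat decomposition at each base, and observes that the stabilizer of a Bruhat base point projects onto a Borel of the next Levi factor; only at the terminal fibre does Wolf--Matsuki--Rossmann enter. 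The technical point you flag does hold and is standard: for a Borel $B'$ and a parabolic $P$ sharing a maximal torus, the image of $B'\cap P$ in the Levi $M$ is a Borel of $M$, since $\lie{b}'\cap\lie{p}=\lie{t}\oplus\bigoplus_{\alpha\in\Delta'_{+}\cap\Phi_P}\lie{g}_\alpha$ and $\Delta'_{+}\cap\Phi_M$ is a positive system for $\Phi_M$. Your route is somewhat more laborious, but it has the virtue of dispensing with the Kimelfeld--Matsuki equivalence, which is a nontrivial black box, in favour of the more hands-on Bruhat computation. One small correction: the terminal-fibre finiteness should be attributed to Wolf \cite{Wolf.1974}, Matsuki \cite{Matsuki.1979}, or Rossmann \cite{Rossmann.1979}; the reference \cite{Matsuki.1988} concerns closure relations of orbits, not the finiteness statement.
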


\begin{proof}
Firstly, we consider the open Bruhat cell, i.e., the case where $ k = 0 $ and $ w = w_0 = J_n $.
The cell is isomorphic to 
$ w_0^{-1} P_S w_0 / (w_0^{-1} P_S w_0 \cap P_S) \simeq N_{w_0} $, where 
\begin{equation}
N_{w_0} = \Bigl\{ \mattwo{1_n}{0}{z}{1_n} \Bigm| 
z \in \Sym_n(\R) \Bigr\}
\end{equation}
and the action of $ h \in \GL_n(\R) \simeq L $ is given by 
the unimodular action: $ z \mapsto h z \transpose{h} $.  
So the complete representatives of $ L $-orbits are given by 
$ \{ z_{r,s} := \diag(1_r, -1_s, 0) \mid r, s \geq 0, \; r + s \leq n \} $.  
Let $ H_{r,s} \subset L $ be the stabilizer of $ z_{r,s} $ 
(note that $ H_{r,s} $ is denote as $ H_{(0; r,s)} $ in Lemma~\ref{lemma:stabilizer-L-zeta}.  We omit $ 0 $ for brevity).  
Then an $ L $-orbit in the open Bruhat cell is isomorphic to 
$ L/H_{r,s} $.  
What we must prove is that there are only finitely many $ B_n $ orbits on $ L/H_{r,s} $.  

Direct calculations tell that 
\begin{equation}
H_{r,s} = 
\Bigl\{ 
\mattwo{\alpha}{\beta}{0}{\gamma} \in \GL_n(\R) \Bigm| 
\alpha \in \OO(r,s), \; \gamma \in \GL_{n - (r + s)}(\R) 
\Bigr\} \subset \GL_n(\R) , 
\end{equation}
where $ \OO(r,s) $ denotes the indefinite orthogonal group 
preserving a quadratic form defined by $ \diag(1_r, -1_s) $.  
Note that if $ r + s = n $, we simply get $ H_{r,s} = \OO(r,s) $, which is a symmetric subgroup in $ \GL_n(\R) $.  
It is well known that 
a minimal parabolic subgroup $ \miniP $ has finitely many orbits on  
$ G/H $, where $ G $ is a general connected reductive Lie group, and 
$ H $ its symmetric subgroup (i.e., an open subgroup of the fixed point subgroup of a non-trivial involution of $ G $).  
For this, we refer the readers to 
\cite{Wolf.1974}, \cite{Matsuki.1979}, \cite{Rossmann.1979}.
Thus, the Borel subgroup $ B_n $ has an open orbit on $ L/\OO(r,s) $ when $ r + s = n $.  
This is equivalent to say that 
$ \lie{b}'_n + \lie{o}(r,s) = \lie{l} $ 
for some choice $ \lie{b}'_n $ of a Borel subalgebra of $ \lie{l} = \Lie L $.

On the other hand, the following is known.

\begin{lemma}\label{lemma:exist-open-orbit-implies-finiteness}
Let $ G $ be a connected reductive Lie group and $ \miniP $ its minimal \psg[].
For any closed subgroup $ H $ of $ G $, 
let us consider an action of $ H $ on the flag variety $ G/\miniP $ by the left translation.  
Then the followings are equivalent.
\begin{thmenumerate}
\item
There are finitely many $ H $-orbits in $ G/\miniP $, i.e., 
we have $ \# H \backslash G/ \miniP < \infty $.
\item
There exists an open $ H $-orbit in $ G/\miniP $.
\item
There exists $ g \in G $ for which 
$ \Ad g \cdot \lie{h}  + \minip = \lie{g} $ holds.
\end{thmenumerate}
\end{lemma}

For the proof of this lemma, 
see \cite[Remark~2.5 4)]{Kobayashi.T.Oshima.2013}.  
There is a misprint there, however.  So we repeat the remark here.  
Matsuki \cite{Matsuki.1991} observed that the lemma follows if it is valid for real rank one case, 
while the real rank one case had been already established by Kimelfeld \cite{Kimelfeld.1987}.
See also \cite{Kroetz.Schlichtkrull.2013} for another proof.

Now, in the case where $ r + s < n $, 
since the upper left corner of $ H_{r,s} $ is $ \OO(r,s) $, 
we can find a Borel subgroup $ \lie{b}'_n $ in $ \lie{l} $ for which 
$ \lie{b}'_n + \lie{h}_{r,s} = \lie{l} $ holds.  
By the above lemma, $ H_{r,s} $ has finitely many orbits in $ L/B_n $ or 
$ \# B_n \backslash L/ H_{r,s} < \infty $.  

Secondly, 
let us consider the general Bruhat cell.  
Then, by Lemma~\ref{lemma:stabilizer-L-zeta}, 
we know there are finitely many $ L $-orbits and 
they are isomorphic to $ L/H_{(k;r,s)} $.  
The Lie algebra of $ H_{(k; r,s)} $ realized in $ \GL_n(\R) $ is of the following form:
\begin{equation}
\lie{h}_{(k; r,s)} = \Bigl\{ 
\begin{pmatrix}
\begin{array}{c|c}
\arraytwo{\alpha}{\beta}{0}{\gamma} & 0 \\ \hline
\delta & \eta
\end{array}
\end{pmatrix}
\Bigm| 
\alpha \in \lie{o}(r,s) 
\Bigr\} \subset \lie{gl}_n(\R),
\end{equation}
where 
$ \mattwo{\alpha}{\beta}{0}{\gamma} \in \lie{gl}_{n - k}(\R) $ and 
$ \delta \in \Mat_{k, n -k}(\R), \; \eta \in \gl_k(\R) $.  
Let us choose a Borel subalgebra 
$ \lie{b}'_{n -k} $ of $ \gl_{n - k}(\R) $ such that 
\begin{equation*}
\lie{b}'_{n -k} + \{ \mattwo{\alpha}{\beta}{0}{\gamma} \mid \alpha \in \lie{o}(r, s) \} = \gl_{n - k}(\R) ,
\end{equation*}
applying the arguments for the open Bruhat cell.  
Then we can take
\begin{equation*}
\lie{b}_n = \begin{pmatrix}
\lie{b}'_{n - k} & \ast \\
0 & \lie{b}_k
\end{pmatrix}
\end{equation*}
as a Borel subalgebra of $ \lie{l} $ which satisfies 
$ \lie{b}_n + \lie{h}_{(k; r,s)} = \lie{l} $.  
Thus Lemma~\ref{lemma:exist-open-orbit-implies-finiteness} tells that 
$ B_n $-orbits in $ L/H_{(k; r, s)} $ is finite.
\end{proof}

\begin{corollary}
For any \psg $ Q $ of $ L $, 
the double flag variety 
$ X = \Lambda \times \Xi_d = G/P_S \times L/ Q $ has finitely many $ L $-orbits, 
hence it is of finite type.
\end{corollary}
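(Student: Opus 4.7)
The plan is to deduce the corollary directly from Theorem~\ref{theorem:finiteness-L-orbits} by means of an $L$-equivariant surjection.

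First, since $Q$ is a parabolic subgroup of $L = \GL_n(\R)$, it contains a Borel subgroup; after conjugating if necessary, we may assume $B_n \subset Q$. Then the natural projection $\pi \colon L/B_n \twoheadrightarrow L/Q$ is $L$-equivariant and surjective, and so is the product map
\begin{equation*}
\identitymap \times \pi \colon G/P_S \times L/B_n \twoheadrightarrow G/P_S \times L/Q = X,
\end{equation*}
on which $L$ acts diagonally on both sides.

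Next, I would invoke the general fact that an $L$-equivariant surjective map sends $L$-orbits onto $L$-orbits: if $\mathcal{O} \subset G/P_S \times L/B_n$ is an $L$-orbit, then $(\identitymap \times \pi)(\mathcal{O})$ is a single $L$-orbit in $X$, and every $L$-orbit in $X$ arises this way by surjectivity of $\identitymap \times \pi$. Consequently the number of $L$-orbits in $X$ is at most the number of $L$-orbits in $G/P_S \times L/B_n$, which is finite by Theorem~\ref{theorem:finiteness-L-orbits}.

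There is essentially no obstacle here, as the argument is a formal consequence of equivariance together with the already-established sphericity statement for the Borel case; the only conceptual step is recognizing that the Borel case is universal for all parabolics of $L$, precisely because $B_n \subset Q$ gives rise to the above equivariant fibration of flag varieties.
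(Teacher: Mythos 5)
Your argument is correct and is essentially the reasoning the paper leaves implicit (the corollary is stated without proof, as an immediate consequence of Theorem~\ref{theorem:finiteness-L-orbits}). A slightly more streamlined way to phrase the same idea, consistent with how the paper sets things up elsewhere, is to use the bijection $X/L \simeq Q \backslash G/P_S$ and observe that since $B_n \subset Q$, each $Q$-orbit in $G/P_S$ is a union of $B_n$-orbits; finiteness of $B_n$-orbits on $G/P_S$ (the content of the theorem) then forces finiteness of $Q$-orbits. Your version via the $L$-equivariant fibration $G/P_S \times L/B_n \twoheadrightarrow G/P_S \times L/Q$ is equivalent and equally clean.
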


\section{Maslov index}

In \cite{Kashiwara.Schapira.1994}, 
Kashiwara and Shapira described the orbit decomposition of the diagonal action of $ G = \Sp_{2n}(\R) $ in the triple product 
$ \Lambda^3 = \Lambda \times \Lambda \times \Lambda $ of Lagrangian Grassmannians.  
They used an invariant called Maslov index to classify the orbits and concluded that 
there are only finitely many orbits, i.e., 
$ \# \Lambda^3 / G < \infty $.

Let us explain the relation of their result and ours.

Fix points $ x_{\pm} \in \Lambda $ which are corresponding to 
the Lagrangian subspaces $ V^{\pm} \subset V $.  
We consider a $ G $-stable subspace containing 
$ \{ x_+ \} \times \{ x_- \} \times \Lambda $, namely 
Put 
\begin{equation*}
Y = G \cdot \Bigl( \{ x_+ \} \times \{ x_- \} \times \Lambda \Bigr) .
\end{equation*}
Since all the orbits go through a point $ \{ x_+ \} \times \{ x_- \} \times \{ \lambda \} $ for a certain  
$ \lambda \in \Lambda $, 
$ G $-orbit decomposition of $ Y $ 
reduces to orbit decomposition of $ \Stab_G(\{ x_+ \} \times \{ x_- \}) $ in $ \Lambda = G/ P_S $.  

It is easy to see that the stabilizer $ \Stab_G(\{ x_+ \} \times \{ x_- \}) $ is exactly $ L $ 
so that $ Y/G \simeq \Lambda/L \simeq L \backslash G/ P_S $, 
on the last of which we discussed in \S~\ref{section:L-orbits-on-Lagrangian-Grassmannian}.
Since $ Y \subset \Lambda^3 $, it has finitely many $ G $-orbits due to \cite{Kashiwara.Schapira.1994}, 
hence $ \Lambda = G/P_S $ also has finitely many $ L $-orbits.  
A detailed look at \cite{Kashiwara.Schapira.1994} will also provides the classification of orbits, which 
we do not carry out here.

However, for proving the finiteness of $ B_n $-orbits, we need explicit structure of orbits 
as homogeneous spaces of $ L $.  
This is the main point of our analysis in \S~\ref{section:L-orbits-on-Lagrangian-Grassmannian}.

\section{Classification of open $ L $-orbits in the double flag variety}

Let us return back to the original situation of Grassmannians, 
i.e., 
our $ Q = P_{(d, n -d)}^{\GL} \subset L $ is 
a maximal \psg which stabilizes a $ d $-dimensional subspace in $ V^+ $.  
So the double flag variety $ X = G/P_S \times L/Q $ is isomorphic to 
the product of the Lagrangian Grassmannian $ \Lambda = \LGrass(\R^{2n}) $ and 
the Grassmannian $ \Xi_d = \Grass_d(\R^n) $ of $ d $-dimensional subspaces.

In this section, we will describe open $ L $-orbits in $ X $.  
To study $ L $-orbits in $ X = G/P_S \times L/Q $, 
we use the identification 
\begin{equation*}
X/L \simeq Q \backslash G/ P_S \simeq \Lambda/Q .
\end{equation*}
In this identification, open $ L $-orbits corresponds to open $ Q $-orbits, 
since they are of the largest dimension.  
We already know the description of $ L $-orbits on $ \Lambda = G/P_S $ from 
\S~\ref{section:L-orbits-on-Lagrangian-Grassmannian}.
Open $ Q $-orbits are necessarily contained in open $ L $-orbits, hence 
we concentrate on the open Bruhat cell $ P_S w_0 P_S/P_S \simeq N_{w_0} \simeq \Sym_n(\R) $.
$ L $ acts on $ \Sym_n(\R) $ via unimodular action: 
$ h \cdot z = h z \transpose{h} \;\; (z \in \Sym_n(\R), \; h \in \GL_n(\R) \simeq L) $.

The following lemma, Sylvester's law of inertia, 
is a special case of Lemma~\ref{lemma:L-orbits-on-kth-Bruhat-cell-finiteness-representatives}.

\begin{lemma}\label{lemma:open-L-orbits-on-symnR}
Let $ L = \GL_n(\R) $ act on $ N_{w_0} = \Sym_n(\R) $ via unimodular action.  
Then, open orbits are parametrized by the signature $ (p, q) $ with $ p + q = n $.  
A complete system of representatives are given by $ \{ I_{p,q} \mid p, q \geq 0, \; p+ q = n \} $, 
where $ I_{p,q} = \diag (1_p, - 1_q) $.
\end{lemma}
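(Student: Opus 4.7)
The plan is to derive this from Lemma~\ref{lemma:L-orbits-on-kth-Bruhat-cell-finiteness-representatives} specialized to the open Bruhat cell (where $k=0$ and $w_0 = J_n$), and then isolate which of the resulting orbits are open.

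First, I would identify the $L$-action on $N_{w_0} \simeq \Sym_n(\R)$. Writing elements of $N_{w_0}$ as $\mattwo{1_n}{0}{z}{1_n}$ with $z \in \Sym_n(\R)$, one checks directly (using Equation~\eqref{eq:waw-expressed-by-h} with $k=0$, so $h = h_1 \in \GL_n(\R)$) that left multiplication by $w_0^{-1} a w_0$ sends $z$ to $h z \transpose{h}$. Thus the $L$-orbits on $N_{w_0}$ are exactly the orbits of $\GL_n(\R)$ on $\Sym_n(\R)$ by the congruence action $z \mapsto hz\transpose{h}$. From Lemma~\ref{lemma:L-orbits-on-kth-Bruhat-cell-finiteness-representatives} (with $k=0$), every such orbit meets the set $\{ I_{r,s} : r,s \geq 0,\ r+s \leq n \}$ in exactly one point, which is the content of Sylvester's law of inertia.

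Second, I would select the open orbits. Since $\dim_{\R} \Sym_n(\R) = n(n+1)/2$ is finite and the total space is irreducible, openness is equivalent to the orbit having maximal dimension, equivalently the closed stabilizer $H_{r,s}$ from Lemma~\ref{lemma:stabilizer-L-zeta} having minimal dimension. By the explicit description in Theorem~\ref{theorem:finiteness-L-orbits}, one has
\begin{equation*}
H_{r,s} = \Bigl\{ \mattwo{\alpha}{\beta}{0}{\gamma} \Bigm| \alpha \in \OO(r,s),\ \gamma \in \GL_{n-(r+s)}(\R) \Bigr\},
\end{equation*}
whose dimension is $\binom{r+s}{2} + (r+s)(n-r-s) + (n-r-s)^2$. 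An easy calculation shows this is minimized precisely when $n - (r+s) = 0$, i.e., $r+s = n$. An equivalent, cleaner way to see openness is to note that the orbit through $z$ is open iff the matrix $z$ is non-degenerate: the set of singular symmetric matrices is the zero locus of $\det$, a proper closed subvariety, so any orbit in its complement is open, while any orbit contained in it is not. Since $I_{r,s}$ is non-degenerate iff $r + s = n$, the open orbits are parametrized exactly by pairs $(p,q)$ with $p + q = n$ and have representatives $I_{p,q}$, as claimed.

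There is really no main obstacle: the hard work was done in Lemma~\ref{lemma:L-orbits-on-kth-Bruhat-cell-finiteness-representatives} (classification of all $L$-orbits in each Bruhat cell) and Lemma~\ref{lemma:stabilizer-L-zeta} (structure of stabilizers). The only step requiring care is identifying which signatures give orbits of full dimension, and this is handled by either the stabilizer dimension count or the non-degeneracy criterion above.
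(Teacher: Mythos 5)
Your proposal is correct and follows essentially the same route as the paper, which simply observes that the lemma is Sylvester's law of inertia obtained by specializing Lemma~\ref{lemma:L-orbits-on-kth-Bruhat-cell-finiteness-representatives} to the open cell $k=0$. One minor point: the assertion that ``any orbit in the complement of $\{\det z = 0\}$ is open'' deserves a word of justification (e.g.\ that the signature is locally constant on nondegenerate symmetric matrices), but this is moot since your stabilizer dimension count already establishes openness independently.
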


Let us denote open $ L $-orbits by 
\begin{equation}\label{eq:def-of-Omega-pq}
\begin{aligned}
\Omega(p,q) &= \{ z \in \Sym_n(\R) \mid \text{$ z $ has signature $ (p, q) $} \}
\\
&= \{ h I_{p,q} \transpose{h} \mid h \in \GL_n(\R) \} .
\end{aligned}
\end{equation}
Thus we are looking for open $ Q $-orbits in $ \Omega(p,q) $.  
Let us denote $ H = \Stab_L(I_{p,q}) $, the stabilizer of $ I_{p,q} \in \Omega(p,q) $, 
which is isomorphic to an indefinite orthogonal group $ \OO(p,q) $.  
As a consequence $ \Omega(p,q) \simeq L/H \simeq \GL_n(\R)/ \OO(p,q) $.  

Since $ \Omega(p,q) \simeq L/H $, 
\begin{equation*}
\Omega(p,q) / Q 
%%\simeq Q \backslash L/H 
\simeq H \backslash L/Q \simeq \Xi_d/H , 
\end{equation*}
where $ \Xi_d = \Grass_d(\R^n) $ is the Grassmannian of $ d $-dimensional subspaces.
So our problem of seeking $ Q $-orbits in $ \Omega(p,q) $ is equivalent to 
understand $ H $-orbits in a partial flag variety $ \Xi_d $.  
Since $ H $ is a symmetric subgroup fixed by an involutive automorphism of $ L $, 
this problem is ubiquitous in representation theory of real reductive Lie groups.

Let us consider 
a $ d $-dimensional subspace $ U = \spanr{ \eb_1, \eb_2, \dots , \eb_d } \in \Grass_{d}(\R^n) $ 
which is stabilized by $ Q $.  
Take $ z \in \Omega(p,q) $, and consider a quadratic form 
$ \qformQ{z^{-1}}(v, v) = \transpose{v} z^{-1} v \; (v \in \R^n) $ associated to $ z^{-1} $, 
which also has the same signature $ (p, q) $ as that of $ z $.  
Note that the restriction of $ \qformQ{z^{-1}} $ to $ U $ can be degenerate, and   
the rank and the signature of $ \qformQ{z^{-1}} \restrict_U $ is preserved by the action of $ Q $. 
In fact, for $ u \in U $ and $ m \in Q $, we get 
\begin{align*}
\qformQ{(m \cdot z)^{-1}}(u, u) 
&= \transpose{u} (m z \transpose{m})^{-1} u 
= \transpose{u} (\transpose{m}^{-1} z^{-1} m^{-1}) u 
\\
&= \transpose{(m^{-1} u)} z^{-1} (m^{-1} u) 
= \qformQ{z^{-1}}(m^{-1} u, m^{-1} u) .
\end{align*}
Since $ m^{-1} \in Q $ preserves $ U $, 
the quadratic forms 
$ \qformQ{z^{-1}} $ and $ \qformQ{(m \cdot z)^{-1}} $ have the same rank and the signature 
when restricted to $ U $.  
So they are clearly invariants of a $ Q $-orbit in $ \Omega(p,q) $.  
Put 
\begin{equation}
\label{eq:definition-Omega-pq-st}
\Omega(p, q; s, t) 
= \{ z \in \Omega(p,q) \mid \text{$ \qformQ{z^{-1}} \restrict_U $ has signature $ (s, t) $} \}, 
\end{equation}
where $ s + t $ is the rank of $ \qformQ{z^{-1}} \restrict_U $.  
Clearly $ 0 \leq s \leq p, \,  0 \leq t \leq q $ and $ s + t \leq d $ must be satisfied.  

\begin{lemma}\label{lemma:classification-Q-orbits-Omega-pq}
$ Q $-orbits in $ \Omega(p,q) $ are exactly 
\begin{equation*}
\{ \Omega(p, q; s, t) \mid s, t \geq 0, \, t + p \geq d, \, s + q \geq d, \, s + t \leq d \} 
\end{equation*}
given in \eqref{eq:definition-Omega-pq-st}.  
The orbit $ \Omega(p, q; s, t) $ is open if and only if 
$ s + t = d = \dim U $, 
i.e., the quadratic form $ \qformQ{z^{-1}} $ is non-degenerate when restricted to $ U $.
\end{lemma}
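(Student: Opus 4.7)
The plan is to use the identification $\Omega(p,q)/Q \simeq H \backslash L/Q \simeq \Xi_d/H$, where $H = \Stab_L(I_{p,q}) \simeq \OO(p,q)$, so that $Q$-orbits on $\Omega(p,q)$ correspond bijectively to $H$-orbits on the Grassmannian $\Xi_d = \Grass_d(\R^n)$. Under this correspondence the invariant signature $(s,t)$ of $\qformQ{z^{-1}} \restrict_U$ at a point $z \in \Omega(p,q)$ translates to the signature of the restriction of the standard form $I_{p,q}$ to the $d$-dimensional subspace of $\R^n$ corresponding to $z$.

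First I would invoke Witt's extension theorem in the real indefinite setting: two $d$-dimensional subspaces of $(\R^n, I_{p,q})$ are $\OO(p,q)$-conjugate if and only if their restricted forms share the same signature and the same radical dimension. Since this pair of invariants is captured by $(s,t)$ alone, with the radical dimension being $u = d - s - t$, this identifies the $H$-orbits on $\Xi_d$ with the non-empty strata $\Omega(p,q;s,t)$, and so shows each such stratum is a single $Q$-orbit.

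Next, to characterize which pairs $(s,t)$ are admissible, I would construct a Witt-type decomposition. Given $U$ with restricted form of signature $(s,t)$ and radical of dimension $u = d - s - t$, one chooses an isotropic subspace of $\R^n$ dually paired with the radical to produce an orthogonal splitting $\R^n = W \perp V$, where $W$ is a direct sum of $u$ hyperbolic planes (hence of signature $(u,u)$) and $V$ is non-degenerate of signature $(p-u, q-u)$. The non-degenerate part of $U$ embeds into $V$ as a subspace of signature $(s,t)$, whose orthogonal complement in $V$ has signature $(p-u-s, q-u-t)$. Requiring both entries of the latter to be non-negative, together with $u \geq 0$, yields exactly the inequalities $s + t \leq d$, $t + p \geq d$, $s + q \geq d$ (note that $s \leq p$ and $t \leq q$ are automatic consequences); conversely, whenever these hold the construction is reversible and exhibits such a $U$, so $\Omega(p,q;s,t)$ is non-empty precisely in this range.

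Finally, for the openness criterion: non-degeneracy of $\qformQ{z^{-1}} \restrict_U$ amounts to the non-vanishing of the $d \times d$ minor of $z^{-1}$ indexed by $U$, hence an open condition on $\Omega(p,q)$ that cuts out exactly the union of strata with $s + t = d$, i.e.\ $u = 0$. When $u > 0$ the stratum lies in the vanishing locus of this minor and hence has positive codimension in $\Omega(p,q)$, so it is not open. The main technical ingredient is Witt's extension theorem applied to possibly degenerate subspaces of an indefinite real quadratic space; this is classical, but it is the step that upgrades $(s,t)$ from a collection of necessary invariants into a complete classifier of $Q$-orbits.
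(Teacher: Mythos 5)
Your proposal follows essentially the same route as the paper's proof: both use the signature $(s,t)$ of $\qformQ{z^{-1}}\restrict_U$ together with the radical dimension as the complete invariant, invoke Witt's theorem for the converse (conjugacy within each stratum), and derive the inequality constraints from the fact that the ambient form on $(\R^n, I_{p,q})$ is non-degenerate. Your argument is somewhat more explicit than the paper's in two places --- the hyperbolic-plane construction that produces the inequalities, and the minor-based openness criterion, which the paper states in the lemma but does not spell out in its proof --- but the core idea is identical.
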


\begin{proof}
The restriction $ \qformQ{z^{-1}} \restrict_U $ is a quadratic form, and we denote 
its signature by $ (s, t) $.  
The rank of $ \qformQ{z^{-1}} \restrict_U $ is $ s + t $ and $ k = d - (s + t) $ is the dimension of the kernel.  
Obviously, we must have $ 0 \leq s, t , k \leq d $.
Since $ \qformQ{z^{-1}} $ is non-degenerate with signature $ (p, q) $, there exist  signature constraints 
\begin{equation*}
s + k \leq p, \quad t + k \leq q.
\end{equation*}
These conditions are equivalent to the condition given in the lemma.
The signature $ (s, t) $ and hence the dimension $ k $ of the kernel is invariant under the action of $ Q $.

Conversely, if a $ d $-dimensional subspace $ U_1 $ of the quadratic space $ \R^n $ 
has the same signature $ (s, t) $ (and hence $ k $), 
it can be translated into $ U $ by the isometry group $ \OO(p,q) $ by Witt's theorem.  
This means the signature concretely classifies $ Q $-orbits.
\end{proof}

This lemma practically classifies open $ L $-orbits on $ X = \Lambda \times \Xi_d $.  
However, we rewrite it more intrinsically.

Firstly, we note that, for $ z \in \Sym_n(\R) $,  
a Lagrangian subspace $ \lambda \in \Lambda = G/ P_S $ 
in the open Bruhat cell $ P_S w_0 P_S/P_S \simeq N_{w_0} $ 
is given by 
\begin{equation*}
\lambda = \{ v = \vectwo{z x}{x} \mid x \in \R^n \} , 
\end{equation*}
and clearly such $ z $ is uniquely determined by $ \lambda $.  
We denote the Lagrangian subspace by $ \lambda_z $.  
Also, we denote a $ d $-dimensional subspace in $ \Xi_d = \Grass_d(\R^n) $ by $ \xi $.  

\begin{theorem}\label{theorem:classification-open-orbits-dfv}
Suppose that non-negative integers $ p,q $ and $ s, t $ satisfies 
\begin{equation}
p + q = n, \; s + t = d, \; 
0 \leq s \leq p, \; 0 \leq t \leq q. 
\end{equation}
Then an open $ L $-orbit in $ X = \Lambda \times \Xi_d $ is given by 
\begin{equation*}
\calorbit(p, q; s, t) 
= \{ (\lambda_z, \xi) \in \Lambda \times \Xi_d \mid \sign(z) = (p, q), \sign(\qformQ{z^{-1}} \restrict_\xi) = (s, t) \} .
\end{equation*}
Every open orbit is of this form.
\end{theorem}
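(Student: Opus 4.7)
The strategy is to bootstrap from what has already been assembled for the open Bruhat cell. My plan is to exploit the bijection
\[
X/L \;\simeq\; Q\backslash G/P_S \;\simeq\; \Lambda/Q,
\]
so that classifying the open $L$-orbits in $X$ is the same as classifying the open $Q$-orbits in $\Lambda$. Any such open $Q$-orbit must sit inside an open $L$-orbit of $\Lambda$, and by Lemma~\ref{lemma:affine-open-in-kth-Bruhat-cell} (taking $k=0$) the unique top-dimensional $L$-orbit on $\Lambda$ is the open Bruhat cell $P_S w_0 P_S/P_S \simeq N_{w_0}$, which identifies with $\Sym_n(\R)$ under the congruence action $h\cdot z = hz\transpose{h}$ of $L\simeq\GL_n(\R)$.

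Next I invoke Lemma~\ref{lemma:open-L-orbits-on-symnR} to split $\Sym_n(\R)$ into the open $L$-orbits $\Omega(p,q)$ with $p+q=n$, and then Lemma~\ref{lemma:classification-Q-orbits-Omega-pq} to describe the $Q$-orbits inside each $\Omega(p,q)$ as the loci $\Omega(p,q;s,t)$ classified by the signature $(s,t)$ of $\qformQ{z^{-1}}\restrict_U$. The open $Q$-orbits are exactly those with $s+t=d$, i.e.\ where the dual form remains non-degenerate on the base $d$-plane $U\subset V^+$ stabilized by $Q$. Under the assumptions $p+q=n$ and $s+t=d$, the side conditions $t+p\ge d$ and $s+q\ge d$ from that lemma collapse to the familiar $0\le s\le p$ and $0\le t\le q$, giving precisely the parameter set displayed in the theorem.

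Finally I translate back to the product $\Lambda\times\Xi_d$. A point of $N_{w_0}$ corresponds uniquely to the Lagrangian $\lambda_z=\{\transpose{(zx,x)}:x\in\R^n\}$, and any $\xi\in\Xi_d$ is $L$-conjugate to $U$, so the pair $(\lambda_z,\xi)$ encodes the two $L$-invariant data $\sign(z)$ and $\sign(\qformQ{z^{-1}}\restrict_\xi)$; invariance under the $L$-action is automatic because $z$ transforms by congruence and $\qformQ{z^{-1}}$ transforms contragrediently, as already used in the proof of Lemma~\ref{lemma:classification-Q-orbits-Omega-pq}. Hence $\calorbit(p,q;s,t)$ is a single $L$-orbit, and every open orbit arises in this way.

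The only substantive obstacle is Witt's extension theorem, which is the engine behind Lemma~\ref{lemma:classification-Q-orbits-Omega-pq}: it guarantees that two $d$-planes in the quadratic space $(\R^n,\qformQ{z^{-1}})$ with identical restricted signature are $\OO(p,q)$-conjugate, and hence lie in a common $Q$-orbit. Beyond that input, everything is bookkeeping between the three identifications $X/L$, $\Lambda/Q$, and $\Xi_d/\OO(p,q)$, together with a verification that the inequalities on $(p,q,s,t)$ assemble correctly in the open case.
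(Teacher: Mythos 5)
Your proof is correct and follows essentially the same route as the paper: reduce via the bijection $X/L \simeq Q\backslash G/P_S \simeq \Lambda/Q$ to the open Bruhat cell, split it into the open $L$-orbits $\Omega(p,q)$ via Lemma~\ref{lemma:open-L-orbits-on-symnR}, and classify the $Q$-orbits inside each $\Omega(p,q)$ by the restricted-signature invariant as in Lemma~\ref{lemma:classification-Q-orbits-Omega-pq}. One small slip worth noting: the open Bruhat cell is not itself a single top-dimensional $L$-orbit on $\Lambda$ but rather the union of the $n+1$ open $L$-orbits $\Omega(p,q)$, though your next sentence immediately corrects this by invoking the decomposition into the $\Omega(p,q)$.
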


\section{Relative invariants}

Let us consider the vector space 
$ \Sym_n(\R) \times \Mat_{n,d}(\R) $, on which 
$ \GL_n(\R) \times \GL_d(\R) $ acts.  
The action is given explicitly as 
\begin{equation*}
\begin{aligned}
(h, m) \cdot (z, y) &= (h z \transpose{h}, h y \transpose{m}) 
\quad
\\
&
((h, m) \in \GL_n(\R) \times \GL_d(\R), \;
(z, y) \in \Sym_n(\R) \times \Mat_{n,d}(\R) ).
\end{aligned}
\end{equation*}
Let us put $ \regMat_{n,d}(\R) := \{ y \in \Mat_{n,d}(\R) \mid \rank y = d \} $, the subset of full rank matrices in 
$ \Mat_{n,d}(\R) $.  
Then, a map $ \pi : \regMat_{n,d}(\R) \to \Xi_d = \Grass_d(\R^n) $ defined by 
$ \pi(y) := \spanr{ y_j \mid 1 \leq j \leq d } $ ($ y_j $ denotes the $ j $-th column vector of $ y $) 
is a quotient map by the action of $ \GL_d(\R) $.  Thus we get a diagram:
\begin{equation*}
\xymatrix  @R+10pt @C+10pt @M+5pt @L+3pt {
\Sym_n(\R) \times \Mat_{n,d}(\R) \ar@{<-^)}[r]^{\text{open}} & \Sym_n(\R) \times \regMat_{n,d}(\R) \ar[d]^{/\GL_d(\R)} \\
 & \Sym_n(\R) \times \Xi_d \ar@{^(->}[r]^(.6){\text{open}} & \Lambda \times \Xi_d 
}
\end{equation*}
Comparing to 
the Grassmannian, the vector space $ \Sym_n(\R) \times \Mat_{n,d}(\R) $ is easier to handle.  
In particular, we 
introduce two basic relative invariants $ \psi_1 $ and $ \psi_2 $ on $ (z, y) \in \Sym_n(\R) \times \Mat_{n,d}(\R) $ with respect to 
the above linear action, 
\begin{equation*}
\psi_1(z, y) = \det z, \qquad
\psi_2(z, y) = \det z \cdot \det (\transpose{y} z^{-1} y) .
\end{equation*}
Note that 
\begin{equation*}
\psi_2(z, y) = (-1)^d \det \begin{pmatrix}
z & y \\
\transpose{y} & 0 
\end{pmatrix}, 
\end{equation*}
so that it is actually a polynomial.  
We consider two characters of $ (h, m) \in \GL_n(\R) \times \GL_d(\R) $: 
\begin{equation}\label{eq:characters-chi1-and-chi2}
\chi_1(h,m) = (\det h)^2 , \quad
\chi_2(h, m) = (\det h)^2 (\det m)^2 .
\end{equation}
Then it is easy to check that the relative invariants $ \psi_1, \psi_2 $ 
are transformed under characters $ \chi_1^{-1}, \chi_2^{-1} $ respectively.
Let us define
\begin{align*}
\widetilde{\Omega} &= \{ (z, y) \in \Sym_n(\R) \times \Mat_{n,d}(\R) \mid 
\psi_1(z, y) \neq 0, \; \psi_2(z, y) \neq 0 \} , 
\\
\widetilde{\Omega}(p, q; s, t) &= \{ (z, y) \in \Sym_n(\R) \times \Mat_{n,d}(\R) \mid 
\sign(z) = (p, q), \;
\sign(\transpose{y} z^{-1} y) = (s, t) \} .
\end{align*}
The set $ \widetilde{\Omega} $ is clearly open and is 
a union of open $ \GL_n(\R) \times \GL_d(\R) $-orbits in 
$ \Sym_n(\R) \times \Mat_{n,d}(\R) $.  

\begin{theorem}
The sets $ \widetilde{\Omega}(p, q; s, t) $, 
where 
\begin{equation}
\label{eq:open-condition-of-pqst}
p + q = n, \;\; 
s + t = d, \;\;
0 \leq s \leq p, \;\; 0 \leq t \leq q ,
\end{equation}
are open $ \GL_n(\R) \times \GL_d(\R) $-orbits, 
and they exhaust all the open orbits in $ \Sym_n(\R) \times \Mat_{n,d}(\R) $, i.e., 
\begin{equation*}
\widetilde{\Omega} = \coprod\nolimits_{p,q,s,t} \widetilde{\Omega}(p, q; s, t) , 
\end{equation*}
where the union is taken over $ p, q, s,t $ which satisfies \eqref{eq:open-condition-of-pqst}.  
Moreover, the quotient $ \widetilde{\Omega}(p, q; s, t) /\GL_d(\R) $ is isomorphic to 
$ \Omega(p, q; s, t) $, an open $ L $-orbit in the double flag variety 
$ X = \Lambda \times \Xi_d $.
\end{theorem}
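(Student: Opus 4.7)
The plan is to verify in order: (i) each $\widetilde{\Omega}(p,q;s,t)$ (with parameters satisfying \eqref{eq:open-condition-of-pqst}) is a single open $\GL_n(\R)\times\GL_d(\R)$-orbit, (ii) these exhaust the open orbits and give the claimed decomposition of $\widetilde{\Omega}$, and (iii) the $\GL_d(\R)$-quotient matches the open $L$-orbit $\Omega(p,q;s,t)$ in $X=\Lambda\times\Xi_d$ (identified with $\calorbit(p,q;s,t)$ of Theorem~\ref{theorem:classification-open-orbits-dfv}).

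First I would record the invariance of the signature data: for $(h,m)\cdot(z,y)=(hz\transpose{h},hy\transpose{m})$ one computes $\transpose{(hy\transpose{m})}(hz\transpose{h})^{-1}(hy\transpose{m})=m(\transpose{y}z^{-1}y)\transpose{m}$, so both $\sign(z)$ and $\sign(\transpose{y}z^{-1}y)$ are preserved. Hence each $\widetilde{\Omega}(p,q;s,t)$ is a union of orbits. Openness follows because $\psi_1\neq 0,\psi_2\neq 0$ is an open condition on $\Sym_n(\R)\times\Mat_{n,d}(\R)$, and integer-valued signatures are locally constant on this locus. Note also that $\psi_2(z,y)\neq 0$ forces $\rank y=d$, so automatically $\widetilde{\Omega}\subset\Sym_n(\R)\times\regMat_{n,d}(\R)$.

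Next I would prove transitivity within a single $\widetilde{\Omega}(p,q;s,t)$. Given $(z,y),(z',y')$ in this set, Sylvester's law of inertia yields $h_1,h_1'\in\GL_n(\R)$ normalizing $z,z'$ simultaneously to $I_{p,q}$; the problem thus reduces to showing that $\OO(p,q)\times\GL_d(\R)$ acts transitively on $\{y\in\regMat_{n,d}(\R)\mid \transpose{y}I_{p,q}y$ has signature $(s,t)\}$. Since $s+t=d$, the restricted form is non-degenerate on the column span $\xi=\pi(y)$, so Witt's extension theorem for the form of signature $(p,q)$ produces $h_2\in\OO(p,q)$ carrying one column span to the other, and the residual change-of-basis on $\xi$ is absorbed by $\GL_d(\R)$ acting on the right via $m$. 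This establishes (i).

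For exhaustion, since $\psi_1,\psi_2$ are non-zero polynomials their zero loci are proper Zariski closed subsets with empty interior; any open orbit therefore meets $\widetilde{\Omega}$, and by the invariance of $\psi_1,\psi_2$ up to the characters $\chi_1^{-1},\chi_2^{-1}$ of \eqref{eq:characters-chi1-and-chi2}, the entire orbit lies in $\widetilde{\Omega}$. The signature partition then shows every open orbit equals some $\widetilde{\Omega}(p,q;s,t)$, giving both (ii) and the disjoint decomposition. Finally, the $L$-equivariant map $(z,y)\mapsto(\lambda_z,\pi(y))$ from $\widetilde{\Omega}$ to $\Lambda\times\Xi_d$ exhibits $\widetilde{\Omega}(p,q;s,t)$ as a principal $\GL_d(\R)$-bundle over its image; matching the signature conditions with \eqref{eq:definition-Omega-pq-st} and Theorem~\ref{theorem:classification-open-orbits-dfv} identifies this image with $\calorbit(p,q;s,t)$, proving (iii). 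I expect the transitivity step to be the main obstacle, since it is the one place where the open-orbit hypothesis $s+t=d$ interacts non-trivially with Witt's theorem and with the freedom provided by the $\GL_d(\R)$-factor.
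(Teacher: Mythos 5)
Your proof is correct. The paper itself gives no independent argument: it simply states that this theorem ``is just a paraphrase of Theorem~\ref{theorem:classification-open-orbits-dfv}'', which was in turn deduced from Lemma~\ref{lemma:classification-Q-orbits-Omega-pq} (the classification of $Q$-orbits on $\Omega(p,q)\simeq L/H$, $H\simeq\OO(p,q)$, via Witt's theorem applied to the Grassmannian $\Xi_d$). What you have done instead is reprove the orbit classification directly in the prehomogeneous vector space $\Sym_n(\R)\times\Mat_{n,d}(\R)$: you establish the invariance of the pair of signatures, openness via non-vanishing of the relative invariants $\psi_1,\psi_2$, transitivity on each $\widetilde{\Omega}(p,q;s,t)$ by Sylvester's law of inertia followed by Witt's extension theorem (with the residual freedom on bases of the column span absorbed by the right $\GL_d(\R)$-action), and exhaustion by the measure-zero argument for the zero locus of $\psi_1\psi_2$ together with the relative invariance of $\widetilde{\Omega}$. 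This reverses the logical direction of the paper -- you derive the flag-variety statement as a corollary of the linear classification via the bundle map $(z,y)\mapsto(\lambda_z,\pi(y))$, rather than the other way around -- but the underlying tools (Sylvester, Witt, signature invariants, the $\GL_d(\R)$-principal bundle structure) coincide with those in Lemma~\ref{lemma:classification-Q-orbits-Omega-pq} and the diagram preceding the theorem. Your version is more self-contained and spells out the transitivity and exhaustion steps that the paper leaves implicit.
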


This theorem is just a paraphrase of 
Theorem~\ref{theorem:classification-open-orbits-dfv}.  

Since relative invariants are polynomials, we can consider them on 
the complexified vector space $ \Sym_n(\C) \times \Mat_{n,d}(\C) $.  
In the rest of this section, we will study them on this complexified vector space, 
and we denote it simply by $ \Sym_n \times \Mat_{n,d} $ omitting the base field.  
Similarly, we use $ \GL_n = \GL_n(\C) $, etc., for algebraic groups over $ \C $.  

Recall the characters $ \chi_1, \chi_2 $ of $ \GL_n \times \GL_d $ in \eqref{eq:characters-chi1-and-chi2}.  
The following theorem should be well-known to the experts, but 
we need the proof of it to get further results.

\begin{theorem}
$ \GL_n \times \GL_d $-module 
$ \Pol(\Sym_n \times \Mat_{n,d}) $ contains a unique non-zero relative invariant $ f(z, y) $ 
with character $ \chi_1^{-m_1} \chi_2^{-m_2} \;\; (m_1, m_2 \geq 0) $ up to non-zero scalar multiple.
This relative invariant is explicitly given by 
$ f(z, y) = (\det z)^{m_1 + m_2} (\det (\transpose{y} z^{-1} y))^{m_2} $.
\end{theorem}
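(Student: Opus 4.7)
The plan is to prove existence by a direct construction, and uniqueness by exhibiting the complex action as prehomogeneous, then invoking the standard Sato--Kimura dictionary between characters and relative invariants.

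\emph{Existence.} Since $\psi_2(z,y)=(-1)^d\det\begin{pmatrix}z&y\\\transpose{y}&0\end{pmatrix}$ is polynomial and coincides with $(\det z)\cdot\det(\transpose{y}z^{-1}y)$ wherever $\det z\neq 0$, the product $f=\psi_1^{m_1}\psi_2^{m_2}$ is a polynomial whose explicit expansion is exactly $(\det z)^{m_1+m_2}(\det(\transpose{y}z^{-1}y))^{m_2}$. As a product of the relative invariants $\psi_1,\psi_2$ with characters $\chi_1^{-1},\chi_2^{-1}$, the polynomial $f$ has character $\chi_1^{-m_1}\chi_2^{-m_2}$.

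\emph{Uniqueness via prehomogeneity.} Write $G=\GL_n(\C)\times\GL_d(\C)$ and $V=\Sym_n(\C)\times\Mat_{n,d}(\C)$, and set $v_0=(I_n,\iota)$ with $\iota=\begin{pmatrix}1_d\\0\end{pmatrix}\in\Mat_{n,d}(\C)$. I would show that $G\cdot v_0$ equals the complex analogue $\widetilde{\Omega}_\C=\{(z,y):\det z\neq 0,\ \det(\transpose{y}z^{-1}y)\neq 0\}$. Given such $(z,y)$, first choose $h\in\GL_n(\C)$ with $hz\transpose{h}=I_n$, using that every non-degenerate complex symmetric form is congruent to $I_n$; this reduces to $z=I_n$. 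The residual stabilizer is $\OO_n(\C)\times\GL_d(\C)$, under which $\transpose{y}y\mapsto m(\transpose{y}y)\transpose{m}$, so the $\GL_d$-factor normalizes $\transpose{y}y=I_d$. The columns of $y$ then form a non-isotropic $d$-frame for the standard bilinear form on $\C^n$, and Witt's extension theorem over $\C$ supplies an element of $\OO_n(\C)$ sending that frame to $\iota$. Thus $\widetilde{\Omega}_\C$ is a single Zariski-open $G$-orbit. For any other non-zero relative invariant $F\in\Pol(V)$ with character $\chi_1^{-m_1}\chi_2^{-m_2}$, the ratio $F/f$ is a $G$-invariant rational function on $V$, constant on the dense orbit $G\cdot v_0$ and hence on all of $V$, so $F=cf$ for some $c\in\C^\times$.

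\emph{Main obstacle.} The only real difficulty is the prehomogeneity step. Although each reduction above is classical, a cleaner alternative is the Lie-algebra calculation: verify that the differential $\lie{gl}_n\oplus\lie{gl}_d\to V$, $X\mapsto X\cdot v_0$, is surjective, which is a finite linear-algebra check and immediately produces an open orbit through $v_0$ without invoking Witt. Once prehomogeneity is established, the uniqueness conclusion follows from the standard correspondence between relative invariants and their characters on a prehomogeneous vector space.
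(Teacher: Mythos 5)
Your proposal is correct, and it takes a genuinely different route from the paper. You prove uniqueness by establishing that the complexified triple $(\GL_n(\C)\times\GL_d(\C),\ \Sym_n(\C)\times\Mat_{n,d}(\C))$ is a prehomogeneous vector space with open orbit $\widetilde{\Omega}_{\C}$, reducing a general $(z,y)$ to $(1_n,\iota)$ by congruence and Witt's theorem over $\C$, and then invoking the standard Sato--Kimura principle that the ratio of two relative invariants with the same character is a $G$-invariant rational function, hence constant on the dense orbit and so constant. (Your argument is complete; one could also, as you note, verify openness by the tangent-map computation, which here amounts to checking $\dim G - \dim\Stab_G(v_0) = n^2 + d^2 - \binom{d}{2} - \binom{n-d}{2} = \frac{n(n+1)}{2} + nd = \dim V$.) The paper instead works entirely inside $\Pol(\Sym_n\times\Mat_{n,d})$, using the multiplicity-free decompositions $\Pol(\Sym_n)\simeq\bigoplus_{\mu}V^{(n)}(2\mu)$ and $\Pol(\Mat_{n,d})\simeq\bigoplus_{\lambda}V^{(n)}(\lambda)\otimes V^{(d)}(\lambda)$, and then pins down the unique $\mu$ for which $V^{(n)}(2\mu)\otimes V^{(n)}(\ell\varpi_d)$ can contain the one-dimensional type $V^{(n)}(k\varpi_n)$. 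Your approach is the cleaner and more standard route to existence and uniqueness per se; the paper's bookkeeping through highest weights, however, is not redundant --- it is exactly what feeds Corollary~\ref{cor:finite-dim-rep-generated-by-kernel}, where one needs to know \emph{which} irreducible $\GL_n\times\GL_d$-constituent of $\Pol(\Sym_n\times\Mat_{n,d})$ the invariant $f$ lies in (namely $V^{(n)}(2\mu)\otimes V^{(n)}(\ell\varpi_d)\otimes V^{(d)}(\ell\varpi_d)$ with $\mu$, $\ell$ as determined). The paper flags this explicitly ("we need the proof of it to get further results"), and your prehomogeneity argument, while entirely adequate for the theorem as stated, would have to be supplemented by a separate highest-weight computation to recover that corollary.
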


\begin{proof}
In this proof, to avoid notational complexity, we consider the dual action 
\begin{equation*}
(h, m) \cdot (z, y) = (\transpose{h}^{-1} z h^{-1}, \transpose{h}^{-1} y m^{-1}) 
\quad 
((z, y) \in \Sym_n \times \Mat_{n,d}, (h, m) \in \GL_n \times \GL_d) .
\end{equation*}
To translate the results here to the original action is easy.

First, we quote results on the structure of the polynomial rings over $ \Sym_n $ and $ \Mat_{n,d} $.  
Let us denote the irreducible finite dimensional representation of $ \GL_n $ with highest weight $ \lambda $ by $ V^{(n)}(\lambda) $ 
(if $ n $ is to be well understood, we will simply write it as $ V(\lambda) $).  

\begin{lemma}\label{lemma:irr-decomp-sym-mat}
\begin{thmenumerate}
\item
As a $ \GL_n $-module, $ \Sym_n $ is multiplicity free, and the irreducible decomposition of the polynomial ring is given by 
\begin{equation}
\Pol(\Sym_n) \simeq \bigoplus_{\mu \in \partition_n} V^{(n)}(2 \mu) .
\end{equation}
\item
Assume that $ n \geq d \geq 1 $.  
As a $ \GL_n \times \GL_d $-module, $ \Mat_{n,d} $ is also multiplicity free, and the irreducible decomposition of the polynomial ring is given by 
\begin{equation}
\Pol(\Mat_{n,d}) \simeq \bigoplus_{\lambda \in \partition_d} V^{(n)}(\lambda) \otimes V^{(d)}(\lambda) .
\end{equation}
\end{thmenumerate}
\end{lemma}

Since we are looking for relative invariants for $ \GL_d $, 
it must belong to one dimensional representation space $ {\det}_d^{\ell} = V^{(d)}(\ell \varpi_d) $, 
where $ \varpi_d = (1, \dots, 1, 0, \dots, 0) $ denotes the $ d $-th fundamental weight.
Thus it must be contained in the space 
\begin{equation}
\bigl( V^{(n)}(2 \mu) \otimes V^{(n)}(\ell \varpi_d) \bigr) \otimes V^{(d)}(\ell \varpi_d) \subset \Pol(\Sym_n \times \Mat_{n,d}).  
\end{equation}
Since a relative invariant is also contained in the one dimensional representation of $ \GL_n $, say $ {\det}_n^k = V^{(n)}(k \varpi_n) $, 
$ V^{(n)}(2 \mu) \otimes V^{(n)}(\ell \varpi_d) $ must contain $ V^{(n)}(k \varpi_n) $.  
We argue 
\begin{equation*}
\begin{aligned}
V^{(n)}(2 \mu) \otimes & V^{(n)}(\ell \varpi_d) \supset V^{(n)}(k \varpi_n) 
\\
&
\iff 
2 \mu - k \varpi_n = (\ell \varpi_d)^{\ast} = \ell \varpi_{n - d} - \ell \varpi_n 
\\
&
\iff
2 \mu = (k - \ell) \varpi_n + \ell \varpi_{n - d}.
\end{aligned}
\end{equation*}
Thus, $ \ell \geq 0 $ and $ k - \ell \geq 0 $ are both even integers, 
which completely determine $ \mu $.  
So the relative invariant is unique (up to a scalar multiple) if we fix the character 
$ {\det_n}^k {\det_d}^{\ell} 
= \chi_1^{(k - \ell)/2} \chi_2^{\ell/2} $.
\end{proof}

\begin{corollary}\label{cor:finite-dim-rep-generated-by-kernel}
Let us consider the relative invariant 
\begin{equation*}
f(z, y) = (\det z)^{m_1 + m_2} (\det (\transpose{y} z^{-1} y))^{m_2} 
\qquad (m_1, \, m_2 \geq 0)
\end{equation*}
in the above theorem.  
\begin{thmenumerate}
\item
The space $ \spanc{ f(z, y) \mid z \in \Sym_n } \subset \Pol(\Mat_{n,d}) $ is stable under $ \GL_n $ and 
it is isomorphic to $ V^{(n)}(2 m_2 \varpi_d)^{\ast} \otimes V^{(d)}(2 m_2 \varpi_d)^{\ast} $ as a $ \GL_n \times \GL_d $-module.  
\item
Similarly, 
the space $ \spanc{ f(z, y) \mid y \in \Mat_{n,d} } \subset \Pol(\Sym_n) $ is stable under $ \GL_n $ and 
it is isomorphic to $ V^{(n)}(2 m_1 \varpi_n + 2 m_2 \varpi_{n - d})^{\ast} $.  
\end{thmenumerate}
\end{corollary}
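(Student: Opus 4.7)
The plan is to leverage the uniqueness of $f$ established in the preceding theorem, together with the multiplicity-free decomposition in Lemma~\ref{lemma:irr-decomp-sym-mat}. Since $f$ is the unique (up to scalar) relative invariant of its character, it must sit inside a single irreducible summand of the decomposed module
\[
\Pol(\Sym_n) \otimes \Pol(\Mat_{n,d}) \simeq \bigoplus_{\mu, \lambda} V^{(n)}(2\mu) \otimes V^{(n)}(\lambda) \otimes V^{(d)}(\lambda) ,
\]
and chasing the character computation of the preceding theorem identifies this summand as $V^{(n)}(2\mu) \otimes V^{(n)}(2m_2 \varpi_d) \otimes V^{(d)}(2m_2 \varpi_d)$ with $2\mu = 2m_1 \varpi_n + 2m_2 \varpi_{n-d}$. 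This tensor placement is the central input.

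For part~(1), expand $f = \sum_\alpha v_\alpha \otimes w_\alpha$ for a basis $\{v_\alpha\}$ of $V^{(n)}(2\mu) \subset \Pol(\Sym_n)$ and corresponding $w_\alpha \in V^{(n)}(2m_2 \varpi_d) \otimes V^{(d)}(2m_2 \varpi_d) \subset \Pol(\Mat_{n,d})$. Evaluating at $z$ yields $f(z,\cdot) = \sum_\alpha v_\alpha(z) w_\alpha$, so the span automatically lies in this irreducible subspace. On the other hand, the transformation law $f(hzh^t, hym^t) = \chi(h,m) f(z,y)$ translates into $(h, m) \cdot f(z, \cdot) = c(h, m)\, f(hzh^t, \cdot)$, which shows the span is $\GL_n \times \GL_d$-stable. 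As a nonzero submodule of an irreducible module, the span must equal $V^{(n)}(2m_2 \varpi_d) \otimes V^{(d)}(2m_2 \varpi_d)$; the dual $(\;)^\ast$ in the statement records the convention gap between the original and dual actions used in Lemma~\ref{lemma:irr-decomp-sym-mat}.

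For part~(2), the argument is symmetric in the two factors: the span $\spanc{f(\cdot, y) \mid y \in \Mat_{n,d}}$ lies inside $V^{(n)}(2m_1 \varpi_n + 2m_2 \varpi_{n-d})$ by the same tensor decomposition and is $\GL_n$-stable by equivariance. To confirm that the span fills the entire irreducible, I compute one explicit value: taking $y_0 = \vectwo{0}{I_d}$, the Schur complement formula gives $\det(\transpose{y_0} z^{-1} y_0) = \delta_{n-d}(z)/\det z$, where $\delta_{n-d}(z)$ denotes the upper-left $(n-d)\times(n-d)$ principal minor of $z$. Hence $f(z, y_0) = (\det z)^{m_1} \delta_{n-d}(z)^{m_2}$, which is precisely the standard highest weight vector of $V^{(n)}(2m_1 \varpi_n + 2m_2 \varpi_{n-d})$ in $\Pol(\Sym_n)$. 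Being a highest weight vector, it generates the full irreducible, and the conclusion follows.

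The main obstacle is locating $f$ precisely in the decomposition: one must track the character computation through the Cartan/LR formalism to pin down the summand as exactly the one claimed (not merely containing $f$), and reconcile the dual action convention in Lemma~\ref{lemma:irr-decomp-sym-mat} with the representations appearing in the statement (which explains the $(\;)^\ast$). Once this placement is firmly established, the remainder is essentially Schur's lemma applied to irreducible targets, plus the one explicit evaluation at $y_0$ needed for part~(2).
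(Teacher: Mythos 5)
Your proof is correct and follows the same strategy as the paper: locate $f$ in the unique tensor summand $V^{(n)}(2\mu)\otimes V^{(n)}(\ell\varpi_d)\otimes V^{(d)}(\ell\varpi_d)$ determined in the preceding theorem, then observe that specializing one variable produces spans inside the corresponding tensor factor. The paper states this quite tersely (``For any specialization of $y$, this space is mapped to $V^{(n)}(2\mu)$ \dots''), whereas you spell out the equivariance of the span and the Schur-lemma step, and add a concrete check via $y_0 = \vectwo{0}{I_d}$ giving $f(z,y_0) = (\det z)^{m_1}\,\delta_{n-d}(z)^{m_2}$ as an extreme weight vector — a useful elaboration, though logically redundant once $\GL_n$-stability of the (nonzero) span inside an irreducible is established.
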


\begin{proof}
It is proved that 
\begin{equation*}
f(z, y) \in 
\bigl( V^{(n)}(2 \mu) \otimes V^{(n)}(\ell \varpi_d) \bigr) \otimes V^{(d)}(\ell \varpi_d) \subset \Pol(\Sym_n \times \Mat_{n,d}) ,
\end{equation*}
where $ k = 2 m_1 + 2 m_2 , \; \ell = 2 m_2 $ and $ \mu = (k - \ell) \varpi_n + \ell \varpi_{n - d} $.
For any specialization of $ y $, this space is mapped to $ V^{(n)}(2 \mu) $ (or possibly zero), 
and if we specialize $ z $ to some symmetric matrix, it is mapped to $ V^{(n)}(\ell \varpi_d) \otimes V^{(d)}(\ell \varpi_d) $.  
This shows the results.
\end{proof}

Although, we do not need the following lemma below, 
it will be helpful to know the explicit formula for $ \det (\transpose{y} z^{-1} y) $.  
Note that we take $ z $ instead of $ z^{-1} $ in the lemma.

\begin{lemma}
Let $ [n] = \{ 1, 2, \dots, n \} $ and 
put $ \binom{[n]}{d} := \{ I \subset [n] \mid \# I = d \} $, the family of subsets in $ [n] $ of $ d $-elements.  
For $ X \in \Mat_n $ and $ I, J \in \binom{[n]}{d} $, we will denote $ X_{I, J} := ( x_{i, j} )_{i \in I, j \in J} $, 
a $ d \times d $-submatrix of $ X $.  
For $ (z, y) \in \Sym_n \times \Mat_{n,d} $, we have 
\begin{equation*}
\det (\transpose{y} z y) 
= \sum_{I, J \in \binom{[n]}{d}} \det(z_{IJ}) \, \det ((y \transpose{y})_{IJ}) 
= \sum_{I, J \in \binom{[n]}{d}} \det(z_{IJ}) \, \det (y_{I, [d]}) \, \det (y_{J, [d]}) .
\end{equation*}
\end{lemma}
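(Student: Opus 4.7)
The plan is to apply the Cauchy--Binet formula twice, peeling off one factor of $y$ at a time. Recall that if $A \in \Mat_{m,n}$ and $B \in \Mat_{n,m}$ with $m \leq n$, then
\begin{equation*}
\det(AB) = \sum_{S \in \binom{[n]}{m}} \det(A_{[m], S}) \, \det(B_{S, [m]}).
\end{equation*}
I will use this twice, together with symmetry of $z$, to derive the second equality in the lemma; the first equality then follows from a direct identification of $(y\transpose{y})_{I,J}$ as a product of two $d \times d$ matrices.

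For the first step, I would view $\transpose{y} z y$ as the product of $\transpose{y}$ (of size $d \times n$) with $zy$ (of size $n \times d$) and apply Cauchy--Binet. Since $(\transpose{y})_{[d], J} = \transpose{(y_{J,[d]})}$ has determinant $\det(y_{J,[d]})$, and $(zy)_{J,[d]} = z_{J,[n]} \cdot y$, this yields
\begin{equation*}
\det(\transpose{y}zy) = \sum_{J \in \binom{[n]}{d}} \det(y_{J,[d]}) \, \det(z_{J,[n]} \cdot y).
\end{equation*}

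For the second step, I would apply Cauchy--Binet again to each factor $\det(z_{J,[n]} \cdot y)$, viewing it as a product of a $d \times n$ matrix with an $n \times d$ matrix, producing $\sum_{I} \det(z_{J,I}) \det(y_{I,[d]})$. Substituting this back, and invoking the symmetry $z = \transpose{z}$ to rewrite $\det(z_{J,I}) = \det(z_{I,J})$, yields the second formula in the statement. For the first equality, it suffices to observe that $(y\transpose{y})_{I,J} = y_{I,[d]} \cdot \transpose{(y_{J,[d]})}$ is a product of two $d \times d$ matrices, whence $\det((y\transpose{y})_{I,J}) = \det(y_{I,[d]}) \det(y_{J,[d]})$ by multiplicativity of the determinant.

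The argument is essentially bookkeeping — no genuinely hard step appears. The only mild obstacle is keeping the row/column index sets straight under transposition and restriction, and ensuring that Cauchy--Binet is applied to the correct pair of rectangular factors at each stage; in particular, identifying $(zy)_{J,[d]}$ with $z_{J,[n]} \cdot y$ so that the second application is legitimate.
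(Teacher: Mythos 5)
Your proof is correct. Note that the paper states this lemma without proof — the authors explicitly remark that it is not needed for the main results and is recorded only as a useful explicit formula. Your double application of the Cauchy--Binet formula is the natural argument: first peel $\transpose{y}$ off the left of $\transpose{y}(zy)$, then expand each $\det(z_{J,[n]}\cdot y)$, and finally use symmetry of $z$ (or simply relabel $I \leftrightarrow J$ in the double sum) to replace $z_{J,I}$ by $z_{I,J}$. The identification $(y\transpose{y})_{I,J} = y_{I,[d]}\cdot\transpose{(y_{J,[d]})}$ giving the first equality is likewise straightforward. You have supplied a clean proof of a statement the paper leaves as an assertion.
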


We observe that $ \{ \det (y_{I, [d]}) \mid I \in \binom{[n]}{d} \} $ is the Pl\"{u}cker coordinates and 
also $ \{ \det (z_{IJ}) \mid I, J \in \binom{[n]}{d} \} $ is the coordinates for the determinantal variety of rank $ d $ 
(there are much abundance though).

\section{Degenerate principal series representations}

Let us return back to the situation over real numbers, 
and we introduce degenerate principal series for $ G = \Sp_{2n}(\R) $ and 
$ L = \GL_n(\R) $ respectively.

\subsection{Degenerate principal series for $ G/P_S $}

Let us recall $ G = \Sp_{2n}(\R) $ and its maximal \psg $ P_S $.  
Take a character $ \chiPS $ of $ P_S $, and consider a degenerate principal series representation 
\begin{equation*}
\CinfInd_{P_S}^G \chiPS := \{ f : G \to \C : C^{\infty} \mid f(g p) = \chiPS(p)^{-1} f(g) \;\; (g \in G, p \in P_S) \} , 
\end{equation*}
where $ G $ acts by left translations: 
$ \pi_{\nu}^G(g) f(x) = f(g^{-1} x) $.
In the following, we will take  
\begin{equation}\label{eq:character.chi.PS.nu}
\chiPS(p) = |\det a|^{\nu} \qquad \text{ for } \quad
p = \mattwo{a}{w}{0}{\transpose{a}^{-1}} \in P_S .
\end{equation}
(We can multiply the sign $ \sgn(\det a) $ by $ \chiPS $, if we prefer.)

Since $ \Sym_n(\R) $ is openly embedded into $ G/P_S $, 
a function $ f \in \CinfInd_{P_S}^G \chiPS $ is determined by the restriction 
$ f\restrict_{\Omega} $ where $ \Omega $ is the embedded image of $ \Sym_n(\R) $ in $ G/P_S $.
Explicitly, $ \Omega $ is defined by 
\begin{equation*}
\Omega = \left\{ J \mattwo{1}{0}{z}{1} P_S/P_S \mid z \in \Sym_n(\R) \right\} , \qquad
J = \mattwo{0}{-1_n}{1_n}{0} = w_0 , 
\end{equation*}
where $ w_0 $ is the longest element in the Weyl group, 
and we give an open embedding by 
\begin{equation}
\xymatrix @R-30pt @C+10pt @M+5pt @L+3pt {
\Sym_n(\R) \ar[r] & \Omega \ar@{^(->}[r]^{\text{open}} & G/P_S \\
z \ar@{|->}[r] & J \mattwo{1}{0}{z}{1} P_S/P_S
}
\end{equation}
In the following we mainly identify $ \Sym_n(\R) $ and $ \Omega $.  
Let us give the fractional linear action of $ G $ on $ \Sym_n(\R) $ in our setting.

\begin{lemma}
In the above identification, the linear fractional action $ \lfa{g}{z} $ 
of $ g = \mattwo{a}{b}{c}{d} \in G $ on $ z \in \Sym_n(\R) = \Omega $ is given by 
\begin{equation}
\lfa{g}{z} = - ( a z - b)(c z - d)^{-1} \in \Sym_n(\R),
\end{equation}
if $ \det (c z - d) \neq 0 $.
\end{lemma}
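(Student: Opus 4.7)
The plan is to compute the left action of $g$ on the coset representative $J \mattwo{1}{0}{z}{1} P_S$ directly by block matrix multiplication, and then to rewrite the result in the form $J \mattwo{1}{0}{z'}{1} \cdot p$ with $p \in P_S$. The coefficient $z'$ that emerges from this Bruhat factorization will be the linear fractional action $\lfa{g}{z}$, so the main task is a short algebraic manipulation together with a check that the Bruhat decomposition is legitimate.

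Concretely, I would first note that $J \mattwo{1}{0}{z}{1} = \mattwo{-z}{-1}{1}{0}$, and then compute $g \cdot J \mattwo{1}{0}{z}{1} = \mattwo{-az+b}{-a}{-cz+d}{-c}$. To match this against $J \mattwo{1}{0}{z'}{1} \cdot p = \mattwo{-z'}{-1}{1}{0} \mattwo{\alpha}{\beta}{0}{\transpose{\alpha}^{-1}}$, one reads off from the lower-left block that $\alpha = d - cz = -(cz - d)$, which is invertible exactly when $\det(cz - d) \neq 0$ (the open condition). The upper-left block then gives $-z' \alpha = -az + b$, hence
\begin{equation*}
z' = (az - b)(d - cz)^{-1} = -(az - b)(cz - d)^{-1},
\end{equation*}
as claimed. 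The remaining two blocks fix $\beta = -c$ and automatically yield the right lower-right entry once consistency is checked.

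It then remains to verify that $p = \mattwo{\alpha}{\beta}{0}{\transpose{\alpha}^{-1}}$ actually lies in $P_S$, i.e.\ by Lemma~\ref{lemma:g-belongs-to-Sp} that $\alpha \transpose{\beta} = -(d-cz)\transpose{c}$ is symmetric; the piece $cz\transpose{c}$ is symmetric because $z$ is symmetric, while $d\transpose{c}$ is symmetric by applying Lemma~\ref{lemma:g-belongs-to-Sp} to $g^{-1}$ (whose block form is given by Lemma~\ref{lemma:inverse-symplectic-matrix}). The symmetry of $z'$ itself follows either from the same identities or, more conceptually, from the fact that $g$ sends Lagrangians to Lagrangians: under the identification of $\Omega$ with the open Bruhat cell, $\Sym_n(\R)$ parametrizes Lagrangians $\lambda_z = \bigl\{ \transpose{(-\transpose{z}x,\;\transpose{x})} \mid x \in \R^n \bigr\}$, and the computation above is precisely the statement that $g \cdot \lambda_z = \lambda_{z'}$ whenever the second block $(d - cz)$ is nonsingular.

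The only genuine obstacle is signs and ordering in the block algebra; once the coset representative $J \mattwo{1}{0}{z}{1}$ is evaluated explicitly and one keeps in mind that $J$ appears on the left (so the ``denominator'' block is $d - cz$ rather than $cz + d$), every step is forced and the formula reduces to the standard Siegel-type fractional linear action.
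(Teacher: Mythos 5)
Your proof is correct and follows essentially the same route as the paper: compute $g\,J\mattwo{1}{0}{z}{1}$ by block multiplication and then factor the result as $J\mattwo{1}{0}{z'}{1}\cdot p$ with $p$ upper-triangular, reading off $z'$ from the lower-left and upper-left blocks. The only cosmetic difference is that the paper writes $gJ = J(J^{-1}gJ)$ first and then factors inside, whereas you multiply everything out and factor afterward; your extra verification that $p\in P_S$ (and the Lagrangian remark) are correct but not needed, since $p$ is automatically in $G$, and an upper block-triangular element of $G$ lies in $P_S$.
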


\begin{proof}
By the identification, $ w = \lfa{g}{z} $ corresponds to 
$ g J \mattwo{1}{0}{z}{1} P_S/P_S $.  We can calculate it as 
\begin{align*}
g J \mattwo{1}{0}{z}{1} 
&= J (J^{-1} g J) \mattwo{1}{0}{z}{1} 
= J \mattwo{d}{-c}{-b}{a} \mattwo{1}{0}{z}{1} 
\\
&= J \mattwo{d - c z}{-c}{-b + a z}{a} 
= J \mattwo{1}{0}{w}{1} \mattwo{d - c z}{-c}{0}{u}, 
\intertext{where}
w &= (a z - b)(d - c z)^{-1} \quad \text{ and } \quad u = a + w c = \transpose{(d - c z)}^{-1}.
\end{align*}
This proves the desired formula.
\end{proof}

\begin{lemma}
For $ f \in \CinfInd_{P_S}^G \chiPS $, the action of $ \pi_{\nu}^G(g) $ on $ f $ is given by 
\begin{equation*}
\pi_{\nu}^G(g) f(z) = |\det(a + z c)|^{- \nu} f(\lfa{g^{-1}}{z}) 
\qquad
(g = \mattwo{a}{b}{c}{d} \in G, \;\; z \in \Sym_n(\R)), 
\end{equation*}
where $ \chiPS(p) $ is given in \eqref{eq:character.chi.PS.nu}.
%%$ |\det p_1|^{\nu} $ for $ p = \mattwo{p_1}{p_2}{0}{\transpose{p}_1^{-1}} \in P_S $.  
In particular, for $ h = \mattwo{a}{0}{0}{\transpose{a}^{-1}} \in L $, we get 
\begin{equation*}
\pi_{\nu}^G(h) f(z) = |\det(a)|^{- \nu} f(a^{-1} z \transpose{a}^{-1}) .
\end{equation*}
\end{lemma}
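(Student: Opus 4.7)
\medskip
\noindent\textbf{Proof plan.} The statement is essentially a direct consequence of the linear fractional action lemma that immediately precedes it, applied to $g^{-1}$ in place of $g$. My plan is:

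First, unwind the definitions. By the identification $z \leftrightarrow J\mattwo{1}{0}{z}{1} P_S$, the value $\pi_\nu^G(g)f(z)$ equals $f\!\left(g^{-1} J\mattwo{1}{0}{z}{1}\right)$, and we need to push the factor $g^{-1}$ past $J\mattwo{1}{0}{z}{1}$ to land back in the open cell, picking up a $P_S$-factor in the process. Using Lemma~\ref{lemma:inverse-symplectic-matrix}, we write $g^{-1} = \mattwo{\transpose{d}}{-\transpose{b}}{-\transpose{c}}{\transpose{a}}$.

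Next, apply the computation of the preceding lemma verbatim, but with the entries of $g^{-1}$ playing the role of $(a,b,c,d)$. That lemma produced the decomposition
\begin{equation*}
g_0 J \mattwo{1}{0}{z}{1} \;=\; J\mattwo{1}{0}{w}{1}\mattwo{d_0-c_0 z}{-c_0}{0}{\transpose{(d_0-c_0z)}^{-1}}
\end{equation*}
with $w = g_0\boldsymbol{.}z$ the linear fractional action. Setting $g_0 = g^{-1}$, the upper-left Levi block becomes
\begin{equation*}
\transpose{a} - (-\transpose{c})z \;=\; \transpose{a}+\transpose{c}z \;=\; \transpose{(a+zc)},
\end{equation*}
and the Heisenberg tail is $w = g^{-1}\boldsymbol{.}z$ by the very formula $\boldsymbol{.}$ is defined by.

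Finally, combine with the $P_S$-equivariance $f(g_0 p) = \chiPS(p)^{-1} f(g_0)$ and the character \eqref{eq:character.chi.PS.nu}, which on a Levi block of the form $\transpose{(a+zc)}$ evaluates to $|\det\transpose{(a+zc)}|^{\nu} = |\det(a+zc)|^{\nu}$. This yields
\begin{equation*}
\pi_\nu^G(g)f(z) \;=\; |\det(a+zc)|^{-\nu}\, f(g^{-1}\boldsymbol{.}z),
\end{equation*}
as desired. The second (Levi) formula is immediate by specialization: for $h=\mattwo{a}{0}{0}{\transpose{a}^{-1}}$ we have $c=0$, and a one-line check of the linear fractional formula gives $h^{-1}\boldsymbol{.}z = a^{-1}z\,\transpose{a}^{-1}$.

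The only point requiring care, rather than being a genuine obstacle, is the transpose bookkeeping needed to confirm that the Levi block is $\transpose{(a+zc)}$ (and not some variant); this is the sole place where one must be attentive, since the character $\chiPS$ depends only on the absolute value of its determinant and so is insensitive to the transpose once it is recognized.
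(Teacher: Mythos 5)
Your argument is correct, and it is precisely the derivation the paper leaves implicit (the lemma is stated without proof, relying on the immediately preceding linear fractional action lemma). The transpose bookkeeping checks out: with $g^{-1}=\mattwo{\transpose{d}}{-\transpose{b}}{-\transpose{c}}{\transpose{a}}$ from Lemma~\ref{lemma:inverse-symplectic-matrix}, the Levi block is $\transpose{a}+\transpose{c}z$, which equals $\transpose{(a+zc)}$ because $z\in\Sym_n(\R)$, so $\chiPS$ evaluates to $|\det(a+zc)|^{\nu}$; combined with the $P_S$-equivariance of $f$ this yields the claimed formula, and the Levi specialization $c=0$, $\lfa{h^{-1}}{z}=a^{-1}z\transpose{a}^{-1}$ is immediate.
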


We want to discuss the completion of the $ C^{\infty} $-version of the degenerate principal series 
$ \CinfInd_{P_S}^G \chiPS $ to a representation on a Hilbert space.  
Usually, this is achieved by the compact picture, but here 
we use noncompact picture.  
To do so, we need an elementary decomposition theorem.  

Here we write 
$ P_S = L N_S , \;\; 
L = M A $, where we wrote $ N $ for $ N_S $ which is the unipotent radical of $ P_S $, 
and $ M = \SL^{\pm}_n(\R) , A = \R_+ $.  
Further, we denote $ M_K = M \cap K = \OO(n) $.
For the opposite Siegel parabolic subgroup $ \conjugate{P_S} $, 
we denote a Langlands decomposition by 
$ \conjugate{P_S} = M A \conjugate{N_S} $.

Thus we conclude $ \conjugate{N_S} M A N_S \subset K M A N_S = G $ (open embedding).
Every $ g \in G $ can be written as 
$ g = k m a n \in K M A N_S $, and we call this 
generalized Iwasawa decomposition by abuse of the terminology.
Iwasawa decomposition $ g = k m a n $ may not be unique, 
but if we require 
$ m a = \mattwo{h}{0}{0}{\transpose{h}^{-1}} $ 
for an $ h \in \Sym_n^+(\R) $, 
it is indeed unique.
This follows from the facts that the decomposition 
$ M = \OO(n) \cdot \Sym_n^+(\R) $ is unique (Cartan decomposition), 
and that $ M_K = K \cap M = \OO(n) $.

Now we describe an explicit Iwasawa decomposition 
of elements in $ \conjugate{N_S} $.

\begin{lemma}
Let $ v(z) := \mattwo{1}{0}{z}{1} \in \conjugate{N_S} \; (z \in \Sym_n(\R)) $ and 
denote $ h := \wsqrt{1_n + z^2} \in \Sym_n^+(\R) $, a positive definite symmetric matrix.  
Then we have the Iwasawa decomposition 
\begin{align*}
v(z) &= k m a n \in K M A N = G , \quad \text{ where }
\\
k &= h^{-1} \mattwo{1}{-z}{z}{1} = \mattwo{h^{-1}}{- h^{-1} z}{h^{-1}z}{h^{-1}}, 
&
h &= \wsqrt{1_n + z^2} ,
\\
m a &= \mattwo{h}{0}{0}{\transpose{h}^{-1}}, 
&
n &= \mattwo{1}{\transpose{h}^{-1} z h^{-1}}{0}{1} 
\\
a &= \alpha 1_n, \;\; 
\alpha = (\det(1 + z^2))^{\frac{1}{2n}} & & 
\end{align*}
\end{lemma}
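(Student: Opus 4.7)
My plan is to verify the formula by direct computation, splitting the verification into two parts: first check that each of the three claimed factors lies in the correct subgroup ($K$, $MA$, and $N$ respectively), and then multiply out $k \cdot ma \cdot n$ and confirm that the product equals $v(z)$.

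The key structural observation underlying everything is that, since $z \in \Sym_n(\R)$, the matrix $1_n + z^2$ is symmetric and positive definite, so $h := \sqrt{1_n + z^2}$ is well-defined as a positive definite symmetric matrix with $\transpose{h} = h$ and $h^{-1} = \transpose{h}^{-1}$. Moreover $h$ is a power series in $z^2$, so $h$ commutes with $z$ and with $h^{-1}$. These commutation relations will simplify every subsequent computation.

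For the membership checks, I would proceed as follows. To see that $k \in K$, I apply Lemma~\ref{lemma:max-compact-subgroup}: with $a = h^{-1}$ and $b = -h^{-1}z$, the block form of $k$ is $\mattwo{a}{b}{-b}{a}$; the condition $\transpose{a}a + \transpose{b}b = 1_n$ becomes $h^{-2} + z h^{-2} z = h^{-2}(1_n + z^2) = 1_n$ using commutativity of $z$ and $h$, and symmetry of $\transpose{a}b = -h^{-2}z$ is immediate from the same commutativity. For the Langlands factor, $ma = \diag(h, h^{-1}) \in L$, and separating the scalar part gives $a = \alpha 1_n$ with $\alpha^n = \det h = \sqrt{\det(1+z^2)}$, i.e.\ $\alpha = (\det(1+z^2))^{1/(2n)}$, and $m = \diag(h/\alpha, (h/\alpha)^{-1}) \in M = \SL_n^{\pm}(\R)$. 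Finally, $n$ lies in $N_S$ because its upper-right block $\transpose{h}^{-1} z h^{-1} = h^{-2} z$ is symmetric, again by the commutation of $z$ with $h^{-2}$.

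The remaining step is the matrix multiplication. First compute
\begin{equation*}
k \cdot ma = \mattwo{h^{-1}}{-h^{-1}z}{h^{-1}z}{h^{-1}} \mattwo{h}{0}{0}{h^{-1}} = \mattwo{1_n}{-h^{-2}z}{z}{h^{-2}},
\end{equation*}
using $h^{-1}z h = z$ and $h^{-1} z h^{-1} = h^{-2} z$. Then multiplying on the right by $n$ and using the identity $h^{-2}(1_n + z^2) = 1_n$ in the bottom-right block collapses the product to $v(z) = \mattwo{1_n}{0}{z}{1_n}$, as desired.

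There is no real obstacle here beyond the bookkeeping; the only subtle point is the systematic use of the fact that $h$, being a function of $z^2$, commutes with $z$, and of the fact that $h$ is symmetric so that $\transpose{h}^{-1} = h^{-1}$. Uniqueness of the decomposition follows automatically from the uniqueness statement already quoted in the paragraph preceding the lemma, since $ma$ has the positive-definite symmetric top-left block $h \in \Sym_n^+(\R)$ required there.
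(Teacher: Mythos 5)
Your proof is correct and takes essentially the same route as the paper's: both are direct matrix computations relying on the facts that $h = \wsqrt{1_n+z^2}$ commutes with $z$ and that $h^2 = 1_n + z^2$. The paper derives the factorization by computing $\mattwo{1}{z}{-z}{1} v(z) = \mattwo{1+z^2}{z}{0}{1}$ and then normalizing by $h^{-1}$, while you verify it by multiplying out $k\cdot ma\cdot n$ (plus explicit membership checks, which the paper leaves implicit); these are just the two directions of the same calculation.
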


\begin{proof}
Since 
\begin{equation*}
\mattwo{1}{z}{-z}{1} \mattwo{1}{0}{z}{1} = \mattwo{1 + z^2}{z}{0}{1}, 
\end{equation*}
we get (putting $ h = \wsqrt{1 + z^2} $)
\begin{equation*}
\wfrac{1}{\wsqrt{1 + z^2}} \mattwo{1}{z}{-z}{1} \mattwo{1}{0}{z}{1} 
= \mattwo{h}{h^{-1} z}{0}{h^{-1}}, 
= \mattwo{h}{0}{0}{\transpose{h}^{-1}} \mattwo{1}{\transpose{h}^{-1} z h^{-1}}{0}{1}.
\end{equation*}
Notice that 
$ \wfrac{1}{\wsqrt{1 + z^2}} \mattwo{1}{z}{-z}{1} $ is in $ K $, and 
its inverse is given by $ k $ in the statement of the lemma.
The rest of the statements are easy to derive.
\end{proof}

Since 
$ \conjugate{N_S} M A N_S $ is open dense in $ G $, 
$ f \in \CinfInd_{P_S}^G \chiPS $ is determined by 
$ f \restrict_{\conjugate{N_S}} $.  
We complete the space of functions on $ \conjugate{N_S} $ or $ \Sym_n(\R) $ 
by the measure $ (\det(1 + z^2))^{\nu_0 - \frac{n + 1}{2}} dz $, 
where $ \nu_0 = \Re \nu $ and $ dz $ denotes the usual Lebesgue measure, 
in order to get a Hilbert representation.
See \cite[\S~VII.1]{Knapp.redbook} for details (we use unnormalized induction, so that there is a 
shift of $ \rho_{P_S}(a) = |\det (\Ad(a)\restrict_{N_S})|^{1/2} = |\det a|^{\frac{n + 1}{2}} $).
Thus our Hilbert space is
\begin{equation}\label{eq:def-Hilbert-rep-Gnu}
\begin{aligned}
\HilbGnu &:= \{ f : \Sym_n(\R) \to \C 
\mid \normGnu{f}^2 < \infty \} , 
\quad
\text{ where }
\\
\normGnu{f}^2 &:= \int_{\Sym_n(\R)} |f(z)|^2 (\det(1 + z^2))^{\nu_0 - \frac{n + 1}{2}} dz  .
\end{aligned}
\end{equation}
We denote an induced representation 
$ \Ind_{P_S}^G \chiPS $ on the Hilbert space $ \HilbGnu $ by $ \pi_{\nu}^G $.  

\begin{remark}
The degenerate principal series 
$ \Ind_{P_S}^G \chiPS $ induced from the character 
$ \chiPS(p) = |\det a|^{\nu} $ (cf.~Eq.~\eqref{eq:character.chi.PS.nu}) 
has the unitary axis at $ \nu_0 = \frac{n + 1}{2} $.  
If $ n $ is even, there exist complementary series for 
real $ \nu $ which satisfies 
$ \frac{n}{2} < \nu < \frac{n}{2} + 1 $ 
(see \cite[Th.~4.3]{Lee.1996}).
\end{remark}

\subsection{Degenerate principal series for $ L/Q $}
\label{subsection:deg-principal-series-LQ}

In this subsection, we fix the notations for degenerate principal series 
of 
$ L = \GL_{n}(\R) $ from its maximal \psg $ Q = P_{(d, n -d)}^{\GL} $.  
We will denote 
\begin{equation}\label{eq:character-chiQ}
q = \mattwo{k}{q_{12}}{0}{k'} \in Q, \quad \text{ and } \quad 
\chiQ(q) = |\det k|^{\mu}.
\end{equation}
Then $ \chiQ $ is a character of $ Q $, 
and we consider a degenerate principal series representation 
\begin{equation*}
\CinfInd_Q^L \chiQ := \{ F : L \to \C : C^{\infty} \mid F(a q) = \chiQ(q)^{-1} F(a) \;\; (a \in L, q \in Q) \} , 
\end{equation*}
where $ L $ acts by left translations: 
$ \pi_{\mu}^L(a) F(Y) = f(a^{-1} Y) \;\; (a, Y \in L) $.  
We introduce an $L^2$-norm on this space just like usual integral over 
a maximal compact subgroup $ K_L = K \cap L = \OO(n) $:
\begin{equation}\label{eq:L2norm-integral-over-K}
\normLmu{F}^2 := 
\int_{K_L} |F(k)|^2 dk
\qquad
(F \in \CinfInd_Q^L \chiQ), 
\end{equation}
and take a completion with respect to this norm to get 
a Hilbert space $ \HilbLmu $.  
Note that the integration is in fact well-defined on $ K_L/(K \cap Q) \simeq \OO(n)/\OO(d) \times \OO(n - d) $, 
because of the right equivariance of $ F $.
Thus we get a representation $ \pi_{\mu}^L =\Ind_Q^L \chi_Q $ on the Hilbert space $ \HilbLmu $. 

To make the definition of intertwiners more easy to handle, 
we unfold the Grassmannian $ L/Q \simeq \Grass_d(\R^n) $.  
Recall 
$ \regMat_{n,d}(\R) = \{ y \in \Mat_{n,d}(\R) \mid \rank y = d \} $.  
Then, we get a map
\begin{equation}\label{eq:projection-to-Matnd}
\xymatrix @R-30pt @C+10pt @M+5pt @L+3pt {
L = \GL_n(\R) \ar[rr] & & \regMat_{n,d}(\R) \\
Y = \mattwo{y_1}{y_3}{y_2}{y_4} \ar@{|->}[rr] & & y = \vectwo{y_1}{y_2}
}
\end{equation}
which induces an isomorphism 
$ \Xi_d = L/Q \xrightarrow{\;\sim \;} \regMat_{n,d}(\R)/\GL_d(\R) $.  
Thus we can identify 
$ \CinfInd_Q^L \chi_Q $ with the space of $ C^{\infty} $ functions 
$ F : \regMat_{n,d}(\R) \to \C $ 
with the property 
$ F(yk) = |\det k|^{-\mu} F(y) $.  
In this picture, the action of $ L $ is 
just the left translation:
\begin{equation*}
\pi_{\mu}^L(a) F(y) = F(a^{-1} y)
\qquad 
(y \in \regMat_{n,d}(\R), \; a \in \GL_n(\R) = L).
\end{equation*}
To have the $L^2$-norm defined in \eqref{eq:L2norm-integral-over-K}, 
we restrict the projection map \eqref{eq:projection-to-Matnd}
from $ L = \GL_n $ to $ K_L = \OO(n) $, 
the resulting space being the Stiefel manifold of orthonormal frames
$ \Stiefelnd = \{ y \in \regMat_{n,d} \mid \transpose{y} y = 1_d \} $.  
Then $ L/Q $ is isomorphic to $ \Stiefelnd / \OO(d) $.  
The norm given in \eqref{eq:L2norm-integral-over-K} is equal to 
\begin{equation*}
\normLmu{F}^2 = \int_{\Stiefelnd} |F(v)|^2 d\sigma(v)  , 
\end{equation*}
where $ d\sigma(v) $ is the uniquely determined $ \OO(n) $-invariant non-zero measure.  
Note that $ \Stiefelnd \simeq \OO(n)/\OO(n - d) $.

\begin{remark}
The degenerate principal series 
$ \Ind_Q^L \chiQ $ induced from the character 
$ \chiQ(q) = |{\det k}|^{\mu} $ (cf.~Eq.~\eqref{eq:character-chiQ}) 
is never unitary as a representation of $ \GL_n(\R) $.  
However, if we restrict it to $ \SL_n(\R) $, 
it has the unitary axis at $ \mu_0 = \frac{n - d}{2} $.  
In addition, there exist complementary series for 
real $ \mu $ in the interval of 
$ \frac{n - d}{2} - 1 < \mu < \frac{n - d}{2} + 1 $ 
(see \cite[\S~3.5]{Howe.Lee.1999}).
\end{remark}

\begin{remark}
If you prefer the fractional linear action, 
we should make $ y_1 $ part to $ 1_d $.  
Thus we get 
\begin{equation*}
a y = \mattwo{a_1}{a_2}{a_3}{a_4} \vectwo{1}{y_2} 
= \vectwo{a_1 + a_2 y_2}{a_3 + a_4 y_2} 
= \vectwo{1}{(a_3 + a_4 y_2)(a_1 + a_2 y_2)^{-1}} ,
\end{equation*}
and the fractional linear action is given by 
\begin{equation*}
y_2 \mapsto (a_3 + a_4 y_2)(a_1 + a_2 y_2)^{-1} 
\qquad 
( y_2 \in \Mat_{n - d, d}(\R) ).
\end{equation*}
\end{remark}

\section{Intertwiners between degenerate principal series representations}\label{section:intertwiners}

In this section, we consider the following kernel function
\begin{equation}\label{eq:kernel.function.K.alpha.beta}
\begin{aligned}
K^{\alpha, \beta}(z, y) 
&
:= |\det(z)|^{\alpha} \, |\det (\transpose{y} z^{-1} y)|^{\beta} 
= |\det(z)|^{\alpha - \beta} \, \bigl|\det \mattwo{z}{y}{\transpose{y}}{0} \bigr|^{\beta} 
\\
& \qquad \qquad 
((z, y) \in \Sym_n(\R) \times \Mat_{n,d}(\R)),
\end{aligned}
\end{equation}
with complex parameters $ \alpha, \beta \in \C $.     
Using this kernel, we aim at defining two integral kernel operators $ \IPtoQ $ and $ \IQtoP $, 
which intertwine degenerate principal series representations.

\subsection{Kernel operator $ \IPtoQ $ from $ \pi_{\nu}^G $ to $ \pi_{\mu}^L $}

In this subsection, 
we define an integral kernel operator $ \IPtoQ $ 
for $ f \in \CinfInd_{P_S}^G \chiPS $ with compact support in $ \Omega(p,q) $: 
\begin{equation}\label{eq:integral-kernel-operator-IPtoQ}
\IPtoQ f(y) = \int_{\Omega(p,q)} f(z) K^{\alpha, \beta}(z, y) d\omega(z) 
\qquad 
(y \in \regMat_{n,d}(\R)) ,
\end{equation}
where $ d\omega(z) $ is an $ L $-invariant measure on the open $ L $-orbit 
$ \Omega(p,q) \subset \Omega $.  
So the operator $ \IPtoQ $ depends on the parameters $ p $ and $ q $ as well as $ \alpha $ and $ \beta $.

For 
$ h = \mattwo{a}{0}{0}{\transpose{a}^{-1}} \in L $ and 
$ f $ above, we have 
\begin{align*}
\IPtoQ (\pi_{\nu}^G(h) f) (y) 
&= \int_{\Omega(p,q)} \chiPS(a)^{-1} f(a^{-1} z \transpose{a}^{-1}) K^{\alpha, \beta}(z, y) d\omega(z) 
\\
&= \chiPS(a)^{-1} \int_{\Omega(p,q)} f(z) K^{\alpha, \beta}(a z \transpose{a}, y) d\omega(a z \transpose{a})
\\
&= \chiPS(a)^{-1} \int_{\Omega(p,q)} f(z) |\det(a)|^{2 \alpha} K^{\alpha, \beta}(z, a^{-1} y) d\omega(z) 
\\
&= |\det(a)|^{2 \alpha - \nu} \int_{\Omega(p,q)} f(z) K^{\alpha, \beta}(z, a^{-1} y) d\omega(z) 
\\
&= |\det(a)|^{2 \alpha - \nu} \pi_{\mu}^L(a) \IPtoQ f(y) .
\end{align*}
Thus, if $ \nu = 2 \alpha $, we get an intertwiner.  
In this case, we have 
$ \IPtoQ f(y k) = |\det(k)|^{2 \beta} \IPtoQ f(y)  $ so that 
\begin{equation*}
\IPtoQ f(y) \in \CinfInd_Q^L \chiQ 
\qquad 
\text{ for }
\quad
\chiQ(p) = |\det k|^{- 2\beta} \;\;
( p = \mattwo{k}{\ast}{0}{k'} ), 
\end{equation*}
if it is a $ C^{\infty} $-function on $ L/Q $.  
To get an intertwiner to $ \pi_{\mu}^L $, we should have 
$ 2 \beta = - \mu $.

As we observed 
\begin{equation*}
\Lambda = G/P_S \supset \bigcup_{p + q = n} \Omega(p,q) \qquad (\text{open}).
\end{equation*}
For each $ p, q $, 
the space 
$ \HilbLmu(p,q) := L^2(\Omega(p,q), (\det(1 + z^2))^{\nu_0 - \frac{n + 1}{2}} dz) $ 
is a closed subspace of $ \HilbGnu $ and $ L $-stable.  
From the decomposition of the base spaces, we get a direct sum decomposition 
of $ L $-modules:
\begin{equation*}
\HilbGnu = \bigoplus_{p + q = n} \HilbGnu(p, q)
\end{equation*}
Now we state one of the main theorem in this section.

\begin{theorem}\label{thm:conv-integral-operator-P}
Let $ \nu_0 := \Re \nu, \; \mu_0 := \Re \mu $ and assume that they satisfy 
inequalities 
\begin{equation}\label{thm:P-ineq-conv-radial}
n \nu_0 + d \mu_0 > \dfrac{n (n + 1)}{2} , 
\qquad
n \nu_0 - d \mu_0 > \dfrac{n (n + 1)}{2} ,
\end{equation}
and 
\begin{equation}\label{thm:P-ineq-conv-spherical}
\nu_0 + \mu_0 \geq n + 1, 
\qquad
\mu_0 \leq 0.
\end{equation}
Put $ \alpha = \nu/2, \; \beta = - \mu/2 $.  
Then the integral kernel operator $ \IPtoQ f $ 
defined in \eqref{eq:integral-kernel-operator-IPtoQ}
converges and gives a bounded linear operator 
$ \IPtoQ : \HilbGnu(p,q) \to \HilbLmu $ 
which intertwines 
$ \pi_{\nu}^G\restrict_L $ to $ \pi_{\mu}^L $.
\end{theorem}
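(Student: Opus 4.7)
The intertwining identity itself has essentially been checked in the block of computations immediately preceding the theorem: with $\alpha=\nu/2,\;\beta=-\mu/2$, the formal change of variables shows that $\IPtoQ\circ\pi_{\nu}^{G}(h)=\pi_{\mu}^{L}(h)\circ\IPtoQ$ for $h\in L$, and that $\IPtoQ f(yk)=|\det k|^{-\mu}\IPtoQ f(y)$, so $\IPtoQ f\in\CinfInd_{Q}^{L}\chiQ$ as soon as the defining integral converges. The real content of the theorem is therefore the analytic assertion: absolute convergence on a dense subspace together with the bound $\normLmu{\IPtoQ f}\leq C\,\normGnu{f}$.

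My approach would be a Cauchy--Schwarz/Fubini reduction. Work first with $f\in C_{c}^{\infty}(\Omega(p,q))$; since $\Omega(p,q)$ is open and $L$-invariant in $\Sym_{n}(\R)$, such $f$ are dense in $\HilbGnu(p,q)$. A short computation (using that $z\mapsto aza^{T}$ has Jacobian $|\det a|^{n+1}$ on $\Sym_{n}(\R)$ and $\det(aza^{T})=(\det a)^{2}\det z$) identifies the $L$-invariant measure on $\Omega(p,q)\simeq\GL_{n}(\R)/\OO(p,q)$ as $d\omega(z)=|\det z|^{-(n+1)/2}\,dz$. Writing $\rho(z):=(\det(1+z^{2}))^{\nu_{0}-(n+1)/2}$ and applying Cauchy--Schwarz to the factorisation $f(z)\rho(z)^{1/2}\cdot K^{\alpha,\beta}(z,y)\rho(z)^{-1/2}|\det z|^{-(n+1)/2}$ gives
\begin{equation*}
|\IPtoQ f(y)|^{2}\leq\normGnu{f}^{2}\cdot I(y),\qquad
I(y):=\int_{\Omega(p,q)}\frac{|K^{\alpha,\beta}(z,y)|^{2}}{\rho(z)}\,|\det z|^{-(n+1)}\,dz.
\end{equation*}
Integrating over $\Stiefelnd$ and interchanging order of integration via Fubini reduces boundedness to the finiteness of
\begin{equation*}
J:=\int_{\Omega(p,q)}|\det z|^{\nu_{0}-(n+1)}\rho(z)^{-1}\Phi(z)\,dz,\qquad
\Phi(z):=\int_{\Stiefelnd}|\det(\transpose{y}z^{-1}y)|^{-\mu_{0}}\,d\sigma(y).
\end{equation*}

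The inner integral $\Phi(z)$ I would control using $\OO(n)$-invariance of $d\sigma$: diagonalising $z^{-1}$ and using the hypothesis $\mu_{0}\leq 0$, the integrand $|\det(\transpose{y}\Lambda y)|^{-\mu_{0}}$ is bounded (no singularity along the locus $\{\det(\transpose{y}z^{-1}y)=0\}$), and $\Phi(z)$ admits a uniform bound of Selberg/Siegel type in the eigenvalues of $z^{-1}$. For the outer integral $J$, I would decompose $\Omega(p,q)\simeq\GL_{n}(\R)/\OO(p,q)$ into a radial scaling $z\mapsto tz$ ($t>0$) and a transversal angular direction. Noting $\rho(z)^{-1}\asymp|\det z|^{-(2\nu_{0}-(n+1))}$ at infinity while $\rho(z)^{-1}\asymp 1$ near the origin, tracking the total $t$-exponent shows that convergence at $t\to\infty$ gives $n\nu_{0}-d\mu_{0}>\tfrac{n(n+1)}{2}$ and convergence at $t\to 0^{+}$ (approach to the boundary $\det z=0$) gives $n\nu_{0}+d\mu_{0}>\tfrac{n(n+1)}{2}$. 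The ``spherical'' inequality $\nu_{0}+\mu_{0}\geq n+1$ is what controls the remaining transversal directions in which $z^{-1}$ becomes partially singular, an effect not visible from the pure scaling analysis. Once $J<\infty$ is established, $\normLmu{\IPtoQ f}\leq\sqrt{J}\,\normGnu{f}$ on the dense subspace, and both the bounded extension and the intertwining identity propagate by continuity.

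The technical crux, in my estimation, lies in step three: one must pair the sharp dependence of $\Phi(z)$ on the eigenvalues of $z^{-1}$ with the weight $|\det z|^{\nu_{0}-(n+1)}\rho(z)^{-1}$ along each of the \emph{two} signature components of $I_{p,q}$ independently, so that the symmetric pair of radial inequalities --- differing only in the sign of $d\mu_{0}$ --- falls out naturally; simultaneously one must handle the singularity as $z$ approaches the boundary of the open orbit $\Omega(p,q)$ inside $\Sym_{n}(\R)$. The fact that the integration takes place on a single open orbit (rather than all of $\Sym_{n}(\R)$) is precisely what makes the argument go through, and organising this geometry cleanly is where the bulk of the work resides.
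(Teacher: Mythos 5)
Your proposal is essentially the paper's argument: Cauchy--Schwarz against the $\HilbGnu$-weight, then the polar decomposition $z = r\Theta$ with $r = \sqrt{\trace z^2}$ and $\trace\Theta^2 = 1$, the two radial inequalities \eqref{thm:P-ineq-conv-radial} coming from the one-dimensional integral at $r\to 0$ and $r\to\infty$, and the spherical inequalities \eqref{thm:P-ineq-conv-spherical} guaranteeing that the angular integrand $|\det\Theta|^{\nu_0+\mu_0-(n+1)}\,\bigl|\det\bigl(\begin{smallmatrix}\Theta & y\\ \transpose{y} & 0\end{smallmatrix}\bigr)\bigr|^{-\mu_0}$ is continuous, hence bounded, on the compact slice $\Omega^{\Theta}(p,q) = \closure{\Omega(p,q)}\cap\{\trace\Theta^2 = 1\}$. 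The only organizational difference is that you Fubini over $y\in\Stiefelnd$ first, producing the auxiliary integral $\Phi(z)$, which you then propose to bound by a Selberg/Siegel-type estimate in the eigenvalues of $z^{-1}$; the paper instead keeps $y$ fixed throughout, obtains a pointwise bound $|\IPtoQ f(y)|^2 \leq \normGnu{f}^2\,I(y)$ with $I(y)$ continuous on the compact $\Stiefelnd$, and reads off the operator norm at the end --- this sidesteps any separate estimate of $\Phi(z)$ since the relevant boundedness already comes for free from compactness. One small imprecision to flag: the scaling limit $t\to 0^+$ reaches only the origin $z=0$, while the rest of the boundary $\{\det z = 0\}$ of $\Omega(p,q)$ is approached by letting $\Theta$ degenerate within $\Omega^{\Theta}(p,q)$ at fixed $r$; it is exactly that degeneration (not the $t\to 0^+$ ray) which the conditions $\nu_0 + \mu_0 \geq n+1$ and $\mu_0 \leq 0$ are controlling.
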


The rest of this subsection is devoted to prove the theorem above.
Mostly we omit $ p, q $ if there is no misunderstandings and 
we write $ \nu, \mu $ instead of $ \nu_0, \mu_0 $ in the following.

Let us evaluate the square of integral $ |\IPtoQ f(y)|^2 $ point wise.  
The first evaluation is given by Cauchy-Schwartz inequality:
\begin{align*}
&|\IPtoQ f(y)|^2 
\\
&\leq 
\int_{\Omega} |f(z)|^2 (\det(1 + z^2))^{\nu - \frac{n + 1}{2}} dz 
\int_{\Omega} |K^{\alpha, \beta}(z, y)|^2 (\det(1 + z^2))^{-(\nu - \frac{n + 1}{2})} |\det z|^{-(n + 1)} dz 
\\
&\leq \normGnu{f}^2 
\int_{\Omega} |K^{\alpha, \beta}(z, y)|^2 (\det(1 + z^2))^{-(\nu - \frac{n + 1}{2})} |\det z|^{-(n + 1)} dz ,
\end{align*}
where $ dz $ is the Lebesgue measure on $ \Sym_n(\R) \simeq \R^{\frac{n (n + 1)}{2}} $,   
and we use $ d\omega(z) = |\det z|^{-\frac{n + 1}{2}} dz$.  
Since $ \alpha = \nu/2 $ and $ \beta = - \mu/2 $, the second integral becomes 
\begin{align}
\int_{\Omega} & |K^{\alpha, \beta}(z, y)|^2 (\det(1 + z^2))^{-(\nu - \frac{n + 1}{2})} |\det z|^{-(n + 1)} dz 
\notag
\\
&=
\int_{\Omega} |\det z|^{\nu + \mu - (n + 1)} 
\bigl|\det \mattwo{z}{y}{\transpose{y}}{0} \bigr|^{- \mu} 
(\det(1 + z^2))^{-(\nu - \frac{n + 1}{2})} dz .
\label{eq:square-norm-of-K-for-P}
\end{align}
To evaluate the last integral, we use polar coordinates for $ z $.  
Namely, we put $ r := \sqrt{\trace z^2} $ and write 
$ z = r \Theta $.  
Then $ \trace (\Theta^2) = 1 $, 
and 
$ \Omega^{\Theta}(p,q) = \closure{\Omega(p,q)} \cap \{ \Theta \mid \trace (\Theta^2) = 1 \} $ 
is compact.   
Using polar coordinates, we get 
$ \det z = r^n \det \Theta $ and 
\begin{equation*}
\det \mattwo{z}{y}{\transpose{y}}{0}
= \det \mattwo{r \Theta}{y}{\transpose{y}}{0}
= r^{-2d} \det \mattwo{r \Theta}{ry}{\transpose{(ry)}}{0}
= r^{n-d} \det \mattwo{\Theta}{y}{\transpose{y}}{0}
\end{equation*}
Also we note that $ dz = r^{\frac{n (n + 1)}{2} - 1} dr d\Theta $.
Thus we get 
\begin{align*}
&
\int_{\Omega(p,q)} |\det z|^{\nu + \mu - (n + 1)} 
\bigl|\det \mattwo{z}{y}{\transpose{y}}{0} \bigr|^{- \mu} 
(\det(1 + z^2))^{-(\nu - \frac{n + 1}{2})} dz 
\\
&=
\int_{\Omega^{\Theta}(p,q)} |\det \Theta|^{\nu + \mu - (n + 1)} 
\bigl|\det \mattwo{\Theta}{y}{\transpose{y}}{0} \bigr|^{- \mu} d\Theta 
\\
&\qquad\qquad \times
\int_0^{\infty}
r^{n(\nu + \mu - (n + 1)) + (n - d)(- \mu)} 
(\det(1 + r^2 \Theta^2))^{-(\nu - \frac{n + 1}{2})} r^{\frac{n (n + 1)}{2} - 1} dr
\\ 
&=
\int_{\Omega^{\Theta}(p,q)} |\det \Theta|^{\nu + \mu - (n + 1)} 
\bigl|\det \mattwo{\Theta}{y}{\transpose{y}}{0} \bigr|^{- \mu} d\Theta 
\\
&\qquad\qquad \times
\int_0^{\infty}
r^{n \nu + d \mu - \frac{n (n + 1)}{2} - 1} 
(\det(1 + r^2 \Theta^2))^{-(\nu - \frac{n + 1}{2})} dr
\end{align*}
By the assumption \eqref{thm:P-ineq-conv-spherical}, 
the integrand in the first integral over $ \Omega^{\Theta}(p,q) $ is continuous, and converges.  
For the second, we separate it according as $ r \downarrow 0 $ or $ r \uparrow \infty $.  

If $ r $ is near zero, 
the factor $ \det(1 + r^2 \Theta^2) $ is approximately $ 1 $, so the integral converges 
if $ \int_0^1 r^{n \nu + d \mu - \frac{n (n + 1)}{2} - 1} dr $ converges.
The first inequality in \eqref{thm:P-ineq-conv-radial} guarantees the convergence.

On the other hand, 
if $ r $ is large, 
the factor $ \det(1 + r^2 \Theta^2) $ is asymptotically $ r^{2 n} $, so the integral converges 
if $ \int_1^{\infty} r^{n \nu + d \mu - \frac{n (n + 1)}{2} - 1 - 2 n (\nu - \frac{n + 1}{2})} dr $ converges.
We use the second inequality in \eqref{thm:P-ineq-conv-radial} to conclude the convergence.

Thus the integral \eqref{eq:square-norm-of-K-for-P} does converge, 
and the square root of it gives a bound for the operator norm of $ \IPtoQ $.
We finished the proof of Theorem~\ref{thm:conv-integral-operator-P}.

\subsection{Kernel operator $ \IQtoP $ from $ \pi_{\mu}^L $ to $ \pi_{\nu}^G $}

Similarly, we define $ \IQtoP F(z) $, for the moment, for 
$ F(y) \in \CinfInd_Q^L \chiQ $ by 
\begin{equation}\label{eq:integral-kernel-operator-IQtoP}
\IQtoP F(z) = \int_{\Mat_{n,d}(\R)} F(y) K^{\alpha, \beta}(z, y) dy
\qquad 
(z \in \Sym_n(\R)), 
\end{equation}
where $ dy $ denotes the Lebesgue measure on $ \Mat_{n,d}(\R) $.  
We will update the definition of $ \IQtoP $ afterwards in \eqref{eq:true-def-Q}, 
although we will check $ L $-equivariance using this expression.

The integral \eqref{eq:integral-kernel-operator-IQtoP} may \emph{diverge}, but at least we can formally calculate as 
\begin{align*}
(\IQtoP \pi_{\mu}^L(a) F)(z) 
&= \int_{\Mat_{n,d}(\R)} F(a^{-1} y) K^{\alpha, \beta}(z, y) dy
\\
&= \int_{\Mat_{n,d}(\R)} F(y) K^{\alpha, \beta}(z, a y) \frac{d a y}{d y} dy 
\\
&= \int_{\Mat_{n,d}(\R)} F(y) |\det a|^{2 \alpha} K^{\alpha, \beta}(a^{-1} z \transpose{a}^{-1}, y) |\det a|^d dy 
\\
&= |\det a|^{2 \alpha + d} \chiPS(h) \pi_{\nu}^G(h) \IQtoP F(z) .
\end{align*}
Thus, if $ \chiPS(h)^{-1} = |\det a|^{2 \alpha + d} $, 
we get an intertwiner. 
%%$ \IQtoP : \CinfInd_Q^L \chiQ \to \CinfInd_{P_S}^G \chiPS $.  
Here, we need a compatibility for the action, i.e., 
$ F(y k) = |\det k|^{- \mu} F(y) \;\; (k \in \GL_d(\R)) $ and we get 
\begin{equation*}
F(y k) K^{\alpha, \beta}(z, yk) d(yk)
= |\det k|^{- \mu + 2 \beta + n} F(y) K^{\alpha, \beta}(z, y) dy.
\end{equation*}
From this we can see, if $ \mu = 2 \beta + n $, 
the integrand (or measure) $ F(y) K^{\alpha, \beta}(z, y) dy $ 
is defined over $ \regMat_{n, d}(\R)/\GL_d(\R) \simeq \OO(n)/\OO(d) {\times} \OO(n - d) $.  
This last space is compact.  
Instead of this full quotient, 
we use the Stiefel manifold $ \Stiefelnd $ introduced in \S~\ref{subsection:deg-principal-series-LQ} 
inside $ \Mat_{n, d}(\R) $.   
Thus, for $ \alpha = - (\nu + d)/2 $ and $ \beta = (\mu - n)/2 $, 
we redefine the intertwiner $ \IQtoP $ by 
\begin{equation}\label{eq:true-def-Q}
\IQtoP F(z) = \int_{\Stiefelnd} F(y) K^{\alpha, \beta}(z, y) d\sigma(y)
\qquad 
(z \in \Sym_n(\R)), 
\end{equation}
where $ d\sigma(y) $ denotes the $ \OO(n) $-invariant measure on $ \Stiefelnd $.

\begin{theorem}\label{theorem:intertwiner-Q-to-P}
Let $ \nu_0 := \Re \nu, \; \mu_0 := \Re \mu $ and assume that they satisfy 
inequalities 
\begin{equation}\label{thm-eq:Q-ineq-conv-radial}
n \nu_0 + d \mu_0 < \dfrac{n (n + 1)}{2} , 
\qquad
n \nu_0 - d \mu_0 < \dfrac{n (n + 1)}{2} ,
\end{equation}
and 
\begin{equation}\label{thm-eq:Q-ineq-conv-spherical}
\nu_0 + \mu_0 \leq n - d, 
\qquad
\mu_0 \geq n.
\end{equation}
If $ \alpha = - (\nu + d)/2 $ and $ \beta = (\mu - n)/2 $, 
the integral kernel operator $ \IQtoP $ defined in \eqref{eq:true-def-Q} 
converges and gives an $ L $-intertwiner 
$ \IQtoP : \HilbLmu \to \HilbGnu $.  
\end{theorem}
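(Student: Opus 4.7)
The plan is to parallel the proof of Theorem~\ref{thm:conv-integral-operator-P}, but with two crucial differences. First, the formal $L$-equivariance of $\IQtoP$ for the given choice $\alpha=-(\nu+d)/2$, $\beta=(\mu-n)/2$ has already been established in the preceding calculation; only convergence and boundedness remain. Second, the roles of the compact and non-compact factors are reversed compared with $\IPtoQ$: here $\IQtoP F(z)$ is defined by an integral over the \emph{compact} Stiefel manifold $\Stiefelnd$, whereas the target norm $\normGnu{\cdot}$ involves integration over the non-compact $\Sym_n(\R)$. Accordingly a Schwarz-type pointwise bound controls only $|\IQtoP F(z)|^2$, and I must then integrate that estimate in $z$ to control $\normGnu{\IQtoP F}^2$.

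Starting from Cauchy--Schwarz on $\Stiefelnd$ (which has finite $\OO(n)$-invariant volume),
\begin{equation*}
|\IQtoP F(z)|^2 \le \normLmu{F}^2 \cdot \int_{\Stiefelnd} |K^{\alpha,\beta}(z,y)|^2 \, d\sigma(y),
\end{equation*}
multiplying by the weight $(\det(1+z^2))^{\nu_0-\frac{n+1}{2}}$ and applying Fubini produces a double integral whose inner integral over $\Sym_n(\R)$ is independent of $y\in\Stiefelnd$. Indeed, the change of variable $z\mapsto h z \transpose{h}$ for $h\in\OO(n)$ preserves $dz$, $|\det z|$, and $\det(1+z^2)$, while sending $(z,y)$ inside $K^{\alpha,\beta}$ to $(z,hy)$; together with the transitivity of $\OO(n)$ on $\Stiefelnd$ this reduces the estimate to the finiteness of
\begin{equation*}
\int_{\Sym_n(\R)} |K^{\alpha,\beta}(z,y_0)|^2 \, (\det(1+z^2))^{\nu_0-\frac{n+1}{2}} \, dz
\end{equation*}
for the fixed frame $y_0=\transpose{(1_d,0)}$.

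To analyse this integral I will use the alternative polynomial form of the kernel,
\begin{equation*}
|K^{\alpha,\beta}(z,y)|^2 = |\det z|^{2\alpha_0-2\beta_0} \Bigl|\det\mattwo{z}{y}{\transpose{y}}{0}\Bigr|^{2\beta_0},
\end{equation*}
and pass to polar coordinates $z = r\Theta$ with $\trace(\Theta^2)=1$. Separating radial and angular parts exactly as in the proof of Theorem~\ref{thm:conv-integral-operator-P}, the radial factor behaves like $r^{-n\nu_0-d\mu_0+\frac{n(n+1)}{2}-1}$ near $r=0$ and (using the generic asymptotic $\det(1+r^2\Theta^2)\sim r^{2n}(\det\Theta)^2$) like $r^{n\nu_0-d\mu_0-\frac{n(n+1)}{2}-1}$ as $r\to\infty$; the two inequalities in \eqref{thm-eq:Q-ineq-conv-radial} precisely secure convergence at each end.

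The principal obstacle is the angular integrand, which would be singular along the rank-deficient locus $\{\det\Theta=0\}$ in the original form of the kernel. Rewriting in polynomial form is what dissolves this difficulty: the spherical hypotheses \eqref{thm-eq:Q-ineq-conv-spherical} translate into $2\alpha_0-2\beta_0 = n-d-\nu_0-\mu_0 \ge 0$ and $2\beta_0 = \mu_0-n \ge 0$, so both factors in the integrand become polynomial---hence continuous---functions of $\Theta$ on the compact sphere, and the angular integral is finite by continuity alone. Once this global $L^2$ bound is established on a dense subspace of $\HilbLmu$ (for example continuous $F$), one extends $\IQtoP$ by continuity to a bounded operator $\HilbLmu\to\HilbGnu$; the $L$-equivariance then passes from the formal computation to the extension.
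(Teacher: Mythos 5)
Your argument follows the paper's proof essentially step for step: Cauchy--Schwarz over the compact Stiefel manifold, the polynomial form of the kernel (granted by \eqref{thm-eq:Q-ineq-conv-spherical}, which forces $\alpha_0-\beta_0\ge 0$ and $\beta_0\ge 0$) so that the angular integrand is continuous on the compact sphere $\Omega^{\Theta}$, and the polar decomposition $z=r\Theta$ with the radial exponents at $r\downarrow 0$ and $r\uparrow\infty$ controlled by \eqref{thm-eq:Q-ineq-conv-radial}. The one genuine addition you make is the $\OO(n)$-invariance observation (using $K^{\alpha,\beta}(hz\transpose{h},y)=K^{\alpha,\beta}(z,\transpose{h}y)$ and the transitivity of $\OO(n)$ on $\Stiefelnd$) that the inner $z$-integral is literally independent of $y\in\Stiefelnd$, reducing the double integral to a single frame $y_0$; this tidies up a point the paper handles only implicitly via compactness of $\Stiefelnd$, and your closing density-and-extension remark likewise makes explicit a passage the paper leaves tacit.
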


Two remarks are in order.  
First, 
the inequalities 
\eqref{thm-eq:Q-ineq-conv-radial} and 
\eqref{thm-eq:Q-ineq-conv-spherical} 
is ``opposite'' to the inequalities in Theorem~\ref{thm:conv-integral-operator-P}.  
So $ (\nu, \mu) $ does not share a common region for convergence.  
Second, the condition \eqref{thm-eq:Q-ineq-conv-spherical} in fact \emph{implies} 
\eqref{thm-eq:Q-ineq-conv-radial}.  
However, we suspect the inequality \eqref{thm-eq:Q-ineq-conv-spherical} is too strong 
to ensure the convergence.  So we leave them as they are.

\medskip

Now let us prove the theorem.  
For brevity, we denote $ \nu_0, \mu_0 $ by $ \nu, \mu $ in the following.

Since 
$ \alpha - \beta = - (\nu + d)/2 - (\mu - n)/2 = \frac{1}{2} (n - d - (\nu + \mu)) \geq 0 $ and 
$ \beta = (\mu - n)/2 \geq 0 $, 
the kernel function $ K^{\alpha,\beta}(z, y) $ is continuous.  
So the integral \eqref{eq:true-def-Q} converges.  
Let us check $ \IQtoP F(z) \in \HilbGnu $ for $ F \in \HilbLmu $.  
By Cauchy-Schwarz inequality, we get 
\begin{align*}
|\IQtoP F(z)|^2 
\leq 
\int_{\Stiefelnd} |F(y)|^2 d\sigma(y) 
\int_{\Stiefelnd} |K^{\alpha, \beta}(z, y)|^2 d\sigma(y) 
=
\normLmu{F}^2 
\int_{\Stiefelnd} |K^{\alpha, \beta}(z, y)|^2 d\sigma(y) .
\end{align*}
Thus 
\begin{align*}
\normGnu{\IQtoP F}^2 
&= \int_{\Sym_n(\R)} |\IQtoP F(z)|^2 (\det(1 + z^2))^{\nu - \frac{n + 1}{2}} dz 
\\
&\leq 
\normLmu{F}^2 
\int_{\Stiefelnd} \int_{\Sym_n(\R)} |K^{\alpha, \beta}(z, y)|^2 (\det(1 + z^2))^{\nu - \frac{n + 1}{2}} dz d\sigma(y) 
\end{align*}
Since $ \alpha = - (\nu + d)/2 $ and 
$ \beta = (\mu - n)/2 $, 
the integral of square of the kernel is 
\begin{align}
\int_{\Sym_n(\R)} & |K^{\alpha, \beta}(z, y)|^2 (\det(1 + z^2))^{\nu - \frac{n + 1}{2}} dz
\notag
\\
&=
\int_{\Sym_n(\R)} |\det z|^{-(\nu + \mu) + n - d} 
\bigl|\det \mattwo{z}{y}{\transpose{y}}{0} \bigr|^{\mu -n} 
(\det(1 + z^2))^{\nu - \frac{n + 1}{2}} dz .
\label{eq:square-norm-of-K-for-Q}
\end{align}
As in the proof of Theorem~\ref{thm:conv-integral-operator-P},
we use polar coordinate $ z = r \Theta $.  
Namely, we put $ r := \sqrt{\trace z^2} $ and write 
$ z = r \Theta $.  
If we put 
$ \Omega^{\Theta} = \{ \Theta \in \Sym_n(\R) \mid \trace (\Theta^2) = 1 \} $, 
it is compact and $ dz = r^{\frac{n (n + 1)}{2} - 1} dr d\Theta $.
Thus we get 
\begin{align*}
&
\int_{\Sym_n(\R)} 
|\det z|^{-(\nu + \mu) + n - d} 
\bigl|\det \mattwo{z}{y}{\transpose{y}}{0} \bigr|^{\mu -n} 
(\det(1 + z^2))^{\nu - \frac{n + 1}{2}} dz 
\\
&=
\int_{\Omega^{\Theta}} |\det \Theta|^{-(\nu + \mu) + n - d} 
\bigl|\det \mattwo{\Theta}{y}{\transpose{y}}{0} \bigr|^{\mu - n} d\Theta 
\\
&\qquad\qquad \times
\int_0^{\infty}
r^{n (-(\nu + \mu) + n -d) + (n - d)(\mu - n)} 
(\det(1 + r^2 \Theta^2))^{\nu - \frac{n + 1}{2}} r^{\frac{n (n + 1)}{2} - 1} dr
\\ 
&=
\int_{\Omega^{\Theta}} |\det \Theta|^{-(\nu + \mu) + n - d} 
\bigl|\det \mattwo{\Theta}{y}{\transpose{y}}{0} \bigr|^{\mu - n} d\Theta 
\\
&\qquad\qquad \times
\int_0^{\infty}
r^{- (n \nu + d \mu) + \frac{n (n + 1)}{2} - 1} 
(\det(1 + r^2 \Theta^2))^{\nu - \frac{n + 1}{2}} dr .
\end{align*}
Since the integrand in the first integral over $ \Omega^{\Theta} $ is continuous and hence converges.  
For the second, we separate it according as $ r \downarrow 0 $ or $ r \uparrow \infty $ 
as in the proof of Theorem~\ref{thm:conv-integral-operator-P}.  

When $ r $ is near zero, the integral converges 
if $ \int_0^1 r^{- (n \nu + d \mu) + \frac{n (n + 1)}{2} - 1} dr $ converges.
The convergence follows from The first inequality in \eqref{thm-eq:Q-ineq-conv-radial}.  
When $ r $ is large, 
the integral converges 
if $ \int_1^{\infty} r^{- (n \nu + d \mu) + \frac{n (n + 1)}{2} - 1 + 2 n (\nu - \frac{n + 1}{2})} dr $ converges.
We use the second inequality in \eqref{thm-eq:Q-ineq-conv-radial} for the convergence.  

This completes the proof of Theorem~\ref{theorem:intertwiner-Q-to-P}.

\subsection{Finite dimensional representations}

If $ \alpha, \beta \in \Z $, 
we can naturally consider an algebraic kernel function  
\begin{equation*}
K^{\alpha, \beta}(z, y) = \det(z)^{\alpha} \det (\transpose{y} z^{-1} y)^{\beta} 
\qquad
((z, y) \in \Sym_n(\R) \times \Mat_{n,d}(\R))
\end{equation*}
without taking absolute value.  
By abuse of notation, we use the same symbol as before.  
Similarly we also consider algebraic characters 
\begin{align*}
\chiPS(p) = \det(a)^{\nu} \qquad 
( p = \mattwo{a}{\ast}{0}{\transpose{a}^{-1}} \in P_S)
\intertext{ and } 
\chiQ(q) = \det (k)^{\mu} \qquad 
( q = \mattwo{k}{\ast}{0}{k'} \in Q ) 
\end{align*}
if $ \mu $ and $ \nu $ are integers.  
In this setting the results in the above subsections are also valid.

We make use of Corollary~\ref{cor:finite-dim-rep-generated-by-kernel} to deduce the facts on the image and kernels of integral kernel operators 
considered above.

\begin{theorem}\label{theorem:fin-dim-subrep}
For nonnegative integers $ m_1 $ and $ m_2 $, 
we put 
$ \alpha = m_1 + m_2, \beta = m_2 $ and 
define $ K^{\alpha, \beta}(z, y) $ as above.
\begin{thmenumerate}
\item\label{theorem:fin-dim-subrep:item:Q}
Put 
$ \nu = -2 (m_1 + m_2) - d $ and $ \mu = 2 m_2 + n $, 
and define the characters $ \chiPS $ and $ \chiQ $ as above.  
Then $ \Ind_Q^L \chiQ $ contains the finite dimensional representation 
$ V^{(n)}(2 m_1 \varpi_n + 2 m_2 \varpi_{n - d})^{\ast} $ as an irreducible quotient.  
On the other hand, the representation $ \Ind_{P_S}^G \chiPS $ contains 
the same finite dimensional representation of $ L $ as a subrepresentation, 
and $ \IQtoP $ intertwines these two representations.  
This subrepresentation is the same for any $ p $ and $ q $.
\item\label{theorem:fin-dim-subrep:item:P}
Assume $ 2 m_1 \geq n + 1 $ and 
put 
$ \nu = 2 (m_1 + m_2) $ and $ \mu = - 2 m_2 $.  
Define the characters $ \chiPS $ and $ \chiQ $ as above.  
Then $ \Ind_{P_S}^G \chiPS $ contains the finite dimensional representation 
$ V^{(n)}(2 m_2 \varpi_d)^{\ast} $ of $ L = \GL_n(\R) $ as an irreducible quotient.  
On the other hand $ \Ind_Q^L \chiQ $ contains the same finite dimensional representation as a subrepresentation, 
and $ \IPtoQ $ intertwines these two representations.  
The intertwiners depend on $ p $ and $ q $, so 
there are at least $ (n + 1) $ different irreducible quotients which is isomorphic to 
$ V^{(n)}(2 m_2 \varpi_d)^{\ast} $, 
while the image in $ \Ind_Q^L \chiQ $ is the same.
\end{thmenumerate}
\end{theorem}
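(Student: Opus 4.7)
The strategy is to read off both halves of each assertion as natural spans of the kernel $K^{\alpha,\beta}(z,y)$ via Corollary~\ref{cor:finite-dim-rep-generated-by-kernel}. The first observation is that $K^{\alpha,\beta}$ is genuinely polynomial in both variables when $\alpha\ge\beta\ge 0$ are integers: setting $D(z,y):=\det\mattwo{z}{y}{\transpose{y}}{0}$, the cofactor identity $(\det z)\det(\transpose{y}z^{-1}y)=(-1)^{d}D(z,y)$ gives
\[
K^{\alpha,\beta}(z,y)=(-1)^{d\beta}(\det z)^{\alpha-\beta}D(z,y)^{\beta},
\]
a polynomial of bidegree $(n(\alpha-\beta)+(n-d)\beta,\,2d\beta)$. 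The numerical inequalities of Theorems~\ref{thm:conv-integral-operator-P} and \ref{theorem:intertwiner-Q-to-P} are then automatic in Part (\ref{theorem:fin-dim-subrep:item:Q}) (using only $m_{1},m_{2}\ge 0$) and implied by $2m_{1}\ge n+1$ in Part (\ref{theorem:fin-dim-subrep:item:P}), so $\IQtoP$ and $\IPtoQ$ are well-defined $L$-equivariant operators between the relevant Hilbert spaces with the stated parameters.

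For Part (\ref{theorem:fin-dim-subrep:item:Q}), the choice $\alpha=m_{1}+m_{2}$, $\beta=m_{2}$ gives $\nu=-2\alpha-d$ and $\mu=2\beta+n$, matching the equivariance derivation preceding Theorem~\ref{theorem:intertwiner-Q-to-P}. For each fixed $z$, $\IQtoP F(z)$ is linear in $F$ paired against the polynomial $y\mapsto K^{\alpha,\beta}(z,y)$; since $\Stiefelnd$ is Zariski-dense in $\Mat_{n,d}(\C)$, the image of $\IQtoP$ lies in $\spanc{K^{\alpha,\beta}(\cdot,y):y\in\Mat_{n,d}}$, which by Corollary~\ref{cor:finite-dim-rep-generated-by-kernel}(2) is the irreducible $L$-module $V^{(n)}(2m_{1}\varpi_{n}+2m_{2}\varpi_{n-d})^{\ast}$. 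Surjectivity then follows from Schur's lemma: $\IQtoP$ is manifestly nonzero (a bump function on $\Stiefelnd$ concentrated where $K^{\alpha,\beta}(z_{0},\cdot)$ does not vanish suffices), and the target is irreducible. This simultaneously realizes $V^{(n)}(2m_{1}\varpi_{n}+2m_{2}\varpi_{n-d})^{\ast}$ as an $L$-subrepresentation of $\Ind_{P_{S}}^{G}\chiPS$ and as the irreducible quotient $\Ind_{Q}^{L}\chiQ/\ker(\IQtoP)$.

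For Part (\ref{theorem:fin-dim-subrep:item:P}) the roles of $z$ and $y$ swap: the image of $\IPtoQ F(y)=\int_{\Omega(p,q)}F(z)K^{\alpha,\beta}(z,y)d\omega(z)$ lies in $\spanc{K^{\alpha,\beta}(z,\cdot):z\in\Omega(p,q)}$, and since each open $L$-orbit $\Omega(p,q)\subset\Sym_{n}(\R)$ is Zariski-dense in $\Sym_{n}(\C)$ this agrees with the full span $V^{(n)}(2m_{2}\varpi_{d})^{\ast}\otimes V^{(d)}(2m_{2}\varpi_{d})^{\ast}$ from Corollary~\ref{cor:finite-dim-rep-generated-by-kernel}(1). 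The right $\GL_{d}$-factor is absorbed by the $\chiQ$-equivariance $F(yk)=|\det k|^{2m_{2}}F(y)$ (since $\mu=-2m_{2}$), so the image descends to the irreducible $L$-submodule $V^{(n)}(2m_{2}\varpi_{d})^{\ast}\subset\Ind_{Q}^{L}\chiQ$; Schur plus a nonvanishing check again yield surjectivity. For the $(n+1)$ distinct quotients, invoke the $L$-stable decomposition $\HilbGnu=\bigoplus_{p+q=n}\HilbGnu(p,q)$: the operator $\IPtoQ^{(p,q)}$ vanishes on every summand $\HilbGnu(p',q')$ with $(p',q')\ne(p,q)$, so the kernels $\ker(\IPtoQ^{(p,q)})$ are pairwise distinct, producing $(n+1)$ inequivalent surjections of $\Ind_{P_{S}}^{G}\chiPS$ onto the same irreducible target $V^{(n)}(2m_{2}\varpi_{d})^{\ast}$.

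The main technical obstacle is the passage from the real integration domains ($\Stiefelnd$ compact, or a single open real orbit $\Omega(p,q)$) to the complex algebraic spans appearing in Corollary~\ref{cor:finite-dim-rep-generated-by-kernel}: one must be sure the real manifold is rich enough that the image is not trapped in a proper $L$-submodule. Zariski density of these real submanifolds in their ambient complex vector spaces, combined with the polynomial nature of $K^{\alpha,\beta}$, rescues the argument, but writing out the nonvanishing verification carefully (especially to justify the distinctness of the $(n+1)$ kernels in Part (\ref{theorem:fin-dim-subrep:item:P})) is the only substantive content beyond bookkeeping.
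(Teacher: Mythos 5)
Your proof follows the paper's own (very terse) proof, which states the theorem as an immediate consequence of Corollary~\ref{cor:finite-dim-rep-generated-by-kernel} together with Theorems~\ref{thm:conv-integral-operator-P} and~\ref{theorem:intertwiner-Q-to-P}, plus the remark that $2m_1\ge n+1$ ensures convergence in part~(\ref{theorem:fin-dim-subrep:item:P}); you are supplying the bookkeeping the paper leaves implicit, and the parameter checks are correct.

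One slip to fix: $\Stiefelnd$ is \emph{not} Zariski-dense in $\Mat_{n,d}(\C)$ --- it sits in the proper subvariety $\transpose{y}y=1_d$ of strictly smaller dimension. Fortunately the claim this is meant to justify (that the image of $\IQtoP$ lies in $\spanc{K^{\alpha,\beta}(\cdot,y):y\in\Mat_{n,d}}$) is trivially true without any density, since the span over $\Stiefelnd$ is a subspace of the full span. What actually forces surjectivity is, as you then observe, the $L$-equivariance of $\IQtoP$: its image is an $L$-stable nonzero subspace of the finite-dimensional irreducible module identified in Corollary~\ref{cor:finite-dim-rep-generated-by-kernel}(2), hence is everything. (If you want a density statement, the correct one is that $\Stiefelnd\cdot\GL_d(\R)=\regMat_{n,d}(\R)$ is Zariski-dense, together with the $\GL_d$-covariance of the kernel.) Relatedly, ``Schur's lemma'' is a slight misnomer here since the source $\HilbLmu$ is not irreducible; the relevant fact is simply that a nonzero $L$-submodule of an irreducible module must be the whole module. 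The Zariski-density argument you give for $\Omega(p,q)\subset\Sym_n(\C)$ in part~(\ref{theorem:fin-dim-subrep:item:P}) is correct, and the distinctness of the $(n+1)$ kernels via the summand decomposition of $\HilbGnu$ is the right way to see there are $(n+1)$ different quotients.
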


\begin{proof}
This follows immediately from 
Corollary~\ref{cor:finite-dim-rep-generated-by-kernel} 
and Theorems~\ref{thm:conv-integral-operator-P}
and \ref{theorem:intertwiner-Q-to-P}.  
Note that $ 2 m_1 \geq n + 1 $ is required for the convergence of the integral operator.
\end{proof}

%%%
\newcommand{\FKmbf}{\boldsymbol{a}}
\newcommand{\FKm}{\alpha}
%%%

The above result illustrates how knowledge about the geometry of a double flag variety and
associated relative invariants may give information about the structure of parabolically
induced representations, and in particular about some branching laws. Let us explain, that
the branching laws in the above Theorem are consistent with other approaches to the
structure of $ \Ind_{P_S}^G \chiPS$ in Theorem~\ref{theorem:fin-dim-subrep}~\eqref{theorem:fin-dim-subrep:item:Q}.

Let us in the
following remind about the connection between this induced representation, living
on the Shilov boundary $S$ of the Hermitian symmetric space $G/K$, and the structure
of holomorphic line bundles on this symmetric space. 
Let $ \lie{g} = \lie{k} + \lie{p} $ be a Cartan decomposition, and 
$ \lie{p}_{\C} = \lie{p}^+ \oplus \lie{p}^- $ be a decomposition into irreducible representations of $ K $.  
For holomorphic polynomials on the
symmetric space we have the Schmid decomposition (see \cite{Faraut.Koranyi.1994}, XI.2.4)
of the space of polynomials
$$\mathcal P(\lie{p}^+) = \oplus_{\FKmbf} \mathcal P_{\FKmbf}(\lie{p}^+)$$
and the sum is over multi-indices ${\FKmbf}$ of integers with $\FKm_1 \geq \FKm_2 \geq \dots \geq \FKm_n \geq 0$, 
labeling (strictly speaking, here one chooses an
order so that these are the negative of)
$K$-highest weights $\FKm_1 \gamma_1 + \dots + \FKm_n \gamma_n$ with $\gamma_1, \dots, \gamma_n$
Harish-Chandra strongly orthogonal non-compact roots.    
Now by restricting polynomials to the Shilov boundary $S$ we obtain an imbedding
of the Harish-Chandra module corresponding to holomorphic sections of the
line bundle with parameter $\nu$ in the parabolically induced representation on $S$
with the same parameter. 
For concreteness, recall: For $ f \in \CinfInd_{P_S}^G \chiPS $, 
the action of $ \pi_{\nu}^G(g) $ on $ f $ is given by
\begin{equation*}
\pi_{\nu}^G(g) f(z) = |\det(a + z c)|^{- \nu} f(\lfa{g^{-1}}{z}) 
\qquad
(g = \mattwo{a}{b}{c}{d} \in G, \;\; z \in \Sym_n(\R)). 
\end{equation*}
When $\nu$ is an even integer, this is exactly the action in the (trivialized)
holomorphic bundle, now valid for holomorphic functions of $z \in \Sym_n(\C)$.
So if we can find parameters with a finite-dimensional
invariant subspace in this Harish-Chandra module, then the same module will be
an invariant subspace in $ \Ind_{P_S}^G \chiPS$.

Recall that the
maximal compact subgroup $K$ of $G$ has a complexification isomorphic to that of $L$,
and the $G/K$ is a Hermitian symmetric space of tube type. Indeed, inside
the complexified group $G_{\mathbb C}$ the two complexifications
are conjugate. Hence if we consider a finite-dimensional representation of
$G$ (or $G_{\mathbb C}$), then the branching law for each of these subgroups
will be isomorphic. 

For Hermitian symmetric spaces of tube type in general also recall
the reproducing kernel (as in \cite{Faraut.Koranyi.1994}, especially Theorem~XIII.2.4 and the
notation there) for holomorphic sections of line bundles on $G/K$,
\begin{equation}\label{eq:reproducing-kernel}
h(z,w)^{-\nu} = \sum_{\FKmbf} (\nu)_{\FKmbf} K^{\FKmbf}(z,w)
\end{equation}
and the sum is again over multi-indices ${\FKmbf}$ of integers with $\FKm_1 \geq \FKm_2 \geq \dots \geq \FKm_n \geq 0$.
Here the functions $K^{\FKmbf}(z,w)$ are (suitably normalized) reproducing kernels
of the $K$-representations $\mathcal P_{\FKmbf}(\lie{p}^+)$. 
The Pochhammer symbol is in terms of the scalar symbol in our case here
\begin{align*}
(\nu)_{\FKmbf} &= (\nu)_{\FKm_1} (\nu - 1/2)_{\FKm_2} \cdots (\nu - (n-1)/2)_{\FKm_n} 
= \prod_{i = 1}^n (\nu - (i - 1)/2)_{\FKm_i}, \quad \text{ and } \quad \\
(x)_k &= x (x + 1) \cdots (x + k - 1) = \dfrac{\Gamma(x + k)}{\Gamma(x)}.
\end{align*}
   
Recall that for positive-definiteness of the above kernel, $\nu$ must belong to the
so-called Wallach set; this means that the corresponding Harish-Chandra module is 
unitary and corresponds to a unitary reproducing-kernel representation of $G$ (or
a double covering of $G$). Here the Wallach set is 
$${\mathcal W} = \{0, \frac{1}{2}, \dots, \frac{n-1}{2} \} \cup (\frac{n-1}{2}, \infty )$$
as in \cite{Faraut.Koranyi.1994}, XIII.2.7.

On the other hand, 
if $\nu$ is a negative integer, 
the Pochhammer symbols $(\nu)_{\FKmbf}$ vanishes when $ \FKm_1 > -\nu $.  
So this gives a finite sum in the formula~\eqref{eq:reproducing-kernel} for the reproducing kernel 
corresponding to a finite-dimensional representation of $G$, and 
$\FKmbf$ labels the $K$-types occurring here as precisely those with $-\nu \geq \FKm_1$. 
By taking boundary values 
we obtain an imbedding of the $K$-finite holomorphic sections on $G/K$ to sections of the line bundle
on $G/P_S$. Recalling that for our $G$ the Harish-Chandra strongly orthogonal non-compact roots
are $2e_j$ in terms of the usual basis $e_j$, this means that the $L$-types in 
Theorem~\ref{theorem:fin-dim-subrep}~\eqref{theorem:fin-dim-subrep:item:Q} indeed occur.
Namely, we may identify the parameters by the equation   
\begin{equation*}
2 m_1 \varpi_n + 2 m_2 \varpi_{n - d} = 2(m_1 + m_2, \dots , m_1 + m_2, m_1, \dots , m_1)
\end{equation*}
with the right-hand side of the form of a multi-index $\FKmbf$ satisfying
$- \nu = 2 (m_1 + m_2) + d \geq \FKm_1$ as required above.   

Thus we have seen, that there is consistency with the results about branching laws 
from $G$ to $K$ coming
from considering finite-dimensional continuations of holomorphic discrete series
representations, and on the other hand those branching laws from $G$ to $L$
coming from our study of relative invariants and intertwining operators from
$ \Ind_{P_S}^G \chiPS $ to $ \Ind_Q^L \chiQ $.

\renewcommand{\MR}[1]{}

%%\bibliographystyle{amsalpha}
%%\bibliography{bib_dfv}

\begin{thebibliography}{HNOO13}

\bibitem[BJ06]{Borel.Ji.2006}
Armand Borel and Lizhen Ji, \emph{Compactifications of symmetric and locally
  symmetric spaces}, Mathematics: Theory \& Applications, Birkh\"auser Boston,
  Inc., Boston, MA, 2006. \MR{2189882}

\bibitem[BSKZ14]{Said.Koufany.Genkai.2014}
Salem Ben~Said, Khalid Koufany, and Genkai Zhang, \emph{Invariant trilinear
  forms on spherical principal series of real rank one semisimple {L}ie
  groups}, Internat. J. Math. \textbf{25} (2014), no.~3, 1450017, 35.
  \MR{3189776}

\bibitem[CK{\O}P11]{CKOP.2011}
Jean-Louis Clerc, Toshiyuki Kobayashi, Bent {\O}rsted, and Michael Pevzner,
  \emph{Generalized {B}ernstein-{R}eznikov integrals}, Math. Ann. \textbf{349}
  (2011), no.~2, 395--431. \MR{2753827}

\bibitem[FGT09]{FGT.2009}
Michael Finkelberg, Victor Ginzburg, and Roman Travkin, \emph{Mirabolic affine
  {G}rassmannian and character sheaves}, Selecta Math. (N.S.) \textbf{14}
  (2009), no.~3-4, 607--628. \MR{MR2511193}

\bibitem[FK94]{Faraut.Koranyi.1994}
Jacques Faraut and Adam Kor\'anyi, \emph{Analysis on symmetric cones}, Oxford
  Mathematical Monographs, The Clarendon Press, Oxford University Press, New
  York, 1994, Oxford Science Publications. \MR{1446489}

\bibitem[FN16]{Fresse.Nishiyama.2016}
Lucas Fresse and Kyo Nishiyama, \emph{On the exotic {G}rassmannian and its
  nilpotent variety}, Represent. Theory \textbf{20} (2016), 451--481, Paging
  previously given as: 1--31. \MR{3576071}

\bibitem[HL99]{Howe.Lee.1999}
Roger Howe and Soo~Teck Lee, \emph{Degenerate principal series representations
  of {${\rm GL}_n(\bold C)$} and {${\rm GL}_n(\bold R)$}}, J. Funct. Anal.
  \textbf{166} (1999), no.~2, 244--309. \MR{1707754}

\bibitem[HNOO13]{HNOO.2013}
Xuhua He, Kyo Nishiyama, Hiroyuki Ochiai, and Yoshiki Oshima, \emph{On orbits
  in double flag varieties for symmetric pairs}, Transform. Groups \textbf{18}
  (2013), no.~4, 1091--1136. \MR{3127988}

\bibitem[HT12]{Henderson.Trapa.2012}
Anthony Henderson and Peter~E. Trapa, \emph{The exotic {R}obinson-{S}chensted
  correspondence}, J. Algebra \textbf{370} (2012), 32--45. \MR{2966826}

\bibitem[Kim87]{Kimelfeld.1987}
Boris Kimelfeld, \emph{Homogeneous domains on flag manifolds}, J. Math. Anal.
  Appl. \textbf{121} (1987), no.~2, 506--588. \MR{872237 (88h:32027)}

\bibitem[KK79]{Kashiwara.Kawai.1979}
Masaki Kashiwara and Takahiro Kawai, \emph{On holonomic systems for {$\Pi
  ^{N}_{l=1}(f_{l}+(\surd{-1}) 0)^{\lambda _{l}}$}}, Publ. Res. Inst. Math.
  Sci. \textbf{15} (1979), no.~2, 551--575. \MR{555665}

\bibitem[KM14]{Kobayashi.Matsuki.2014}
T.~Kobayashi and T.~Matsuki, \emph{Classification of finite-multiplicity
  symmetric pairs}, Transform. Groups \textbf{19} (2014), no.~2, 457--493.
  \MR{3200432}

\bibitem[Kna86]{Knapp.redbook}
Anthony~W. Knapp, \emph{Representation theory of semisimple groups}, Princeton
  Mathematical Series, vol.~36, Princeton University Press, Princeton, NJ,
  1986, An overview based on examples. \MR{855239}

\bibitem[KO13]{Kobayashi.T.Oshima.2013}
Toshiyuki Kobayashi and Toshio Oshima, \emph{Finite multiplicity theorems for
  induction and restriction}, Adv. Math. \textbf{248} (2013), 921--944.
  \MR{3107532}

\bibitem[Kob05]{Kobayashi.2005}
Toshiyuki Kobayashi, \emph{Multiplicity-free representations and visible
  actions on complex manifolds}, Publ. Res. Inst. Math. Sci. \textbf{41}
  (2005), no.~3, 497--549. \MR{MR2153533 (2006e:22017)}

\bibitem[K{\O}P11]{Kobayashi.Orsted.Pevzner.2011}
Toshiyuki Kobayashi, Bent {\O}rsted, and Michael Pevzner, \emph{Geometric
  analysis on small unitary representations of {${\rm GL}(N,\Bbb R)$}}, J.
  Funct. Anal. \textbf{260} (2011), no.~6, 1682--1720. \MR{2754889}

\bibitem[KS94]{Kashiwara.Schapira.1994}
Masaki Kashiwara and Pierre Schapira, \emph{Sheaves on manifolds}, Grundlehren
  der Mathematischen Wissenschaften [Fundamental Principles of Mathematical
  Sciences], vol. 292, Springer-Verlag, Berlin, 1994, With a chapter in French
  by Christian Houzel, Corrected reprint of the 1990 original. \MR{1299726}

\bibitem[KS13]{Kroetz.Schlichtkrull.2013}
B.~{Kroetz} and H.~{Schlichtkrull}, \emph{{Finite orbit decomposition of real
  flag manifolds}}, ArXiv e-prints (2013).

\bibitem[KS15]{Kobayashi.Speh.2015}
Toshiyuki Kobayashi and Birgit Speh, \emph{Symmetry breaking for
  representations of rank one orthogonal groups}, Mem. Amer. Math. Soc.
  \textbf{238} (2015), no.~1126, v+110. \MR{3400768}

\bibitem[Lee96]{Lee.1996}
Soo~Teck Lee, \emph{Degenerate principal series representations of {${\rm
  Sp}(2n,{\bf R})$}}, Compositio Math. \textbf{103} (1996), no.~2, 123--151.
  \MR{MR1411569 (97i:22006)}

\bibitem[Mat79]{Matsuki.1979}
Toshihiko Matsuki, \emph{The orbits of affine symmetric spaces under the action
  of minimal parabolic subgroups}, J. Math. Soc. Japan \textbf{31} (1979),
  no.~2, 331--357. \MR{MR527548 (81a:53049)}

\bibitem[Mat88]{Matsuki.1988}
\bysame, \emph{Closure relations for orbits on affine symmetric spaces under
  the action of parabolic subgroups. {I}ntersections of associated orbits},
  Hiroshima Math. J. \textbf{18} (1988), no.~1, 59--67. \MR{935882}

\bibitem[Mat91]{Matsuki.1991}
\bysame, \emph{Orbits on flag manifolds}, Proceedings of the {I}nternational
  {C}ongress of {M}athematicians, {V}ol.\ {I}, {II} ({K}yoto, 1990), Math. Soc.
  Japan, Tokyo, 1991, pp.~807--813. \MR{1159267 (93e:22029)}

\bibitem[M{\O}O16]{Moellers.Orsted.Oshima.2016}
Jan M\"ollers, Bent {\O}rsted, and Yoshiki Oshima, \emph{Knapp-{S}tein type
  intertwining operators for symmetric pairs}, Adv. Math. \textbf{294} (2016),
  256--306. \MR{3479564}

\bibitem[NO11]{NO.2011}
Kyo Nishiyama and Hiroyuki Ochiai, \emph{Double flag varieties for a symmetric
  pair and finiteness of orbits}, J. Lie Theory \textbf{21} (2011), no.~1,
  79--99. \MR{2797821}

\bibitem[Ros79]{Rossmann.1979}
W.~Rossmann, \emph{The structure of semisimple symmetric spaces}, Canad. J.
  Math. \textbf{31} (1979), no.~1, 157--180. \MR{MR518716 (81i:53042)}

\bibitem[Tra09]{Travkin.2009}
Roman Travkin, \emph{Mirabolic {R}obinson-{S}chensted-{K}nuth correspondence},
  Selecta Math. (N.S.) \textbf{14} (2009), no.~3-4, 727--758. \MR{MR2511197}

\bibitem[Wol74]{Wolf.1974}
Joseph~A. Wolf, \emph{Finiteness of orbit structure for real flag manifolds},
  Geometriae Dedicata \textbf{3} (1974), 377--384. \MR{0364689 (51 \#943)}

\bibitem[Zha09]{Genkai.2009}
Genkai Zhang, \emph{Radon, cosine and sine transforms on {G}rassmannian
  manifolds}, Int. Math. Res. Not. IMRN (2009), no.~10, 1743--1772.
  \MR{2505576}

\end{thebibliography}

\def\cftil#1{\ifmmode\setbox7\hbox{$\accent"5E#1$}\else
  \setbox7\hbox{\accent"5E#1}\penalty 10000\relax\fi\raise 1\ht7
  \hbox{\lower1.15ex\hbox to 1\wd7{\hss\accent"7E\hss}}\penalty 10000
  \hskip-1\wd7\penalty 10000\box7} \def\cprime{$'$} \def\cprime{$'$}
  \def\Dbar{\leavevmode\lower.6ex\hbox to 0pt{\hskip-.23ex \accent"16\hss}D}
\providecommand{\bysame}{\leavevmode\hbox to3em{\hrulefill}\thinspace}
\providecommand{\MR}{\relax\ifhmode\unskip\space\fi MR }
% \MRhref is called by the amsart/book/proc definition of \MR.
\providecommand{\MRhref}[2]{%
  \href{http://www.ams.org/mathscinet-getitem?mr=#1}{#2}
}
\providecommand{\href}[2]{#2}

\end{document}